\newcommand{\C}{\mathbb{C}}
\newcommand{\g}{\mathfrak{g}}
\newcommand{\U}{\mathcal{U}}
\newcommand{\Walg}{\mathcal{W}}
\newcommand{\J}{\mathcal{J}}
\newcommand{\Prim}{\operatorname{Prim}}
\newcommand{\dcell}{{\bf c}}
\newcommand{\Irr}{\operatorname{Irr}}
\newcommand{\Spr}{\operatorname{Spr}}
\newcommand{\Orb}{\mathbb{O}}
\newcommand{\VA}{\operatorname{V}}
\newcommand{\gr}{\operatorname{gr}}
\newcommand{\Hom}{\operatorname{Hom}}
\newcommand{\h}{\mathfrak{h}}
\newcommand{\Z}{\mathbb{Z}}
\newcommand{\Jalg}{\mathsf{J}}
\newcommand{\F}{\mathbb{F}}
\newcommand{\B}{\mathcal{B}}
\newcommand{\I}{\mathcal{I}}
\newcommand{\HC}{\operatorname{HC}}
\newcommand{\Coh}{\operatorname{Coh}}
\newcommand{\slf}{\mathfrak{sl}}
\newcommand{\Q}{\mathbb{Q}}
\newcommand{\M}{\mathcal{M}}
\newcommand{\LL}{\mathcal{L}}
\newcommand{\A}{\mathcal{A}}
\newcommand{\RE}{\mathbb{R}}
\newcommand{\Ce}{\mathbb{C}}
\renewcommand{\t}{{\mathfrak{t}} }
\newtheorem{Thm}{Theorem}[section]
\newtheorem{Prop}[Thm]{Proposition}
\newtheorem{Cor}[Thm]{Corollary}
\newtheorem{Lem}[Thm]{Lemma}
\theoremstyle{definition}
\newtheorem{Rem}[Thm]{Remark}
\title{On dimension growth of modular irreducible representations of semisimple Lie algebras}
\author{Roman Bezrukavnikov and Ivan Losev}
\address{R.B: Department of Mathematics, MIT, Cambridge MA 02139 USA }
\email{bezrukav@math.mit.edu}
\address{I.L.: Department
of Mathematics, Northeastern University, Boston MA 02115 USA \&
International  Laboratory of Representation theory and Mathematical Physics, NRU-HSE, Moscow, Russia}
\email{i.loseu@neu.edu}
\thanks{MSC 2010: 17B20, 17B35, 17B50}
\begin{document}
\begin{abstract}
In this paper we investigate the growth with respect to $p$ of dimensions of irreducible representations
of a semisimple Lie algebra $\g$ over $\overline{\mathbb{F}}_p$. More precisely, it is known that for $p\gg 0$,
the irreducibles with a regular rational central character $\lambda$ and $p$-character $\chi$ are indexed by
a certain canonical basis in the $K_0$ of the Springer fiber of $\chi$. This basis is independent of $p$. For a
basis element, the dimension of the corresponding module
is a polynomial in $p$. We show that the canonical basis is compatible with the two-sided
cell filtration for a parabolic subgroup in the affine Weyl group defined by $\lambda$.
We also explain how to read the degree of the dimension polynomial from a filtration component of the basis element.
We use these results to establish conjectures of the second author and
Ostrik on a classification of the finite dimensional irreducible representations of W-algebras, as well
as a strengthening of a result by the first author with Anno and Mirkovic on real variations of stabilities
for the derived category of the Springer resolution.
\end{abstract}
\maketitle
\markright{ON MODULAR IRREDUCIBLE REPRESENTATIONS OF SEMISIMPLE LIE ALGEBRAS}

\begin{center}{\it To the memory of Bertram Kostant.}\end{center}
\section{Introduction}
In this paper we study the representation theory of semisimple Lie algebras over algebraically closed
fields  of big positive characteristic. More precisely, let $G$ be a semisimple algebraic group (of
adjoint type) over $\C$ and $\g$ be its Lie algebra. Then $\g$ is defined over $\Z$ so for an algebraically
closed field $\F$ of characteristic $p$ we can define the form $\g_{\F}$ over $\F$. The universal enveloping algebra $U(\g_\F)$ is finite over its center, namely, we have a central algebra embedding $S(\g^{(1)}_{\F})\hookrightarrow U(\g_{\F}), x\mapsto x^p-x^{[p]}$, where the superscript (1) indicates the Frobenius twist and the superscript $[p]$ stands for the restricted $p$-th power map $\g_{\F}^{(1)}\rightarrow \g_\F$. The image is known as the {\it $p$-center}.
In particular, all irreducible representations of $\g_\F$ are finite dimensional. Below we will assume that
$p\gg 0$ (although some statements hold under weaker assumptions).

Let $\h$ denote a Cartan subalgebra of $\g$. We have an identification
$U(\g_{\F})^{G_{\F}}\xrightarrow{\sim}\F[\h^*]^W$ (the Harish-Chandra isomorphism), the
central subalgebra $U(\g_{\F})^{G_{\F}}\subset U(\g_{\F})$ is known as the {\it Harish-Chandra
center}. Fix $\lambda\in \h^*$ and consider the corresponding central reduction $\U_{\lambda,\F}$
of the algebra $U(\g_\F)$. Further, for $\chi\in \g_{\F}^{(1)*}$ we can consider the further central
reduction $\U^\chi_{\lambda,\F}$, this is a finite dimensional algebra. Obviously, every irreducible
representation of $U(\g_{\F})$ factors through exactly one irreducible quotient $U^{\chi}_{\lambda,\F}$
(some of these quotients are zero).

The study of the representation theory of the algebras $\U_{\lambda,\F}^\chi$ can be easily reduced to
the case when the element $\chi$ is nilpotent, see \cite[Theorem 2]{KW}. Here the algebra $\U_{\lambda,\F}^{\chi}$ is nonzero if
and only if $\lambda\in \h_{\F_p}^*$. Let us recall some results of the first author and collaborators
on the representation theory of $\U^\chi_{\lambda,\F}$.

Consider the flag variety $\mathcal{B}$ for $\g$ (over $\C$).
Let $e$ be a nilpotent element in $\g$ in the orbit corresponding to
that of $\chi$ (since $p\gg 0$, there is a natural bijection between the nilpotent orbits in $\g$
and in $\g_{\F}^{(1)}$). Consider the corresponding Springer fiber $\mathcal{B}_e$. In \cite{BMR}, for a regular $\lambda$, the authors have constructed
 identifications
 \begin{equation}\label{identi}
 K_0(\U_{\lambda,\F}^\chi\operatorname{-mod})\xrightarrow{\sim}
K_0(\operatorname{Coh}(\mathcal{B}_e))\xrightarrow{\sim} H_*(\mathcal{B}_e,\C)
\end{equation} (in the present paper all $K_0$-groups will be over $\C$ but, in fact,  the first isomorphism holds over $\Z$).

There is a way to identify classes of simples under  this isomorphism conjectured by Lusztig and proved
in \cite{BM}. The space $K_0(\operatorname{Coh}(\mathcal{B}_e))$ admits a $q$-deformation, the equivariant $K$-theory group
$K_0(\operatorname{Coh}^{\C^\times}(\mathcal{B}_e))$ for a contracting action of $\C^\times$ on
$\mathcal{B}_e$, \cite[Section 6]{equiv_K}. Then, according to \cite{BM},
there is a canonical basis $\mathfrak{B}$
in $K_0(\operatorname{Coh}^{\C^\times}(\mathcal{B}_e))$ such that the classes of simples in
$K_0(\operatorname{Coh}(\mathcal{B}_e))$ are the specializations of the elements of $\mathfrak{B}$
to $q=1$. The only thing that we need to know about $\mathfrak{B}$ is that it is independent of
$p$ (and depends not on $\lambda$ itself but on its {\it p-alcove}, we will not need this).

%

A big problem with this canonical basis is that it is very implicit. For example, it is unclear how to compute the dimensions of the irreducible modules. The goal of this paper is to get a more explicit information about the canonical bases elements and about dimensions of the corresponding simple modules. More precisely, we want to understand the dependence of the dimensions on $p$.

First, let us recall that $K_0(\Coh^{\C^\times}(\mathcal{B}_e))$ is a module over the affine Hecke algebra $\mathcal{H}_q(W^a)$.
Here and below we write $W^a$ for the affine Weyl group of $\g$, i.e., $W^a=W\ltimes Q$, where $Q$
is a root lattice.

Now pick a finite localization $R$ of $\Z$ and a dominant regular element $\lambda\in \h^*_R$.
Then for $p\gg 0$, we can reduce  $\lambda$ to an element in $\h^*_{\mathbb{F}_p}$. Further,
pick $b\in \mathfrak{B}$, and let $V_{\lambda,p}(b)$ denote the corresponding simple
in $\U_{\lambda,\F}^\chi\operatorname{-mod}$. Then (for $\lambda$ and $b$ fixed)
$\dim V_{\lambda,p}(b)$ is known to be a polynomial
in $p$ assuming $p$ satisfies some congruence conditions depending on $\lambda$.
Our first goal is to determine the degree of this polynomial.

Note that $\lambda$ determines a proper standard parabolic subgroup $W_{[\lambda]}\subset W^a$.
Namely, we consider the action of   $W^a$ on $\h^*_{\mathbb{Q}}$.
Let $\lambda^\circ$ be the intersection of $W^a\lambda$ with the fundamental alcove. For $W_{[\lambda]}$
we take the standard parabolic subgroup generated by the simple reflections corresponding to the walls
containing $\lambda^\circ$. For example, when $\lambda\in Q$, we have $W_{[\lambda]}=W$
(as a standard parabolic subgroup of $W^a$).

Consider the partition of $W_{[\lambda]}$ into two-sided cells. This partition also determines
a partition of the irreducible $W_{[\lambda]}$-modules (or $\mathcal{H}_q(W_{[\lambda]})$-modules
for generic $q$) into families.
We filter the module $K_0(\operatorname{Coh}^{\C^\times}(\mathcal{B}_e))$ according to
two-sided cells for $W_{[\lambda]}$. Namely, given a two-sided $\dcell$ for $W_{[\lambda]}$,
let $K_0(\operatorname{Coh}^{\C^\times}(\mathcal{B}_e))_{\leqslant \dcell}$ denote the
intersection of $K_0(\operatorname{Coh}^{\C^\times}(\mathcal{B}_e))$ with the sum of
all irreducible $\mathcal{H}_q(W_{[\lambda]})$-submodules in the localized $K_0$
that belong to families indexed by two-sided cells $\dcell'\leqslant \dcell$.

The following is the main result of the paper. Let us recall that from a two-sided cell $\dcell$ in
$W_{[\lambda]}$ we can recover a nilpotent orbit $\Orb_{\dcell}$ in $\g$,  see
Section \ref{SS_Hecke_HC} for more details.

\begin{Thm}\label{Thm:main}
The following are true:
\begin{enumerate}
\item For any regular $\lambda\in \h^*_R$, the basis $\mathfrak{B}$ of $K_0(\operatorname{Coh}^{\C^\times}(\mathcal{B}_e))$
is compatible with the filtration $K_0(\operatorname{Coh}^{\C^\times}(\mathcal{B}_e))_{\leqslant \dcell}$.
\item Let $b\in \mathfrak{B}$ lie in $K_0(\operatorname{Coh}^{\C^\times}(\mathcal{B}_e))_{\leqslant \dcell}$
but not in smaller filtration pieces. Then the degree of the polynomial $\dim V_{\lambda,p}(b)$
in $p$ equals $\dim \Orb_{\dcell}/2$.
\end{enumerate}
\end{Thm}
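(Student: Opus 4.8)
\emph{Overview and Part (1).} The plan is to pass to the geometry of the Springer resolution $\widetilde{\N}\to\N$ and the characteristic-zero theory of primitive ideals and finite W-algebras, and then feed in the characteristic-$p$ Azumaya property of quantizations. Recall from \cite{BM} that the elements of $\mathfrak{B}$ specialize at $q=1$ to the classes of the simple objects of the exotic $t$-structure on $D^{b}(\Coh^{\C^\times}(\widetilde{\N}))$ supported on the formal neighborhood of $\B_e$, that is, of the simple $\U^{\chi}_{\lambda,\F}$-modules. The $\mathcal{H}_{q}(W^{a})$-action on $K_{0}(\Coh^{\C^\times}(\B_e))$ is decategorified from the action of the affine Hecke category (equivariant coherent sheaves on the Steinberg variety), the parabolic subalgebra $\mathcal{H}_{q}(W_{[\lambda]})$ coming from a monoidal subcategory. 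First I would show that the two-sided cell filtration on $K_{0}(\Coh^{\C^\times}(\B_e))$ is decategorified from a filtration of $D^{b}(\Coh^{\C^\times}(\widetilde{\N}))$ by the full triangulated subcategories $D_{\leqslant\dcell}$ of complexes whose coherent support maps into $\overline{\Orb_{\dcell}}$ under $\widetilde{\N}\to\N$; this uses the geometric realization of Lusztig's asymptotic algebra $\Jalg$, the recipe $\dcell\mapsto\Orb_{\dcell}$, and the observation that convolution with a sheaf living over $\overline{\Orb_{\dcell}}$ preserves such a support condition. Since the exotic $t$-structure restricts to each $D_{\leqslant\dcell}$ --- a feature of its construction that one records along the way --- every exotic simple lies in exactly one associated subquotient, which is statement (1). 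Equivalently, on the $\U_{\lambda,\F}$-module side: the cell filtration is the filtration of $K_{0}(\U^{\chi}_{\lambda,\F}\operatorname{-mod})$ by the associated variety of the annihilator in $\U_{\lambda}$ of a characteristic-zero lift, and any basis consisting of classes of simple modules is automatically compatible with such a filtration.

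\emph{Part (2).} Fix $b\in\mathfrak{B}$ lying in the $\dcell$-piece but in no smaller one. Since $\dim V_{\lambda,p}(b)$ is already known to be polynomial in $p$ for $p$ in a suitable congruence class, it suffices to identify its growth rate. The annihilator $\Ann_{\U_{\lambda,\F}}(V_{\lambda,p}(b))$ is the reduction mod $p$ of a primitive ideal $\J_{b}\subseteq\U_{\lambda}$, and by part (1) together with the definition of $\Orb_{\dcell}$ through the Barbasch--Vogan--Lusztig--Spaltenstein map one gets $\VA(\U_{\lambda}/\J_{b})=\overline{\Orb_{\dcell}}$; in particular $\overline{\Orb_{\chi}}\subseteq\overline{\Orb_{\dcell}}$ by Premet's lower bound for associated varieties. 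By Premet's theorem $\dim V_{\lambda,p}(b)=p^{\dim\Orb_{\chi}/2}\cdot\dim N_{p}$, where $N_{p}$ is the corresponding simple module over the finite W-algebra $\Walg(\Orb_{\chi})$ reduced at $(\lambda,\chi)$. By the characteristic-$p$ counterpart of the second author's correspondence between primitive ideals of $\U_{\lambda}$ and finite-dimensional $\Walg$-modules, $\Ann(N_{p})$ is the reduction of a two-sided ideal $\J_{b}'$ of $\Walg(\Orb_{\chi})_{\lambda}$ whose associated variety is $S\cap\overline{\Orb_{\dcell}}$, the intersection of a Slodowy slice $S$ through $\chi$ with $\overline{\Orb_{\dcell}}$, of dimension $\dim\Orb_{\dcell}-\dim\Orb_{\chi}$. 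Finally, by the Bezrukavnikov--Kaledin theorem on quantizations in characteristic $p$, the reduction $\Walg(\Orb_{\chi})_{\lambda,\F}/\J_{b,\F}'$ is, over the Frobenius twist of the smooth symplectic locus of $S\cap\overline{\Orb_{\dcell}}$, an Azumaya algebra of rank $p^{\dim\Orb_{\dcell}-\dim\Orb_{\chi}}$; since $N_{p}$ is the unique simple over the split finite-dimensional fiber algebra at $\chi$, this forces $\deg_{p}\dim N_{p}=\tfrac12(\dim\Orb_{\dcell}-\dim\Orb_{\chi})$, whence $\deg_{p}\dim V_{\lambda,p}(b)=\tfrac12\dim\Orb_{\dcell}$.

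\emph{The main obstacle.} Two points are delicate. First, the characteristic-zero/characteristic-$p$ comparisons: one must show that the annihilators of $V_{\lambda,p}(b)$, and of $N_{p}$, assemble into flat families over the coefficient ring $R$ reducing fixed characteristic-zero ideals, so that the associated varieties above are genuinely $p$-independent. Here the $p$-independence of $\mathfrak{B}$ and of the cell structure must be leveraged through the localization equivalences of \cite{BMR}, and one must in addition identify the categorical cell filtration of part (1) with Lusztig's combinatorial one and with the associated-variety filtration on annihilators --- which is where the geometry of $\Jalg$ and the Springer correspondence enter essentially. Second, and I expect this to be the real obstacle, $\chi$ sits at the deepest, most singular stratum of $S\cap\overline{\Orb_{\dcell}}$, well outside the Azumaya locus, so a priori the fiber algebra at $\chi$ could jump in dimension; one must show that this jump multiplies $\dim N_{p}$ only by a factor bounded independently of $p$ --- equivalently, that the Hilbert--Samuel multiplicity of $\Walg(\Orb_{\chi})_{\lambda,\F}/\J_{b,\F}'$ at $\chi$ affects the leading coefficient but not the degree of the dimension polynomial. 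This is the one step that forces a genuinely local, non-generic computation on a singular symplectic slice.
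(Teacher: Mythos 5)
Your proposal departs from the paper's argument in both parts, and in both places the step you defer is exactly the step that carries the mathematical content. For part (1), the filtration $K_0(\operatorname{Coh}^{\C^\times}(\mathcal{B}_e))_{\leqslant \dcell}$ is defined through the $\mathcal{H}_q(W_{[\lambda]})$-module structure, and the identification of this Hecke-theoretic filtration with the filtration by (the cell of) the annihilator of a characteristic-zero lift is not an ``observation one records along the way'' --- it is the theorem. The paper proves it (Proposition \ref{Prop:categor_filtration}) by realizing the $W_{[\lambda]}$-action through derived tensor products with reductions mod $p$ of simple HC bimodules $\M_{w,R}$ and controlling the resulting Tor's cell by cell (Lemma \ref{Lem:HC_der_prod}). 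Your proposed substitute --- a support filtration on $D^b(\operatorname{Coh}^{\C^\times}(\widetilde{\mathcal{N}}))$ by $\overline{\Orb_{\dcell}}$ --- is too coarse: for non-integral $\lambda$ the map $\dcell\mapsto\Orb_{\dcell}$ on two-sided cells of $W_{[\lambda]}$ need not be injective (the paper notes the converse of ``$w\sim w'\Rightarrow \VA(\M_w)=\VA(\M_{w'})$'' holds only for integral $\lambda$), so a filtration indexed by orbit closures cannot separate distinct cells with the same orbit, whereas the filtration in the statement can.

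For part (2) the gap is fatal and you have named it yourself. The Azumaya property of $\Walg_{\lambda,\F}/\J'_{b,\F}$ over the smooth symplectic locus of $S\cap\overline{\Orb}_{\dcell}$ constrains the generic rank, but $\chi$ is the cone point of the slice, outside the Azumaya locus, and nothing in your argument bounds $\dim N_p$ from below there: a priori the simple module over the fiber algebra at the most singular point could have dimension of strictly smaller order in $p$. Asserting that the degeneration ``affects the leading coefficient but not the degree'' is a restatement of the theorem, not a proof of it. The paper circumvents this entirely by a different mechanism: it first proves the $\chi=0$ case by Etingof's comparison of graded dimensions of $L_{\F_p}(\lambda)$ and $L_{\mathbb{Q}}(\lambda)$ via contravariant forms and a Jantzen-type argument (Lemma \ref{Lem:dim_growth}), which converts the known Gelfand--Kirillov dimension $\dim\Orb_{\dcell}/2$ in characteristic zero into the degree of the dimension polynomial; it then transports the answer to general $\chi$ through the $p$-independent degeneration map $\delta:K_0(\U^\chi_{\lambda,\F}\operatorname{-mod})\to K_0(\U^0_{\lambda,\F}\operatorname{-mod})$, the crux being Proposition \ref{Prop:degen_cell_image} --- that $\delta([L])$ has nonzero $\dcell$-isotypic component --- which is where the length-growth estimates of Propositions \ref{Prop:char_p_length} and \ref{Prop:HC_length} are used. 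Some replacement for this degeneration-plus-isotypic-component step (or a genuine local computation at the singular point of the slice) is needed before your outline becomes a proof.
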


\begin{Rem}
There is a classical analog of (2) for categories in characteristic $0$ such as category $\mathcal{O}$.
There the result is that the Gelfand-Kirillov dimension of the module corresponding to a canonical basis element
equals $\dim \Orb_{\dcell}/2$. So part (2) means that the degree of the dimension polynomial is the modular
analog of the Gelfand-Kirillov dimension. Heuristically this can justified as follows: a module of Gelfand-Kirillov dimension $d$ has "the same size" as the space of sections of a coherent sheaf on $\g^*$ with support of dimension $d$, while a module in characteristic $p$ whose dimension $D$ is expressed by a polynomial in $p$ of degree $d$ has the same size as the space of sections  of such a coherent sheaf restricted to the Frobenius
 neighborhood of a point, see also Remark \ref{upper_bound_alt}.

We can also reformulate (2) as follows. We will see below that
there is a unique primitive ideal $\J\in \U$ such that the simple corresponding to $b$ is annihilated
by the reduction of $\J$ mod $p$. We will see that $\overline{\Orb}_{\dcell}$ is the associated variety
of $\J$ so that the degree of the dimension polynomial is $\frac{1}{2}\operatorname{GK-}\dim(\U/\J)$.
We expect that an analog of this result holds in a much greater
generality, for example, for quantizations of symplectic singularities.
\end{Rem}

Let us discuss some applications of Theorem \ref{Thm:main}. First, it allows us to prove conjectures of the second author
and Ostrik on the classification of finite dimensional irreducible modules over the finite W-algebra $\Walg$
for $(\g,e)$, see \cite[Section 7.6]{LO}.  This is Theorem \ref{Thm:Walg_classif_regular} in the paper. In particular, this theorem implies that the $K_0$ of the finite dimensional representations of $\Walg$ with central character $\lambda$
coincides with $\bigoplus_{\dcell} K_0(\operatorname{Coh}(\mathcal{B}_e))_{\leqslant\dcell})$,
where the sum is taken over all two-sided cells in $W_{[\lambda]}$ such that $\Orb_{\dcell}=Ge$.
In fact, for such $\dcell$ we have $K_0(\operatorname{Coh}(\mathcal{B}_e))_{<\dcell})=0$.
The first author and Kazhdan plan to use part (1) and the result mentioned in the previous sentence to study restrictions of characters for unipotent irreducible representations of p-adic groups.

Another application that motivated the main result is a strengthened version of the result of \cite{ABM}.
The central point of {\em loc. cit.} is the definition of  {\em real variation of stabilities}, a concept partly inspired by the notion of a Bridgeland stability condition
on a triangulated category, and a theorem asserting that the categories of $\U_{\lambda,\F}^\chi$-modules
give rise to such a structure.
Let us describe it  in more detail. The above
identification $K_0(\U_{\lambda,\F}^\chi\operatorname{-mod})\xrightarrow{\sim}
K_0(\operatorname{Coh}(\mathcal{B}_{e,\F}))$ comes from
an equivalence of triangulated categories $\LL_{\tilde{\lambda}}: D^b(\U_{\lambda,\F}\operatorname{-mod}^\chi)\to D^b(\operatorname{Coh}_{\mathcal{B}_e} ( T^*\mathcal{B}_\F))$,
(the definition of $\LL_{\tilde{\lambda}}$ is recalled below after Lemma \ref{Lem:K_0_indep}).
Here $\tilde \lambda$ is an element of the root lattice such that $\lambda =\tilde \lambda \mod p$;  
 $\operatorname{Coh}_{\mathcal{B}_e}(T^*\mathcal{B}_\F)$ denotes the category of coherent sheaves on $T^*\mathcal{B}_\F$ set-theoretically
 supported on the closed subvariety $\mathcal{B}_e$, 
 while $\U_{\lambda,\F}\operatorname{-mod}^\chi$ is the category of modules
 over $\U_{\lambda,\F}$ where the kernel of $\chi$ acts nilpotently.  The image of the abelian category  $\U_{\lambda,\F}\operatorname{-mod}^\chi$
 under the equivalence $\mathcal{L}_{\tilde \lambda}$, i.e. the corresponding $t$-structure on
 $D^b(\operatorname{Coh}_{\mathcal{B}_\chi} ( T^*\mathcal{B}_\F))$ depends only on the
 $p$-alcove of $\tilde \lambda$, not on $\lambda$ itself. Thus we get a collection of $t$-structures on the derived category of coherent sheaves
 indexed by alcoves; although the above construction applies to varieties of large finite characteristic only, the $t$-structures admits a canonical lift to
 $D^b(\operatorname{Coh}_{\mathcal{B}_e}(T^*\mathcal{B}_\Ce))$.

 It turns out to be a part of a real variation of stability conditions; the content of this statement is as follows:
 for two neighboring alcoves sharing a codimension one face the derived equivalence between the corresponding abelian categories
 is a {\em perverse equivalence} governed by a certain polynomial map $Z:{\mathfrak{t}}^*_{\mathbb{R} } \to K_0(\operatorname{Coh}(\mathcal{B}_e))^*$
 called the central charge map.

 A conjecture stated in {\em loc. cit.}  \cite[Remark 6]{ABM}  asserts that a similar property should hold for two alcoves symmetric
relative to a higher codimension face of the affine coroot stratification of ${\mathfrak{t}}^*_{\mathbb{R} }$. In Section \ref{izvrat} we deduce
(a statement essentially equivalent to) that conjecture from Theorem \ref{Thm:main}. Again, we expect  a similar statement to hold for all (or at least for a wide class of)
symplectic  singularities.


{\bf Acknowledgements}. We would like to thank Pavel Etingof for a kind permission to use his results in this paper,
see Section \ref{SS_Etingof}, and Victor Ostrik for numerous discussions related to the project.
The work of R.B. was partially supported by the NSF under the grant DMS-1601953. The work of I.L.
has been funded by the  Russian Academic Excellence Project '5-100'
and was partially supported by the NSF under the grant DMS-1501558.

\section{Preliminaries}
\subsection{Harish-Chandra bimodules and primitive ideals}\label{SS_HC_prim}
Let us write $\U$ for $U(\g)$ and $\U_\lambda$ for the central reduction of $\U$ at $\lambda\in \h^*$.

Recall that by a Harish-Chandra (shortly, HC) $\U$-bimodule one means a finitely generated $\U$-bimodule
with locally finite adjoint action of $\g$. In this paper we will only consider the
bimodules where the adjoint $\g$-action integrates to an action of $G:=\operatorname{Ad}(\g)$.
Every HC bimodule admits a so called {\it good filtration}, i.e., a $G$-stable filtration
such that the associated graded is finitely generated as a module over $S(\g)$ (since the filtration
is $G$-stable the left and the right actions of $\g$ on the associated graded coincide).

We will write $\HC(\U)$ for the category of HC $\U$-bimodules and $D^b_{HC}(\U\operatorname{-bimod})$
for the full subcategory of $D^b(\U\operatorname{-bimod})$ of all objects with HC homology. We note
that $D^b_{HC}(\U\operatorname{-bimod})$ is closed under taking the derived tensor products.

Inside $\HC(\U)$ we will consider three kinds of subcategories defined by the central character
conditions. Fix $\lambda,\lambda'\in \h^*$. We consider the subcategory
$\,^1_{\lambda}\HC^1_{\lambda'}$ of all HC bimodules with genuine central characters
$\lambda$ on the left and $\lambda'$ on the right. When $\lambda=\lambda'$, we
write $\HC(\U_\lambda)$ for $\,^1_{\lambda}\HC^1_{\lambda}$. Note that $\HC(\U_\lambda)$ is a monoidal category.
We can also consider the larger subcategory $\,^\infty_{\lambda}\HC^\infty_{\lambda'}$, where the
central characters on the left and on the right are generalized. Finally, there is an intermediate
category $\,^\infty_{\lambda}\HC^1_{\lambda'}$.
Note that $\,^1_{\lambda}\HC^1_{\lambda'}$ contains all simple objects in
$\,^\infty_{\lambda}\HC^\infty_{\lambda'}$ and all objects in
$\,^\infty_{\lambda}\HC^\infty_{\lambda'}$ have finite length. Because of this,
the $K_0$'s of these categories are the same.

Now suppose that $\lambda$ is regular. Let $\mu\in W\lambda$ be anti-dominant meaning that
$\langle\alpha^\vee,\mu\rangle\not\in \Z_{\geqslant 0}$ for any positive coroot $\alpha^\vee$.
Fix this $\mu$ (it is not unique unless
$\lambda$ is integral). Consider the block $\mathcal{O}(\mu)$
of the BGG category $\mathcal{O}$ spanned by the simples $L(u\mu)$ (with highest weight
$u\mu-\rho$), where $u$ is in the integral Weyl group $W_{\mu,int}$ of $\mu$. Recall that this group
is generated by all reflections  $s_\alpha$ such that $\langle\alpha^\vee,\mu\rangle\in \Z$.

Then there is the Bernstein-Gelfand
equivalence $\,^\infty_{\lambda}\HC^1_{\lambda}\xrightarrow{\sim} \mathcal{O}(\mu)$ given
by $\mathcal{M}\mapsto \mathcal{M}\otimes_{\U_\lambda}\Delta(w_0\mu)$, where $w_0$ is the longest
element in $W_{\mu,int}$ (so that $\Delta(w_0\mu)$ is projective in $\mathcal{O}(\mu)$). In particular, the simples
in $\HC(\U_\lambda)$ are labelled by $u\in W_{\mu,int}$. Note that there is a natural
isomorphism $W_{[\mu]}\xrightarrow{\sim} W_{\mu,int}$: namely, let $w_1\in W^a$ be the minimal
length element such that (in the notation of the introduction) $\mu=w_1\mu^\circ$. Then an isomorphism
$W_{[\mu]}\xrightarrow{\sim} W_{\mu,int}$ is given by $w\mapsto \operatorname{pr}(w_1^{-1}ww_1)$,
where we write $\operatorname{pr}$ for the projection $W^a\twoheadrightarrow W$. This defines a
bijection $\operatorname{Irr}(\HC(\U_\lambda))\xrightarrow{\sim} W_{[\lambda]}$.
Let us write $\mathcal{M}_w$ for the simple HC $\U_\lambda$-bimodule corresponding to $w\in W_{[\lambda]}$.

%

Let $\mathcal{M}$ be a HC $\U$-bimodule. By the associated variety, $\VA(\mathcal{M})$,
we mean the support of $\gr\mathcal{M}$ in $\g$, where the associated graded is taken with respect
to any good filtration. We note that $\VA\left(\operatorname{Tor}^{\U}_i(\mathcal{M}_1,\mathcal{M}_2)\right)
\subset \VA(\mathcal{M}_1)\cap \VA(\mathcal{M}_2)$.

Let us fix a nilpotent orbit $\Orb\subset \g$. We can consider the subcategories $\HC_{\partial\Orb}(\U)
\subset \HC_{\overline{\Orb}}(\U)$ of all $\mathcal{M}\in \HC(\U)$ with
$\VA(\mathcal{M})\subset \partial\Orb$ (resp., $\VA(\mathcal{M})\subset \overline{\Orb}$).
These are tensor ideals in $\HC(\U)$. So we can form the quotient category
$\HC_{\Orb}(\U)$ that also carries the tensor product. Let $\HC_{\Orb}^{ss}(\U)$
denote the full subcategory of semisimple objects in $\HC_\Orb(\U)$. One can show, using,
for example, \cite[Corollary 1.3.2]{HC}, that the subcategory $\HC^{ss}_{\Orb}(\U)$ is closed under
taking the tensor products. Moreover, it is a rigid monoidal category. This has the following corollary.

\begin{Cor}\label{Cor:tens_prod_filtration}
For simple HC bimodules $\M,\M'\in \HC_{\overline{\Orb}}(\U)$, there are $M_1\subset M_2\subset \M\otimes_{\U}\M'$
with $M_1,\M\otimes_\U \M'/M_2\subset \HC_{\partial\Orb}(\U)$, while $M_2/M_1$ is the sum of simple
HC bimodules with associated variety $\overline{\Orb}$.
\end{Cor}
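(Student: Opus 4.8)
The plan is to exploit the rigid monoidal structure on $\HC^{ss}_{\Orb}(\U)$ together with the fact that the quotient functor $\HC_{\overline{\Orb}}(\U)\twoheadrightarrow \HC_{\Orb}(\U)$ is exact with kernel the Serre subcategory $\HC_{\partial\Orb}(\U)$. First I would consider $\M\otimes_{\U}\M'$; since $\M,\M'$ are HC bimodules the tensor product lies in $D^b_{HC}(\U\operatorname{-bimod})$, and its zeroth homology $\M\otimes_{\U}\M'$ is an honest HC bimodule (the higher Tor's also being HC, with associated variety inside $\VA(\M)\cap\VA(\M')\subset\overline{\Orb}$, though they will not be needed here). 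So $\M\otimes_{\U}\M'\in\HC_{\overline{\Orb}}(\U)$. Applying the exact quotient functor $\pi\colon\HC_{\overline{\Orb}}(\U)\to\HC_{\Orb}(\U)$ we get $\pi(\M\otimes_{\U}\M')=\pi(\M)\otimes\pi(\M')$. Now $\pi(\M),\pi(\M')$ are simple (or zero) objects of $\HC_{\Orb}(\U)$, hence semisimple, hence lie in $\HC^{ss}_{\Orb}(\U)$; by the quoted closure statement their tensor product $\pi(\M)\otimes\pi(\M')$ is again semisimple, i.e. $\pi(\M\otimes_{\U}\M')$ is a semisimple object of $\HC_{\Orb}(\U)$.

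Next I would transport this back upstairs. Since $\pi$ is exact with kernel $\HC_{\partial\Orb}(\U)$, the preimage under $\pi$ of a Serre subcategory is Serre, and semisimplicity of $\pi(N)$ for $N:=\M\otimes_{\U}\M'$ means precisely that in the composition series of $N$ (as a $\U$-bimodule) the successive quotients either have associated variety contained in $\partial\Orb$ or are simple bimodules with associated variety exactly $\overline{\Orb}$, and moreover — this is the key consequence of $\pi(N)$ being \emph{semisimple} rather than merely of finite length in $\HC_{\Orb}(\U)$ — there is no nontrivial extension in $N$ between a subquotient with full support $\overline{\Orb}$ and one with support in $\partial\Orb$ that survives in the quotient category. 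Concretely: let $M_1\subset N$ be the largest subobject lying in $\HC_{\partial\Orb}(\U)$, and let $M_2\subset N$ be the preimage of the socle-type piece; since $\pi(N)$ is semisimple, $\pi(N)\cong\pi(N/M_1)$ decomposes as a direct sum of simples with full support, and I can choose $M_2$ so that $\pi(M_2)=\pi(N)$ while $M_2/M_1$ contains no subquotient supported on $\partial\Orb$, and $N/M_2\in\HC_{\partial\Orb}(\U)$. Then $M_2/M_1$ is a bimodule all of whose composition factors are simple with associated variety $\overline{\Orb}$ and on which $\pi$ is the identity up to the semisimple object $\pi(N)$; since $\pi$ is faithful on objects with no $\partial\Orb$-subquotients and reflects semisimplicity there, $M_2/M_1$ is itself semisimple, i.e. a direct sum of simple HC bimodules with associated variety $\overline{\Orb}$.

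I would carry out the steps in that order: (i) produce $N=\M\otimes_{\U}\M'\in\HC_{\overline{\Orb}}(\U)$; (ii) apply the exact quotient functor and invoke rigidity/closure of $\HC^{ss}_{\Orb}(\U)$ under $\otimes$ to conclude $\pi(N)$ is semisimple; (iii) define $M_1$ as the maximal $\partial\Orb$-subobject and $M_2$ as the preimage of $\pi(N)$ under the induced surjection $N/M_1\to\pi(N/M_1)$, pulled back to $N$; (iv) check $N/M_2\in\HC_{\partial\Orb}(\U)$ and that $M_2/M_1$, having no $\partial\Orb$-subquotients and semisimple image under $\pi$, is a sum of simples with associated variety $\overline{\Orb}$.

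The main obstacle I anticipate is step (iii)--(iv): translating the \emph{semisimplicity} of $\pi(N)$ in the Serre quotient into the existence of the honest filtration $M_1\subset M_2\subset N$ upstairs. One has to rule out the possibility that $N$ has a ``mixed'' piece — a non-split self-extension involving a full-support simple and a $\partial\Orb$-supported bimodule — which would be killed by $\pi$ but obstruct the clean three-step filtration. This is handled by noting that $\pi$ restricted to the full subcategory of bimodules with no $\partial\Orb$-subquotients is fully faithful (standard for the quotient by a Serre subcategory, using the largest-subobject/largest-quotient description), so semisimplicity downstairs forces semisimplicity of $M_2/M_1$ upstairs; the bookkeeping of choosing $M_1,M_2$ correctly, rather than any deep input, is the real work.
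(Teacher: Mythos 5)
Your argument is correct and is exactly the one the paper intends: the corollary is stated there as an immediate consequence of the closure of $\HC^{ss}_{\Orb}(\U)$ under tensor products, i.e.\ of the semisimplicity of the image of $\M\otimes_\U\M'$ in the Serre quotient $\HC_{\Orb}(\U)$, and your steps (i)--(iv) simply make the descent explicit. The cleanest way to finish (iii)--(iv) is the one you hint at rather than the a priori choice of $M_2$ you first describe: with $M_1$ the largest subobject of $N=\M\otimes_\U\M'$ lying in $\HC_{\partial\Orb}(\U)$ and $M_2\supset M_1$ the smallest subobject with $N/M_2\in \HC_{\partial\Orb}(\U)$, each splitting map $\pi(L_i)\to\pi(N)$ lifts to an honest injection $L_i\to M_2/M_1$ because $M_2/M_1$ has no nonzero subobject in the kernel, and the resulting map $\bigoplus_i L_i\to M_2/M_1$ is injective with cokernel a quotient of $M_2/M_1$ lying in the kernel, hence is an isomorphism -- this also rules out, a posteriori, the ``sandwiched'' $\partial\Orb$-factors you worry about.
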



Let us proceed to primitive ideals (=annihilators of irreducible representations).
We write $\operatorname{Prim}(\U_\lambda)$ for the set of primitive ideals in $\U_\lambda$.
By the Duflo theorem, every primitive ideal in $\U_\lambda$ is the annihilator $\J(\lambda')$
of some irreducible module $L(\lambda')$ with $\lambda'\in W\lambda$.
Inside $\operatorname{Prim}(\U_\lambda)$  we can consider the subset $\operatorname{Prim}_{\Orb}(\U_\lambda)$ of all
$\J$ such that $\VA(\U_\lambda/\J)=\Orb$.

Suppose that $\lambda$ is regular. We have a surjection $W_{[\lambda]}\twoheadrightarrow \Prim(\U_\lambda)$ that sends $w\in W_{[\lambda]}$ to the left annihilator of $\mathcal{M}_w$, let us denote it by $\J_w$. We have
$\J_w=\J(w\mu)$ in our previous notation. The right annihilator of $\M_w$
is $\J_{w^{-1}}$.

Now suppose $\lambda_0$ is singular (and dominant). Pick a strictly dominant element $\mu$ in the root lattice
and let $\lambda=\lambda_0+\mu$ so that, in particular, $\lambda$ is regular dominant.
We have $\J(w'\lambda_0)=\J(w\lambda_0)$ provided $\J(w'\lambda)=\J(w\lambda)$,
see \cite[Section 5.4-5.8]{Jantzen}. This gives the embedding $\Prim(\U_{\lambda_0})\hookrightarrow
\Prim(\U_{\lambda})$ whose image consists of the primitive ideals $\J(w\lambda)$, where
$w$ is longest in $wW_{\lambda_0}$. The embedding sends $\Prim_{\Orb}(\U_{\lambda_0})$  to $\Prim_{\Orb}(\U_\lambda)$.

\subsection{Hecke algebras, cells, and HC bimodules}\label{SS_Hecke_HC}
For a Weyl group $W$ we can consider its Hecke algebra $\mathcal{H}_q(W)$ which comes with the distinguished
basis $c_w,w\in W,$ known as the Kazhdan-Lusztig basis (we use the convention, where the elements $c_w$ are sign-positive with respect to the standard basis $T_w$). This basis allows us to define the so called {\it two-sided pre-order} on the basis elements.  Namely, consider the two-sided based (=spanned by basis elements as a $\C[q^{\pm 1}]$-module)
ideal $I_w$. Set $w\leqslant w'$ if $I_w\subset I_{w'}$. The equivalence classes for this pre-order are known as the {\it two-sided cells}. The induced order on the set of two-sided cells will also be denoted by $\leqslant$. Similarly, we can consider left based ideals, and have the pre-order $\leqslant^L$ and the equivalence relation $\sim^L$ on $W$. The equivalences classes are known as the {\it left cells}.

The two-sided cells and left cells naturally define subquotients of $\mathcal{H}_q(W)$ that are bimodules and
left modules, respectively (called two-sided and left cell modules). The two-sided cell modules allow to partition
irreducible representations of $\mathcal{H}_q(W)$ (and of $\C W$ when $W$ is of finite type) into subsets called
families.

Now let us discuss a connection between the Hecke algebras and HC bimodules. Let $W$ be the Weyl group of $\g$. The category $D^b_{HC}(\U_\lambda\operatorname{-bimod})$ is  monoidal with respect to $\bullet\otimes^L_{\U_\lambda}\bullet$. This monoidal structure equips $K_0(\HC(\U_\lambda))$ with an algebra structure.
The resulting algebra is $\C W_{[\lambda]}$. The class $\mathcal{M}_w$ corresponds to the specialization
of $c_{w^{-1}}$ to $q=1$. The simple reflections in $\C W_{[\lambda]}$ correspond
to the so called wall-crossing bimodules in $K_0(\HC(\U_\lambda))$.

Let us recall the definition of these bimodules. For $\lambda\in \h^*$, let us write $\mathcal{D}_{\mathcal{B}}^\lambda$
for the sheaf of $(\lambda-\rho)$-twisted differential operators. Pick $w\in W_{[\lambda]}$ and view it as an element
of $W_{\mu,int}$ as before. Set $\psi=w_0\mu-w^{-1}w_0\mu$, this is an element of the root lattice.
Let $\mathcal{WC}_w$ denote the $\mathcal{D}^{w_0\mu}_{\mathcal{B}}$-$\mathcal{D}^{w_0\mu-\chi}_{\mathcal{B}}$-bimodule
quantizing the line bundle $\mathcal{O}(\psi)$ on $T^*\mathcal{B}$. Then the wall-crossing bimodule
$\mathsf{WC}_{w}$ is the global sections of $\mathcal{WC}_w$.

Moreover, we get a homomorphism $\operatorname{Br}_{W_{[\lambda]}}\rightarrow D^b_{HC}(\U_\lambda\operatorname{-bimod})$
sending the natural generators of the braid groups to the wall-crossing bimodules,
see  \cite[Section L.3]{Milicic} or \cite[Section 2]{BMR_sing} (that treats the positive characteristic case).

Let us now discuss the representation theoretic meaning of cells.
We have $\J_{w}\subset \J_{w'}$ if and only if $w\leqslant^L w'$ and hence
$\J_w=\J_{w'}$ if and only if $w\sim^L w'$, this follows from combining
\cite[Lemma 7.4]{cells4} and \cite[Theorem 3.10]{Joseph_variety}. So if
$w,w'$ are in the same two-sided cell, then $\VA(\M_w)=\VA(\M_{w'})$
(and the converse is true for integral $\lambda$).
So to a two-sided cell we can assign a nilpotent orbit in $\g$, let us denote it by
$\Orb_{\dcell}$.
It is easy to see that $\dcell<\dcell'$ implies $\Orb_\dcell\subsetneq\Orb_{\dcell'}$.

Recall that every left cell contains a so called distinguished (a.k.a. Duflo) involution, say $d$. The corresponding simple
HC bimodule $\M_d$ is the socle of $\U_\lambda/\J_d$. Moreover, the quotient
$\VA((\U_\lambda/\J_d)/\M_d)\subset \partial\Orb_\dcell$.


Now let us discuss asymptotic Hecke algebras. To any Weyl group $W$ Lusztig assigned the so
called asymptotic Hecke algebra $\Jalg=\Jalg(W)$ that is a unital associative algebra (say,
over $\C$) together with a distinguished basis $t_w, w\in W$. The unit in $\Jalg$ is the
element $\sum_{d}t_d$, where the sum is taken over all distinguished involutions in $W$.
There is a homomorphism $\C W\rightarrow \Jalg$ that is known to be an isomorphism when $W$ is of finite type.


Note that we have $t_wt_{w'}=0$ when $w,w'$ lie in two different two-sided cells.
So we get a decomposition $\Jalg=\bigoplus_{\dcell}\Jalg_{\dcell}$, where $\Jalg_{\dcell}$
is the ideal in $\Jalg$ with basis $t_w, w\in \dcell$. Note that (for $W$ of finite type)
the irreducible $W$-modules that belong to a two-sided cell $\dcell$ are precisely the modules
obtained by pullback $\Jalg_{\dcell}$. Moreover, if $\sigma$ is a left cell in $W$ with distinguished
involution $d$, then the left cell module $[\sigma]$ is $\Jalg(W)t_d$.

Now let us give a categorical interpretation of the algebra $\Jalg(W_{[\lambda]})$.
Consider the rigid monoidal category $\bigoplus_{\Orb} \HC^{ss}_{\Orb}(\U_\lambda)$, where
the sum is taken over all nilpotent orbits in $\g$ (some summands may be zero).
Note that $\HC^{ss}_{\Orb}(\U_\lambda)$ splits as $\bigoplus_{\dcell} \HC^{ss}_{\dcell}(\U_\lambda)$,
where the summand $\HC^{ss}_{\dcell}(\U_\lambda)$ is spanned by the $\M_w$'s with $w\in \dcell$.
So our category can be written as $\bigoplus_{\dcell} \HC^{ss}_{\dcell}(\U_\lambda)$, where the sum
is taken over all two-sided cells in $W_{[\lambda]}$. Then, by the work of Joseph,  e.g.,
\cite{Joseph_cyclicity}, see also \cite{BFO}, it is known that $K_0(\HC^{ss}_{\dcell}(\U_\lambda))=
\Jalg_{\dcell}(W_{[\lambda]})$.

\subsection{Localization in characteristic $p$}\label{SS_loc_char_p}
Let us explain results of \cite{BMR,BM} related to the localization in characteristic $p\gg 0$.

Pick a regular dominant element $\lambda\in \h^*_{\mathbb{Q}}$. Let $x$ be the least common denominator
of the coefficients of the simple roots in $\lambda$. In what follows we assume that $p+1$ is
divisible by $x$ so that $(p+1)\lambda$ lies in the root lattice.

Let $\F$ be an algebraically closed field of large enough characteristic $p$. Recall that $\mathcal{B}_{\F}$ stands for the flag variety for $G$ over $\F$. Then we have the sheaf $\mathcal{D}^{\lambda}_{\mathcal{B}_\F}$ that is an Azumaya algebra on $T^*\mathcal{B}^{(1)}_\F$. Note that the categories $\Coh(\mathcal{D}^{\lambda}_{\mathcal{B}_\F}),
\Coh(\mathcal{D}^{\lambda'}_{\mathcal{B}_\F})$ are abelian equivalent, say via twist with a
line bundle $\mathcal{O}(\mu)$, where $\mu$ is a weight congruent to $\lambda'-\lambda$ mod $p$.

We have $R\Gamma(\mathcal{D}^\lambda_{\mathcal{B}_\F})=\U_{\lambda,\F}$.
It was shown in \cite[Section 3.2]{BMR} that the derived global section functor
$R\Gamma: D^b(\operatorname{Coh}(\mathcal{D}^{\lambda}_{\mathcal{B}_\F}))\rightarrow D^b(\U_{\lambda,\F}\operatorname{-mod})$ is an equivalence. Further, it was checked in
\cite[Section 5.4]{BMR} that $\mathcal{D}^{\lambda}_{\mathcal{B}_\F}$ splits in the formal neighborhood
$\mathcal{B}^{(1)\wedge}_{\F,\chi}$ of the Springer fiber in $T^*\mathcal{B}^{(1)}_\F$.

Pick a splitting bundle $\mathcal{V}_{\chi,\F}$. This gives rise to the abelian equivalence $$\mathcal{V}_{\chi,\F}\otimes\bullet:\operatorname{Coh}_\chi(T^*\mathcal{B}_\F^{(1)})
\xrightarrow{\sim} \operatorname{Coh}_\chi(\mathcal{D}^{\lambda}_{\mathcal{B}_\F})),$$ where the
subscript $\chi$ refers to the subcategory of sheaves set-theoretically supported at the Springer fiber.
So we arrive at the derived equivalence $D^b_\chi(\operatorname{Coh}(T^*\mathcal{B}_\F^{(1)}))\xrightarrow{\sim} D^b_\chi(\U_{\lambda,\F}\operatorname{-mod})$ given by $M\mapsto R\Gamma(\mathcal{V}_{\chi,\F}\otimes M)$.
The following was shown in \cite[Lemma 6.2.5]{BMR}:

\begin{Lem}\label{Lem:K_0_indep} Fix  $\lambda'$ in the root lattice such that
$\lambda ' = \lambda \mod p$.
Then there exists a canonical choice of the splitting bundle $\mathcal{V}_{\chi,\F}$   (recall it is defined up to a twist with a line
bundle)  such that the class $[\mathcal{V}_{\chi,\F}]\in K_0(\mathcal{B}^{(1)}_{\chi,\F})$ is the pull-back of
$[(\operatorname{Fr}_{\mathcal{B}_\F})_*\mathcal{O}\left((p-1)\rho+\lambda'\right)]$.
\end{Lem}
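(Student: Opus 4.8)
\textbf{Proof proposal for Lemma \ref{Lem:K_0_indep}.}

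The plan is to identify the splitting bundle through its behavior on $K_0$, exploiting the fact that on the Springer fiber the Azumaya algebra $\mathcal{D}^\lambda_{\mathcal{B}_\F}$ is not merely split in the formal neighborhood but, after the appropriate twist, the splitting can be chosen compatibly with the $\U_{\lambda,\F}$-module structure. The key structural input is that $\mathcal{D}^{\lambda}_{\mathcal{B}_\F}$, viewed as a module over itself, corresponds under the localization equivalence to the regular bimodule, and that $R\Gamma$ of the splitting bundle computes a projective generator of $\U_{\lambda,\F}\operatorname{-mod}$ after tensoring. First I would recall that, because $\lambda'$ lies in the root lattice, the line bundle $\mathcal{O}(\lambda')$ is genuinely defined over $\F$ (not just up to $p$-twist), so the expression $(\operatorname{Fr}_{\mathcal{B}_\F})_*\mathcal{O}((p-1)\rho + \lambda')$ makes sense as an honest coherent sheaf on $T^*\mathcal{B}_\F^{(1)}$ supported (scheme-theoretically) on the zero section $\mathcal{B}_\F^{(1)}$, hence in particular its restriction to the formal neighborhood of any Springer fiber is a well-defined class in $K_0(\mathcal{B}^{(1)}_{\chi,\F})$.

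Next I would verify that $(\operatorname{Fr}_{\mathcal{B}_\F})_*\mathcal{O}((p-1)\rho)$ is a splitting bundle for $\mathcal{D}^0_{\mathcal{B}_\F}$ in the relevant neighborhood: this is essentially the statement that pushforward along Frobenius of a line bundle whose degree is ``$(p-1)\rho$'' realizes the standard Morita equivalence between $\mathcal{D}$-modules and $\mathcal{O}$-modules in characteristic $p$ (the Frobenius-descent / Cartier picture), and the twist by $\mathcal{O}(\lambda')$ moves this from central character $0$ to central character $\lambda = \lambda' \bmod p$ by the abelian equivalence $\Coh(\mathcal{D}^0_{\mathcal{B}_\F}) \simeq \Coh(\mathcal{D}^\lambda_{\mathcal{B}_\F})$ recalled just before the lemma. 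The point of specifying $\lambda'$ in the root lattice (rather than an arbitrary weight congruent to $\lambda$) is precisely that the twisting line bundle exists integrally, so the resulting splitting bundle is canonical: different choices of $\lambda'$ in the root lattice with $\lambda' \equiv \lambda$ differ by tensoring with $\mathcal{O}(p\nu)$ for $\nu \in Q$, and $(\operatorname{Fr}_{\mathcal{B}_\F})_*(\mathcal{F} \otimes \mathcal{O}(p\nu)) \cong (\operatorname{Fr}_{\mathcal{B}_\F})_*\mathcal{F} \otimes \mathcal{O}(\nu)^{(1)}$ by the projection formula, so the $K_0$-class is pinned down once we fix $\lambda'$ — but I would be careful here: the lemma asserts the class is the pullback of a fixed element, so I should check the dependence on $\nu$ is consistent with ``defined up to a twist with a line bundle.'' Then I would invoke \cite[Section 5.4]{BMR} to know a splitting bundle exists, observe that any two splitting bundles differ by a line bundle on $\mathcal{B}^{(1)}_{\chi,\F}$ (since the Azumaya algebra has trivial Brauer class there and $\mathcal{B}^{(1)}_{\chi,\F}$ is connected), and finally match the $K_0$-class of the Frobenius pushforward with that of an abstract splitting bundle: both have rank $p^{\dim \mathcal{B}}$ (the degree of the Azumaya algebra), and the line-bundle ambiguity is exactly absorbed by the choice of $\lambda'$, giving the canonical normalization.

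The main obstacle I anticipate is the last matching step — showing that the specific class $[(\operatorname{Fr}_{\mathcal{B}_\F})_*\mathcal{O}((p-1)\rho+\lambda')]$ really is (the restriction of) a splitting bundle for $\mathcal{D}^\lambda_{\mathcal{B}_\F}$ and not just for the untwisted $\mathcal{D}^0$, uniformly as $p$ varies. This requires knowing that the Frobenius-pushforward construction of a splitting bundle commutes appropriately with the twist that implements $\Coh(\mathcal{D}^0) \simeq \Coh(\mathcal{D}^\lambda)$, i.e. that tensoring the splitting bundle by $\mathcal{O}(\lambda')$ on $T^*\mathcal{B}_\F$ intertwines the Azumaya structures — which ultimately comes down to a compatibility between the $p$-curvature / Artin-Schreier description of $\mathcal{D}^\lambda$ and the line bundle $\mathcal{O}((p-1)\rho + \lambda')$ having ``$p$-th power part'' $\mathcal{O}(\lambda')^{(1)}$. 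Since this is \cite[Lemma 6.2.5]{BMR}, I would ultimately cite it, but the proof sketch above explains why it holds: it is the Frobenius-descent realization of the splitting, normalized by the integral twist.
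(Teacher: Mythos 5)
The paper gives no proof of this lemma at all: it is quoted verbatim as \cite[Lemma 6.2.5]{BMR}, which is exactly where your proposal also ends up, so your approach matches the paper's. Your surrounding sketch of why the BMR result holds (Frobenius descent realizing the splitting on the zero section, normalized by the integral twist $\mathcal{O}(\lambda')$) is a reasonable gloss; the one point it elides is that for $\chi\neq 0$ the Springer fiber is disjoint from the zero section, so ``pull-back'' means pull-back along the projection $\mathcal{B}_{\chi}^{(1)}\to\mathcal{B}^{(1)}$ rather than sheaf restriction, and transporting the normalization from the zero section to $\mathcal{B}_\chi^{(1)}$ is precisely the content of the cited BMR argument.
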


The resulting equivalence is denoted by $\LL_{\lambda'}$.

Below we always choose $\mathcal{V}_{\chi,\F}$ as in the lemma.

\begin{Prop}\label{Cor:simples_char_p}
The following is true:
\begin{enumerate}
\item The images of the classes of simple $\U_{\lambda,\F}^\chi$-modules in $K_0(\operatorname{Coh}(\mathcal{B}_\chi))$
are independent of $p$ (as long as $p\gg 0$).
\item The dimensions of the simple $\U_{\lambda,\F}^\chi$-modules are polynomials in $p$
provided $p+1$ is divisible by $x$.
\end{enumerate}
\end{Prop}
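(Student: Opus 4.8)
The plan is to establish both parts by transferring information between characteristic $0$ and characteristic $p$ through the derived equivalence $\LL_{\lambda'}$ of Lemma~\ref{Lem:K_0_indep}, using the fact that the splitting bundle has been pinned down canonically there. For part (1), the point is that the class $[\mathcal{V}_{\chi,\F}]\in K_0(\mathcal{B}^{(1)}_{\chi,\F})$ is, by Lemma~\ref{Lem:K_0_indep}, the pullback of $[(\operatorname{Fr}_{\mathcal{B}_\F})_*\mathcal{O}((p-1)\rho+\lambda')]$, and the tensoring equivalence identifies $K_0(\U^\chi_{\lambda,\F}\operatorname{-mod})$ with $K_0(\Coh(\mathcal{B}_\chi))$ in a way compatible with \eqref{identi}. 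First I would recall that the identification $K_0(\Coh(\mathcal{B}_\chi))\xrightarrow{\sim} H_*(\mathcal{B}_e,\C)$ is manifestly $p$-independent once a base point (a specific coherent sheaf) is chosen, and that by \cite{BM} the classes of the simple $\U^\chi_{\lambda,\F}$-modules are the $q=1$ specializations of the canonical basis $\mathfrak{B}$, which was stated in the excerpt to be independent of $p$. Thus part (1) is essentially a bookkeeping statement: one checks that the chain of isomorphisms, with the \emph{canonical} choice of splitting bundle, carries the simples to a fixed set of classes. The mild subtlety is to verify that the Chern character computation expressing $[\mathcal{V}_{\chi,\F}]$ in terms of tautological classes on $\mathcal{B}_e$ produces an expression whose image in $K_0(\Coh(\mathcal{B}_e))$ (after passing through $[(\operatorname{Fr})_*\mathcal{O}(\dots)]$ and dividing out the rank-$p^{\dim\mathcal{B}}$ factor) is genuinely a $p$-independent element of the lattice.

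For part (2), the strategy is to reduce the dimension of a simple $\U^\chi_{\lambda,\F}$-module to a pairing in $K$-theory. The dimension of a module $M$ equals $\dim_\F \Hom(\U^\chi_{\lambda,\F}, M)$, but more usefully, under the equivalence $M\mapsto R\Gamma(\mathcal{V}_{\chi,\F}\otimes \LL_{\lambda'}^{-1}M)$, the dimension is computed as an Euler characteristic of a coherent sheaf on $T^*\mathcal{B}^{(1)}_\F$, namely $\dim M = \chi\big(T^*\mathcal{B}^{(1)}_\F, \mathcal{V}_{\chi,\F}\otimes \mathcal{F}\big)$ for the corresponding $\mathcal{F}\in \Coh_\chi(T^*\mathcal{B}^{(1)}_\F)$. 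By Riemann--Roch this Euler characteristic is $\int_{T^*\mathcal{B}^{(1)}_\F}\operatorname{ch}(\mathcal{V}_{\chi,\F})\operatorname{ch}(\mathcal{F})\operatorname{td}$, and since $\operatorname{ch}(\mathcal{V}_{\chi,\F})$ involves $(\operatorname{Fr})_*\mathcal{O}((p-1)\rho+\lambda')$ whose Chern character scales with $p$ in a controlled way (the rank contributes $p^{\dim \mathcal{B}}$, and the Frobenius pushforward introduces further powers of $p$ via $\operatorname{Fr}^*$ on cohomology), each term of the integral is a monomial in $p$ times a fixed integral intersection number on $\mathcal{B}_e$. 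Collecting these, $\dim M$ becomes a polynomial in $p$ with coefficients determined by the $p$-independent class of $\mathcal{F}$ — which is exactly the canonical basis element — provided $p+1$ is divisible by $x$ so that $(p-1)\rho+\lambda'$ is an honest weight. Here I would be careful to state that the sheaf $\mathcal{F}$ itself (not just its $K$-theory class) can be taken $p$-independent, or at least that only its class enters the Euler characteristic, which is automatic.

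The main obstacle I anticipate is the second point just raised: the naive Riemann--Roch computation gives the Euler characteristic $\chi(\mathcal{F})$ in terms of $[\mathcal{F}]\in K_0$, which by part (1) is $p$-independent — so this half is clean — but one must also confirm that \emph{higher cohomology vanishes}, i.e.\ that $\dim M$ really equals the Euler characteristic rather than an alternating sum with cancellation, or alternatively argue that polynomiality of the Euler characteristic suffices because $M$ is concentrated in a single degree under the $t$-structure. Since $\LL_{\lambda'}$ is a \emph{derived} equivalence carrying the abelian category $\U_{\lambda,\F}\operatorname{-mod}^\chi$ to a $t$-structure on $D^b_\chi(\Coh)$ that depends only on the $p$-alcove, a simple module goes to an object of that heart, and $R\Gamma$ of the corresponding twisted sheaf is again concentrated in degree $0$ (as $R\Gamma$ on $\mathcal{D}^\lambda_{\mathcal{B}_\F}$-modules is exact and an equivalence); hence the Euler characteristic equals the dimension on the nose. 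So the real content is organizing the Grothendieck--Riemann--Roch bookkeeping and invoking \cite{BMR} for the exactness of $R\Gamma$ and the $p$-independence of the heart; with that in hand both parts follow. I would close by remarking that the congruence condition $x\mid p+1$ is what makes $(p+1)\lambda$, hence $(p-1)\rho+\lambda'$ with $\lambda'=\tilde\lambda$, integral, so the construction of $\LL_{\lambda'}$ and Lemma~\ref{Lem:K_0_indep} apply uniformly, which is precisely the hypothesis in the statement of part (2).
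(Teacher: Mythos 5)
Your argument follows the same route as the paper: part (1) is exactly the $p$-independence of the classes of simples established in \cite[Corollary 5.1.8]{BM} (equivalently, the $p$-independence of the canonical basis $\mathfrak{B}$), and part (2) is the Euler-characteristic/Riemann--Roch computation of \cite[Section 6.2]{BMR} pairing the $p$-independent class of the simple against $[\mathcal{V}_{\chi,\F}]$, which Lemma \ref{Lem:K_0_indep} makes polynomial in $p$. The paper simply cites these two sources where you unfold the details; the logic, including the observation that the Euler characteristic equals $\dim M$ because the image under the derived equivalence is concentrated in degree zero, is the same.
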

\begin{proof}
(1) for general $\lambda$ follows from the case $\lambda=0$ which is (a) in
\cite[Corollary 5.1.8]{BM}. (2) follows easily from (1) and Lemma
\ref{Lem:K_0_indep}, compare to \cite[Section 6.2]{BMR}.
\end{proof}

We now discuss actions of algebras of interest on the above Grothendieck groups.
Recall that $K_0(\operatorname{Coh}^{\C^\times}(\mathcal{B}_e))$ is a module over the affine Hecke
algebra $\mathcal{H}_q(W^a)$. In particular, $K_0(\operatorname{Coh}(\mathcal{B}_e))\cong
H_*(\mathcal{B}_e,\C)$ acquires an action of $W^a$.

As was shown in \cite[Section 2]{BMR_sing}
the latter action is categorified by an action of the affine braid group $B_{aff}$
on $D^b(\U_{\lambda,\F}\operatorname{-mod}^\chi)\cong D^b(\operatorname{Coh}_{\B_\chi}(T^*\B_\F))$,
while the former one is categorified by a compatible action on the derived
category of a {\em graded version} of $\U_{\lambda,\F}\operatorname{-mod}^\chi$,
which is derived equivalent to $\operatorname{Coh}^{{\mathbb{G}}_m}_{\B_\chi}(T^*\B_\F)$
(see \cite[5.3.1, 5.3.2]{BM}).

For future reference we mention a standard property of this action.
For a simple reflection $\alpha$ we let $\tilde{s}_\alpha$ denote the corresponding
generator of the affine braid group.
\begin{Lem}\label{either_or}
For a simple reflection $\alpha$ and an irreducible module $L\in\U_{\lambda,\F}\operatorname{-mod}^\chi$
the object $\tilde{s_\alpha}(L)$ either lies in the abelian category $\U_{\lambda,\F}\operatorname{-mod}^\chi$
or  is isomorphic to $L[1]$.
\end{Lem}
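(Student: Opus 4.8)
The plan is to analyze the action of the wall-crossing braid generator $\tilde{s}_\alpha$ on the irreducible module $L$ at the level of the triangulated category, using the relationship between $\tilde{s}_\alpha$ and the reflection functor. Recall that on $D^b_\chi(\U_{\lambda,\F}\operatorname{-mod})$ the generator $\tilde{s}_\alpha$ is realized, up to a shift, as a cone: there is a distinguished triangle
\[
\Theta_\alpha(L) \to L \to \tilde{s}_\alpha(L) \xrightarrow{+1}
\]
(or the analogous triangle with the roles reversed, depending on the normalization), where $\Theta_\alpha$ is the wall-crossing (translation on/off the $\alpha$-wall) functor. The functor $\Theta_\alpha$ is exact for the abelian $t$-structure on $\U_{\lambda,\F}\operatorname{-mod}^\chi$, being the composition of translation functors which are exact on both sides; in particular $\Theta_\alpha(L)$ is an honest module concentrated in cohomological degree $0$.

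First I would record that, since $L$ is irreducible and $\Theta_\alpha$ is self-adjoint and exact, the natural adjunction map $\Theta_\alpha(L) \to L$ is either surjective or zero: its image is a submodule of the simple $L$. Correspondingly, the cone $\tilde{s}_\alpha(L)$ sits in a two-term complex $[\Theta_\alpha(L) \to L]$ placed in degrees $-1$ and $0$ (say), so a priori it has cohomology only in degrees $-1$ and $0$. The key dichotomy comes from looking at these two cohomology modules: $H^0(\tilde{s}_\alpha(L))$ is the cokernel of $\Theta_\alpha(L)\to L$ and $H^{-1}(\tilde{s}_\alpha(L))$ is its kernel. Now I would invoke the braid relation: $\tilde{s}_\alpha$ is invertible, and $\tilde{s}_\alpha^2$ is given by a similar cone construction whose total effect on any object is (after the standard computation with $\Theta_\alpha$, using $\Theta_\alpha^2 \cong \Theta_\alpha \oplus \Theta_\alpha$ up to shift) controlled. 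Since $\tilde{s}_\alpha$ preserves irreducibility up to the triangulated structure — more precisely, $\tilde{s}_\alpha$ is a derived auto-equivalence — the object $\tilde{s}_\alpha(L)$ must be indecomposable in $D^b$.

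An indecomposable object with cohomology concentrated in two adjacent degrees is either concentrated in a single degree, or is a genuine non-split two-term complex. I would rule out the genuinely-spread-out case, or rather pin down exactly when each happens, by the following case analysis. If the adjunction map $\Theta_\alpha(L)\to L$ is zero, then $\tilde{s}_\alpha(L) \cong \Theta_\alpha(L)[1] \oplus L$, which contradicts indecomposability unless $\Theta_\alpha(L) = 0$; but $\Theta_\alpha(L) = 0$ forces $\tilde{s}_\alpha(L) \cong L[1]$ — one of the two alternatives. If instead $\Theta_\alpha(L) \to L$ is surjective with kernel $N$, then $\tilde{s}_\alpha(L) \cong [N[1] \to \text{(nothing)}]$... more carefully, the cone of a surjection $\Theta_\alpha(L)\twoheadrightarrow L$ is $N[1]$, so $\tilde{s}_\alpha(L) \cong N[1]$, which lies in the abelian category shifted by $1$. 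To see it actually lies in the abelian category (unshifted), I would use that $\tilde{s}_\alpha$ has an inverse of the same type: applying $\tilde{s}_\alpha^{-1}$ and running the same argument, or more directly using that $\tilde{s}_\alpha$ maps the canonical $t$-structure to a perverse $t$-structure that is a tilt of the standard one along the single wall $\alpha$ (this is exactly the content of the $\mathfrak{sl}_2$-geometry of the wall, and is implicit in the perverse-equivalence statements from \cite{ABM} recalled above). Along a codimension-one wall, a perverse tilt moves each simple either to itself (in degree $0$) or by a shift; that is the assertion.

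The main obstacle, I expect, is being careful about which normalization of $\tilde{s}_\alpha$ is in force and hence in which cohomological degrees the cone of the adjunction sits — a sign/shift bookkeeping issue — and, relatedly, justifying rigorously that $\tilde{s}_\alpha(L)$ is \emph{indecomposable}. The cleanest route to indecomposability is: $\tilde{s}_\alpha$ is an equivalence, so it sends the indecomposable $L$ to an indecomposable object; then an indecomposable complex with two-step cohomology supported in degrees $-1,0$ whose degree-$0$ part is a quotient of $L$ and whose $(-1)$-part is a sub of $L$ must, because $L$ is \emph{simple}, have one of the two parts vanish (any nonzero quotient of $L$ is $L$, any nonzero sub is $L$, and a non-split extension class in $\operatorname{Ext}^1$ between $L$'s worth of data would have to be analyzed — but $H^0$ and $H^{-1}$ being each either $0$ or $L$, plus indecomposability, leaves precisely the two listed possibilities). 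I would present this case check as the heart of the argument and relegate the translation-functor exactness facts to citations of \cite{BMR_sing} and the standard category-$\mathcal{O}$ references already cited.
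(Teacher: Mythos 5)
Your overall strategy (analyze the cone of the adjunction map between $L$ and the reflection functor applied to $L$, using simplicity of $L$) is the right one and is the paper's, but you have the adjunction map pointing the wrong way, and this is not mere bookkeeping: it is exactly where your argument fails to close. The paper uses the triangle $L \to \Xi_\alpha(L) \to \tilde s_\alpha(L)$, i.e.\ the \emph{unit} $L \to T_{\mu\to\lambda}T_{\lambda\to\mu}(L)$. Under adjunction the unit corresponds to $\operatorname{id}_{T_{\lambda\to\mu}(L)}$, so it vanishes iff $T_{\lambda\to\mu}(L)=0$ iff $\Xi_\alpha(L)=0$, in which case the cone is $L[1]$; otherwise the unit is a nonzero map out of the simple $L$, hence \emph{injective}, and the cone of an injection of modules is the cokernel, an honest object of the heart. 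With your triangle $\Theta_\alpha(L)\to L\to \tilde s_\alpha(L)$ (the counit), nonvanishing gives \emph{surjectivity}, and the cone of a surjection is $\ker[1]$ — an object sitting in the wrong degree, as you yourself notice. Your attempted repairs do not work: the appeal to ``$\tilde s_\alpha$ maps the standard $t$-structure to a one-wall tilt, and a tilt moves each simple either to itself or by a shift'' is essentially a restatement of the lemma being proved, so it is circular; and ``apply $\tilde s_\alpha^{-1}$ and run the same argument'' is not carried out and would face the same degree problem. There is also an internal inconsistency in your zero case: with your complex $[\Theta_\alpha(L)\to L]$ in degrees $-1,0$, vanishing of $\Theta_\alpha(L)$ yields $\tilde s_\alpha(L)\cong L$, not $L[1]$ as you assert — another symptom that you are implicitly computing with the inverse generator.

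Two smaller points. First, the indecomposability discussion is unnecessary once the triangle is set up correctly: the zero/nonzero dichotomy for the unit already splits cleanly into the two cases, with no split summand to rule out. Second, the paper first embeds $\U_{\lambda,\F}\operatorname{-mod}^\chi$ into the generalized-central-character category $\U_{\F}\operatorname{-mod}^\chi_\lambda$, where the translation functors and the compatible $B_{aff}$-action of \cite{BR} actually live; your proof should record this reduction rather than apply $T_{\lambda\to\mu}$ directly on the genuine-central-character category.
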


\begin{proof}
Consider the full embedding $\U_{\lambda,\F}\operatorname{-mod}^\chi\to
\U_{\F}\operatorname{-mod}^\chi_\lambda$, where the target category
consists of all $\g_\F$-modules where the kernel of the central ideal corresponding to
$(\lambda,\chi)$ acts nilpotently.
By \cite[Theorem 1.3.1]{BR}, we have a compatible $B_{aff}$ action on $D^b(\U_{\F}\operatorname{-mod}^\chi_\lambda)$,
and it suffices to check the same statement in $D^b(\U_{\F}\operatorname{-mod}^\chi_\lambda)$.
We have the exact reflection functor $\Xi_\alpha$ acting on
$\U_{\F}\operatorname{-mod}^\chi_\lambda$ and an exact triangle
$$L\mapsto \Xi_\alpha(L)\to \tilde s_\alpha(L).$$
Recall that $\Xi_\alpha=T_{\mu\to \lambda} \circ T_{\lambda\to \mu}$ is a composition
of two biadjoint translation functors for a weight $\mu$ on the $\alpha$-wall.
If $T_{\lambda\to \mu}(L)=0$ then $\Xi_\alpha(L)=0$ and $\tilde s_\alpha(L)\cong L[1]$.
If $T_{\lambda\to \mu}(L)\ne 0$, then the adjunction arrow $L\to T_{\mu\to \lambda} \circ T_{\lambda\to \mu}
(L)$ is nonzero, hence it is injective provided that $L$ is irreducible. Thus in this case
$\tilde s_\alpha(L)\cong \Xi_\alpha(L)/L$ is concentrated in homological degree zero.
\end{proof}

 \begin{Rem}
 It is natural to expect that the aforementioned action of 
 the affine braid group on
 the derived categories of coherent sheaves factors through the
 standard categorification of the  affine Hecke algebra; the latter
 can be defined either using constructibe sheaves on the affine
 flag variety, or using the theory of Soergel bimodules.
 For a base field of characteristic zero this follows from the main
 result of \cite{B}, see also \cite{BY} for the relation to Soergel bimodules.
 For a base field of positive characteristic (which is the setting
 related to $\g$-modules in positive characteristic as explained above)
 this question is still open, to the authors' knowledge.
 \end{Rem}

\section{Lengths}
This section contains a number of results that will be used to prove Theorem
\ref{Thm:main}.

\subsection{Reduction of HC bimodules to characteristic $p$}\label{SS_HC_red_mod_p}
The proof of Proposition \ref{Prop:char_p_length} will be  based on considering
reductions of HC bimodules to characteristic $p$.

Let us start by discussing $R$-forms of Harish-Chandra bimodules. The category
of HC bimodules is defined over $\Q$, the Bernstein-Gelfand equivalence
shows that $\,^\infty_{\lambda}\HC^1_{\lambda}$ is split over $\Q$ because the category
$\mathcal{O}$ is split over the rationals. Recall that an abelian category equivalent to
a category of modules over a finite dimensional algebra over a field is called split if the endomorphism
algebras  of all simples coincide with the field.

Clearly, there is a finite localization $R$ of $\Z$ such that the tensor category $D^b_{HC}(\U_\lambda\operatorname{-bimod})$
is defined over $R$. All simples are defined over $R$ as well, let us fix some $R$-lattices $\M_{w,R},
w\in W_{[\lambda]}$. Note that we can still talk about HC $\U_{\lambda,R}$-bimodules: these are bimodules $\M$
that admit a bounded from below good filtration (such that the left and the right actions of
$R[\mathcal{N}]$ on $\gr\M$ coincide and the $R[\mathcal{N}]$-module $\gr\M$ is finitely generated
-- here $\mathcal{N}$ stands for the nilpotent cone of $\g$). In particular, every HC $\U_{\lambda,R}$-
(or $\U_R$-) bimodule becomes flat over $R$ after a finite localization. Note also that  any Tor of any two HC $\U_{\lambda,R}$-bimodules is again HC.

For a primitive ideal $\J\subset \U_\lambda$ we set $\J_R:=\U_{\lambda,R}\cap \J$.

Let $V=V(\mu)$ denote the irreducible
$G$-module with highest weight $\mu$.  For $m\in \Z_{>0}$, we write
$$V^m:=\bigoplus_{\mu| \langle\rho^\vee,\mu\rangle\leqslant m}V(\mu)^{\dim V(\mu)}.$$

We will also impose the following conditions that we can achieve by a finite
localization of $R$ (in (c3),(c4) we fix $m$ and then further localize $R$).
Here (c2) follows from Corollary \ref{Cor:tens_prod_filtration}, that is an analogous statement over $\C$.

\begin{itemize}
\item[(c1)] For every distinguished involution $d$, we have an inclusion $\M_{d,R}\hookrightarrow
\U_{\lambda,R}/\J_{d,R}$ and the quotient is filtered by bimodules $\M_{w,R}$ with $w<^L d$.
\item[(c2)] For every $w_1,w_2\in \dcell$, there are $\U_{\lambda,R}$-subbimodules
$M_1\subset M_2\subset \M_{w_1,R}\otimes_{\U_{R}}\M_{w_2,R}$ such that
\begin{itemize}
\item
both $M_1$ and $\M_{w_1,R}\otimes_{\U_{\lambda,R}}\M_{w_2,R}/M_2$ are filtered by
$\M_{w,R}$'s, where $w$ lie in cells strictly less then $\dcell$
\item and $M_2/M_1$ is isomorphic to the direct sum of $\M_{w,R}$'s for $w\in \dcell$.
\end{itemize}
\item[(c3)] Both $\operatorname{pr}_\lambda(V^m\otimes \M_w)$ and its complement in $V^m\otimes \M_w$
are defined over $R$ for all $w\in W_{[\lambda]}$.
\item[(c4)] $\operatorname{pr}_\lambda(V^m\otimes \M_w)_R$ is filtered by $\M_{w',R}$'s.
Moreover,  there is a quotient of  $\operatorname{pr}_\lambda(V^m\otimes \M_w)_R$
isomorphic to the direct sum of $\M_{w,R}$'s that gives
$\mathsf{head}(\operatorname{pr}_\lambda(V^m\otimes \M_w))$
after base change to $\C$ (recall that by the head we mean the maximal semisimple
quotient).
\item[(c5)] The wall-crossing bimodules are defined over $R$ and define a homomorphism
$\operatorname{Br}_{W_{[\lambda]}}\rightarrow D^b_{HC}(\U_{\lambda,R}\operatorname{-bimod})$.
\item[(c6)] All $\operatorname{Tor}^i_{\U_{\lambda,R}}(\M_{w_1,R},\M_{w_2,R})$
are filtered by $\M_{w,R}$'s (note that after a finite localization of $R$ only finitely
many of these Tor's are nonzero because $\U_\lambda$ has finite homological dimension). The analogous result is true for
$\operatorname{Tor}^i_{\U_{\lambda,R}}(\M_{w_1,R},\U_{\lambda,R}/\J_{w_2,R})$.
\end{itemize}

Now let $\F$  be an algebraically closed field and an  $R$-algebra.  We have an action of $D^b_{HC}(\U_{\lambda,R}\operatorname{-bimod})$ on $D^b(\U_{\lambda,\F}\operatorname{-mod}^\chi)$ that gives rise to an action of $W_{[\lambda]}$ on $K_0(\U^\chi_{\lambda,\F}\operatorname{-mod})$.

%
%

\begin{Lem}\label{Lem:action_compat}
Under the identification $K_0(\operatorname{Coh}(\mathcal{B}_\chi))\cong K_0(\U_{\lambda,\F}\operatorname{-mod}^\chi)$,
the two actions of $W_{[\lambda]}$ (i.e., the one defined above and the one restricted from the
$W^a$-action in Section \ref{SS_loc_char_p}) coincide.
\end{Lem}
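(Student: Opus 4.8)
The statement compares two actions of $W_{[\lambda]}$ on $K_0(\U_{\lambda,\F}\operatorname{-mod}^\chi) \cong K_0(\operatorname{Coh}(\mathcal{B}_\chi))$: one coming from tensoring with (reductions mod $p$ of) Harish-Chandra bimodules over $\U_{\lambda,R}$, the other restricted from the $\mathcal{H}_q(W^a)$-module structure on $K_0(\operatorname{Coh}^{\C^\times}(\mathcal{B}_e))$ specialized at $q=1$. Since $W_{[\lambda]}$ is a Coxeter group generated by the simple reflections $s_\alpha$ fixing the walls through $\lambda^\circ$, it suffices to check that the two actions agree on these generators. So the plan is to reduce everything to a single simple reflection $\alpha$ and compare the two operators on $K_0$.

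First I would recall that, on the HC-bimodule side, the generator $s_\alpha$ acts via the class of the wall-crossing bimodule $\mathsf{WC}_{s_\alpha}$ (more precisely, via $[\mathsf{WC}_{s_\alpha}] - [\U_\lambda]$, the standard normalization so that $s_\alpha \mapsto c_{s_\alpha}$ at $q=1$): this is the content of the discussion in Section \ref{SS_Hecke_HC} identifying $K_0(\HC(\U_\lambda))$ with $\C W_{[\lambda]}$ with the simple reflections realized by wall-crossings. On the geometric side, the generator $\tilde s_\alpha$ of $B_{aff}$ (reducing to the generator of $\mathcal{H}_q(W^a)$ at $q=1$) is realized, via \cite[Section 2]{BMR_sing}, by an explicit kernel on $T^*\mathcal{B}_\F \times T^*\mathcal{B}_\F$; under the derived localization equivalence $\LL_{\lambda'}$ of Lemma \ref{Lem:K_0_indep} it corresponds to a functor on $D^b(\U_{\lambda,\F}\operatorname{-mod}^\chi)$. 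The key point is that, by the construction in \cite{BMR_sing} (and as already used in the proof of Lemma \ref{either_or} above), this geometric wall-crossing functor is identified with the composition of translation functors $\Xi_\alpha = T_{\mu\to\lambda}\circ T_{\lambda\to\mu}$ across the $\alpha$-wall, shifted so that it matches the braid generator. But the translation functors $T_{\lambda\to\mu}$, $T_{\mu\to\lambda}$ are exactly tensoring (over $\U_{\lambda,\F}$ resp.\ $\U_{\mu,\F}$) with reductions mod $p$ of the corresponding HC bimodules over $R$ — indeed the wall-crossing bimodule $\mathsf{WC}_{s_\alpha}$ over $\U_{\lambda,R}$ is by definition the composition of these two translation bimodules, and the action of $D^b_{HC}(\U_{\lambda,R}\operatorname{-bimod})$ on $D^b(\U_{\lambda,\F}\operatorname{-mod}^\chi)$ is precisely derived tensor product of the reduction. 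Hence both operators on $K_0$ are induced by the same endofunctor up to the normalization shift, and they agree.

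The main obstacle is matching the two constructions at the level of kernels rather than just $K_0$-classes: one must verify that the affine braid generator $\tilde s_\alpha$ acting geometrically on $D^b(\operatorname{Coh}_{\B_\chi}(T^*\B_\F))$ transports, under $\LL_{\lambda'}$, to the translation-functor composition $\Xi_\alpha$ (with the correct homological shift). This is essentially the compatibility built into \cite[Section 2]{BMR_sing} together with Lemma \ref{either_or}, but pinning down the normalization — i.e.\ that $[\mathsf{WC}_{s_\alpha}] - [\U_\lambda]$ on the bimodule side matches $-1 + \tilde s_\alpha$ (or the appropriate sign/shift) on the Hecke side, so that both send $s_\alpha$ to $c_{s_\alpha}|_{q=1}$ — requires care. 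Once this normalization is fixed consistently on both sides for each simple reflection $\alpha$, the two $W_{[\lambda]}$-actions coincide on generators and hence on all of $W_{[\lambda]}$; since both are genuine group actions (the braid relations hold on both sides, on the left by \cite{BMR_sing} and on the right because $K_0(\HC(\U_\lambda)) = \C W_{[\lambda]}$), the lemma follows. I would close by remarking that it is enough to check the identity after base change to a field $\F$ of large characteristic, since the $K_0$'s and the actions of interest are all defined over $R$ and the comparison is between $\Z$-linear (indeed $\C$-linear after extension) operators.
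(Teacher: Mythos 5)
Your proposal is correct and follows essentially the same route as the paper: reduce to the simple reflections of $W_{[\lambda]}$, identify the restricted $W^a$-action generators with the wall-crossing functors (the paper cites \cite[Section 5.4]{Riche} for this), and then invoke \cite[Theorem 2.1.4]{BMR_sing} to identify those functors with derived tensor products by the wall-crossing bimodules. The extra care you devote to the normalization is reasonable but not something the paper dwells on, as it is absorbed into the cited results.
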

\begin{proof}
The $W^a$-action on $K_0(\operatorname{Coh}(\mathcal{B}_\chi))$ corresponds to the action
on $K_0(\U_{\lambda,\F}\operatorname{-mod}^\chi)$ by the wall-crossing functors, \cite[Section 5.4]{Riche}.
By \cite[Theorem 2.1.4]{BMR_sing}, the wall-crossing functors through the walls defined by the simple
roots for $W_{[\lambda]}$ are given by taking the derived tensor products with the wall-crossing bimodules.
\end{proof}

Below we are also going to use the following lemma.

\begin{Lem}\label{Lem:HC_der_prod}
Let $L$ be a simple $\U_{\lambda,\F}^\chi$-module and let $d$ be a distinguished involution
such that $\J_{d}$ is a maximal primitive ideal with $\J_{d,R}L=0$. Then the following is true:
\begin{enumerate}
\item For any $w\in W$, all simple constituents of $\operatorname{Tor}_*^{\U_{\lambda,R}}(\M_{w,R},L)$
are annihilated by $\J_{w,R}$.
\item Let $L'$ be a simple constituent of $\operatorname{Tor}_*^{\U_{\lambda,R}}(\M_{w,R},L)$
and let $\J_{d'}$ be a maximal primitive ideal such that $\J_{d',R}L'=0$. Then the two-sided
cell of $d'$ is less than or equal to that of $d$.
\item Assume, in addition, that $d\not\sim^Lw^{-1}$. Let $L'$ be a simple constituent
of $\operatorname{Tor}_*^{\U_{\lambda,R}}(\M_{w,R},L)$ and $d'\in W$ be as in (2). Then $d'<^L w$.
\end{enumerate}
\end{Lem}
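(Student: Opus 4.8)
The plan is to get (1) formally from the one-sided annihilator of $\M_w$, and to reduce (2) and (3) to a $\operatorname{Tor}$ computation among Harish--Chandra bimodules, using (c6), (c1) and the associated-variety inclusion for $\operatorname{Tor}$.

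\emph{Part (1).} Since $\J_w$ is the left annihilator of the simple HC bimodule $\M_w$, the ideal $\J_{w,R}$ annihilates $\M_{w,R}$ from the left. Picking a projective resolution $P_\bullet\twoheadrightarrow L$ over $\U_{\lambda,R}$, the complex $\M_{w,R}\otimes_{\U_{\lambda,R}}P_\bullet$ computing $\operatorname{Tor}^{\U_{\lambda,R}}_*(\M_{w,R},L)$ consists of modules on which the left $\U_{\lambda,R}$-action factors through $\U_{\lambda,R}/\J_{w,R}$; hence so does every homology group, hence so does each of its simple constituents.

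\emph{Parts (2)--(3).} Using $\J_{d,R}L=0$ and associativity of the derived tensor product, rewrite
\[
\M_{w,R}\otimes^{\mathbf L}_{\U_{\lambda,R}}L\;\cong\;\bigl(\M_{w,R}\otimes^{\mathbf L}_{\U_{\lambda,R}}\U_{\lambda,R}/\J_{d,R}\bigr)\otimes^{\mathbf L}_{\U_{\lambda,R}/\J_{d,R}}L .
\]
By (c6) the cohomology bimodules of $N:=\M_{w,R}\otimes^{\mathbf L}_{\U_{\lambda,R}}\U_{\lambda,R}/\J_{d,R}$ are filtered by simple HC bimodules $\M_{w'',R}$. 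As $N$ has left annihilator containing $\J_w$, so does each $\M_{w''}$; and the inclusion $\VA(\operatorname{Tor}^{\U}_i(\M_1,\M_2))\subset\VA(\M_1)\cap\VA(\M_2)$ forces $\VA(\M_{w''})\subset\VA(\M_w)\cap\overline{\Orb_\dcell}$, where $\dcell$ is the two-sided cell of $d$. Tensoring $N$ over $\U_{\lambda,R}/\J_{d,R}$ with $L$ and invoking the left-annihilator bound again, every simple constituent $L'$ of $\operatorname{Tor}^{\U_{\lambda,R}}_*(\M_{w,R},L)$ is killed by some $\J_{w'',R}$ of this kind. Since $\J_{d'}$ is a maximal primitive ideal killing $L'$ it contains such a $\J_{w''}$, so $\overline{\Orb_{\dcell(d')}}=\VA(\U_\lambda/\J_{d'})\subset\VA(\U_\lambda/\J_{w''})=\overline{\Orb_{\dcell(w'')}}\subset\overline{\Orb_\dcell}$; this is (2). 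For (3), the hypothesis $d\not\sim^L w^{-1}$ should be used to make every occurring $w''$ satisfy $w''<^L w$ strictly, whence $d'<^L w$. The mechanism: the ``diagonal'' piece $\M_{w,R}\otimes^{\mathbf L}_{\U_{\lambda,R}}\M_{d,R}$, coming from the socle of $\U_{\lambda,R}/\J_{d,R}$ in (c1), has class $[\M_w][\M_d]$ in $K_0(\HC(\U_\lambda))=\C W_{[\lambda]}$, whose component in $\Jalg_\dcell(W_{[\lambda]})=K_0(\HC^{ss}_\dcell(\U_\lambda))$ is a product of $t$-basis elements that, by Lusztig's multiplication rule, vanishes unless $w^{-1}\sim^L d$; the remaining pieces $\M_{w',R}$ of $\U_{\lambda,R}/\J_{d,R}$ have $\dcell(w')<\dcell$ and contribute nothing in cell $\dcell$ either, so all occurring $\M_{w''}$ are confined to strictly smaller cells and to a strictly smaller position in the left order than $\M_w$.

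\emph{Main obstacle.} The delicate step in (2)--(3) is the passage from ``$L'$ is killed by some controlled $\J_{w'',R}$'' to the stated inequality for the \emph{maximal} primitive ideal $\J_{d'}$ killing $L'$: this needs that for a simple $\U_{\lambda,\F}^\chi$-module the maximal primitive ideal of $\U_\lambda$ whose reduction annihilates it is essentially unique, and that its associated variety equals the reduction mod $p$ of the char-$0$ one, so that $\VA(\U_\lambda/\J_{d'})$ genuinely computes $\overline{\Orb_{\dcell(d')}}$ after base change. These have to be supplied from \cite{BMR,BM} and the good-filtration formalism of Section \ref{SS_HC_red_mod_p}, not from Theorem \ref{Thm:main}, in order to keep the argument non-circular; a secondary point is obtaining the strictness in (3) uniformly over the whole filtration of $\U_{\lambda,R}/\J_{d,R}$, for which one checks that $w^{-1}\not\sim^L d$ still prevents a cell-$\dcell$ contribution after $\M_d$ is replaced by the lower $\M_{w'}$.
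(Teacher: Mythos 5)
Your part (1) and the overall strategy for (2)--(3) -- the associativity rewriting $\M_{w,R}\otimes^{L}L=\bigl(\M_{w,R}\otimes^{L}\U_{\lambda,R}/\J_{d,R}\bigr)\otimes^{L}_{\U_{\lambda,R}/\J_{d,R}}L$ followed by (c6) -- are exactly the paper's. However, there is a genuine gap in your proof of (2). You control the constituents $\M_{w''}$ of $\M_{w,R}\otimes^{L}\U_{\lambda,R}/\J_{d,R}$ only through their \emph{associated varieties}, via $\VA(\operatorname{Tor}_i(\M_w,\U_\lambda/\J_d))\subset\VA(\M_w)\cap\overline{\Orb}_{\dcell}$, and you then conclude the two-sided cell inequality from the resulting containment of orbit closures. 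That last step fails: the assignment $\dcell\mapsto\Orb_{\dcell}$ is neither injective nor order-reflecting (the paper itself notes that even equality of associated varieties implies equality of cells only for integral $\lambda$; for non-integral $\lambda$, where $W_{[\lambda]}$ is a proper parabolic of $W^a$, several incomparable cells can share an orbit, and nested orbits need not correspond to comparable cells). Since statement (2) is genuinely about the cell order -- and is used later precisely to show that the cell filtration on $\U_{\lambda,\F}\operatorname{-mod}^\chi$ is preserved -- the orbit containment you prove is strictly weaker than what is asserted.

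The missing ingredient, which is what the paper uses, is the \emph{right} annihilator: computing $\M_{w,R}\otimes^{L}\U_{\lambda,R}/\J_{d,R}$ from a free resolution of $\M_{w,R}$ as a right module shows its homology is killed by $\J_{d,R}$ on the right, so every constituent $\M_{w''}$ satisfies $w''^{-1}\leqslant^L d$ in addition to $w''\leqslant^L w$. The condition on $w''^{-1}$ bounds the two-sided cell of $w''$ by that of $d$ directly in the cell order, and combined with the maximality of $\J_{d'}$ this yields (2) as stated. Your mechanism for (3) (only the socle $\M_d$ of $\U_{\lambda,R}/\J_{d,R}$ can contribute in the top cell, and the $\Jalg$-structure constants force $w^{-1}\sim^L d$ for such a contribution) matches the paper's intent, but it too should be run through the two constraints $w''\leqslant^L w$, $w''^{-1}\leqslant^L d$ rather than through associated varieties. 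Your closing remarks correctly flag the uniqueness of the maximal annihilating primitive ideal as a point needing care, but that is a shared subtlety with the paper's own argument rather than a defect specific to yours.
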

\begin{proof}
Let us take a resolution of $L$ by free $\U_{\lambda,R}$-modules:
$$\ldots\rightarrow\U_{\lambda,R}^{\oplus n_2}\rightarrow \U_{\lambda,R}^{\oplus n_1}\rightarrow L\rightarrow 0.$$
Then $\operatorname{Tor}_*^{\U_{\lambda,R}}(\M_{w,R},L)$ is the homology of the complex
$$\ldots\rightarrow\M_{w,R}^{\oplus n_2}\rightarrow \M_{w,R}^{\oplus n_1}\rightarrow 0.$$
The individual terms of this complex are annihilated by $\J_{w,R}$ hence so is the homology.
This proves (1).

Let us prove (2). Since $\J_{d,R}$ annihilates $L$, we have $$\M_{w,R}\otimes^L_{\U_{\lambda,R}}L=
\left(\M_{w,R}\otimes^L_{\U_{\lambda,R}}(\U_{\lambda,R}/\J_{d,R})\right)\otimes^L_{\U_{\lambda,R}/\J_{d,R}}L.$$
Note that all simples $\M_{w'}$ appearing in $\operatorname{Tor}^*_{\U_\lambda}(\M_w,\U_\lambda/\J_d)$ satisfy
$w'\leqslant^L w, w'^{-1}\leqslant^L d$. This, together with (c6), implies (2). In (3), since $d\not\sim^L w^{-1}$,
we have $w'<^L w$. (3) follows.
\end{proof}

\subsection{Results on growth of lengths}
Recall that $\lambda\in \h^*_{\Q}$ is regular and $p$ is large enough, in particular, $\lambda$
is well-defined and regular mod $p$. Let $L$ be a simple in $\U_{\lambda,\F}^\chi\operatorname{-mod}$.

Given $m$, we always choose $p$ large enough for $V(\mu)$ to be irreducible mod $p$
provided $\langle\rho^\vee,\mu\rangle\leqslant m$ and $V^m_\F\otimes V^m_\F$ to be
semisimple.

Let us write $\operatorname{pr}_\lambda$ for the  projection $\U_\F\operatorname{-mod}^{\chi}\twoheadrightarrow
\U_\F\operatorname{-mod}_{\lambda}^\chi$. For a module $M\in \U_\F\operatorname{-mod}^\chi$,
let us write $\ell(M)$ for its length. We want to understand the behavior of the length  $\ell(\operatorname{pr}_\lambda(V^m_\F\otimes L))$ as a function of $m$.

\begin{Prop}\label{Prop:char_p_length}
Let $L$ be an irreducible $\U^\chi_{\lambda,\F}$-module. Let $\J\subset \U_\lambda$ be a maximal  primitive ideal
with the following property: $L$ is annihilated by $\J_R$. Let $\overline{\Orb}$ be the associated variety
of $\U_\lambda/\J$. Then there are real numbers  $c,C$
with $0<c<C$ such that for all $m\in \Z_{>0}$ and $p\gg m$ we have
$$c<\frac{\ell(\operatorname{pr}_\lambda(V^m_{\F}\otimes L))}{m^{\dim \Orb}}<C.$$
\end{Prop}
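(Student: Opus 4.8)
The plan is to estimate $\ell(\operatorname{pr}_\lambda(V^m_\F\otimes L))$ both from above and from below by comparing with the characteristic-zero situation, using the reduction mod $p$ machinery of Section \ref{SS_HC_red_mod_p}. The key point is that translation bimodules $\operatorname{pr}_\lambda(V^m\otimes\U_\lambda)$ are Harish-Chandra $\U_\lambda$-bimodules, their simple subquotients are the $\M_w$, and the multiplicities with which the various $\M_w$ occur are governed — in the limit $m\to\infty$ — by the dimension of the associated variety. More precisely, the multiplicity of a fixed simple $\M_w$ in $\operatorname{pr}_\lambda(V^m\otimes\M_{d})$ grows like $m^{\dim\Orb_\dcell}$ (up to bounded constants) when $w$ lies in the two-sided cell $\dcell$ of $d$, because $\operatorname{pr}_\lambda(V^m\otimes\bullet)$ acts on $K_0(\HC^{ss}_\dcell(\U_\lambda))=\Jalg_\dcell(W_{[\lambda]})$ and the growth of the character values of the tautological $\Jalg_\dcell$-module is controlled by $\dim\Orb_\dcell$, which in turn is $\dim V(\mu)^2\sim$ a polynomial of the right degree summed over $\langle\rho^\vee,\mu\rangle\le m$. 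This is the Harish-Chandra-bimodule incarnation of the classical Bernstein-inequality / Joseph-Gelfand-Kirillov dimension computation.

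The passage to characteristic $p$ goes via the action of $D^b_{HC}(\U_{\lambda,R}\operatorname{-bimod})$ on $D^b(\U_{\lambda,\F}\operatorname{-mod}^\chi)$. First I would establish the \emph{upper bound}: since $\J_R$ annihilates $L$ and $\J\supseteq\J_d$ for a suitable distinguished involution $d$ with $\Orb_\dcell=\overline\Orb$, we have $\operatorname{pr}_\lambda(V^m\otimes L)$ is a subquotient (via the surjection $\operatorname{pr}_\lambda(V^m\otimes\U_\lambda)\to\operatorname{pr}_\lambda(V^m\otimes(\U_\lambda/\J_d))$ and flatness after localizing $R$) of the reduction mod $p$ of $\operatorname{pr}_\lambda(V^m\otimes\U_\lambda/\J_d)$ tensored over $\U_{\lambda,R}/\J_{d,R}$ with $L$. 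Using (c3), (c4) and the fact that Tor's are again HC (hence filtered by $\M_{w,R}$'s with controlled $w$), the length of the reduction is bounded by the sum of the multiplicities of the $\M_{w,R}$ in $\operatorname{pr}_\lambda(V^m\otimes\U_\lambda/\J_d)_R$ times $\ell(\M_{w,\F}\otimes^L_{\U_{\lambda,R}}L)$; the latter are bounded uniformly in $m$ (they depend only on $w$, of which there are finitely many), and the former is $O(m^{\dim\Orb})$ by the characteristic-zero computation. For the \emph{lower bound} I would use (c1): $\M_{d,R}\hookrightarrow\U_{\lambda,R}/\J_{d,R}$ with quotient filtered by strictly smaller $\M_{w,R}$. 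Then $\operatorname{pr}_\lambda(V^m\otimes\M_d)_R\otimes^L_{\U_{\lambda,R}}L$ receives a map whose image, after reduction mod $p$, contains enough simple constituents: here one uses Lemma \ref{Lem:HC_der_prod}(1),(2) to see that the constituents coming from the genuinely-top cell $\dcell$ survive and are not cancelled by the lower-order terms, plus the fact that $\operatorname{head}(\operatorname{pr}_\lambda(V^m\otimes\M_d))$ contains $\M_d^{\oplus N_m}$ with $N_m\sim m^{\dim\Orb}$ by (c4). Applying the exact functor $\bullet\otimes^L_{\U_{\lambda,R}}L$ and counting simple constituents mod $p$ gives $\ell\geq c\,m^{\dim\Orb}$.

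The main obstacle I anticipate is controlling the lower bound: one needs to know that the $N_m\sim m^{\dim\Orb}$ copies of $\M_d$ in the head of $\operatorname{pr}_\lambda(V^m\otimes\M_d)$ produce, after applying $\bullet\otimes^L_{\U_{\lambda,R}}L$ and reducing mod $p$, at least $\gtrsim N_m$ distinct simple $\U^\chi_{\lambda,\F}$-constituents rather than having massive cancellation in the alternating sum of Tor's. For this it is crucial that $\J_d$ is a \emph{maximal} primitive ideal annihilating $L$: this guarantees (via Lemma \ref{Lem:HC_der_prod}(3) applied to the non-top pieces $w<^L d$ of $\U_\lambda/\J_d$, and via the socle description $\M_d=\operatorname{soc}(\U_\lambda/\J_d)$ with $\VA((\U_\lambda/\J_d)/\M_d)\subset\partial\Orb$) that the contribution of $\M_d\otimes^L L$ in degree zero is nonzero and its class in $K_0$ is not killed by the corrections from the lower cells, which carry strictly smaller associated variety and hence strictly smaller $m$-growth. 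So the numerical separation $\dim\Orb_{\dcell'}<\dim\Orb_\dcell$ for $\dcell'<\dcell$ does the bookkeeping: the "error terms" are $o(m^{\dim\Orb})$, while the main term is $\asymp m^{\dim\Orb}$, and for $p\gg m$ reduction mod $p$ preserves these inequalities because all the relevant multiplicities and lengths are eventually constant in $p$. I would organize the final count as a comparison in $K_0(\U^\chi_{\lambda,\F}\operatorname{-mod})$, where by Proposition \ref{Cor:simples_char_p}(1) and Lemma \ref{Lem:action_compat} the $W_{[\lambda]}$-action and the bimodule action are the same and independent of $p$, reducing everything to a finite, $p$-independent linear-algebra computation whose asymptotics in $m$ are exactly the characteristic-zero ones.
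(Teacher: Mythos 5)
Your proposal follows essentially the same route as the paper: reduce to the characteristic-zero growth estimate for $\ell(\operatorname{pr}_\lambda(V^m\otimes \M))$ (Proposition \ref{Prop:HC_length}), get the upper bound from a $p$-independent bound on $\ell(\M_{w,\F}\otimes_{\U_{\F}}L)$ via the $K_0$-level bookkeeping, and get the lower bound from the nonvanishing of $\M_{d,R}\otimes_{\U_R}L$, proved exactly as you indicate from the maximality of $\J$, the socle description of $\U_\lambda/\J_d$ in (c1), and Lemma \ref{Lem:HC_der_prod}(3), combined with the head estimate of Lemma \ref{Lem:length_head} and (c4). The paper packages these two bounds as Lemma \ref{Lem:product_length}, and works with underived tensor products of the head (right-exactness) rather than tracking cancellations in the derived tensor product, but the ingredients and logic coincide with yours.
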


We will deduce this from an analogous result for Harish-Chandra bimodules. Namely, let $\mathcal{M}\in \,^\infty_{\lambda}\HC^1_{\lambda}$. Then we can consider the HC bimodule
$\operatorname{pr}_\lambda(V^m\otimes \mathcal{M})\in \,^\infty_{\lambda}\HC^1_{\lambda}$  and  its length
$\ell(\operatorname{pr}_\lambda(V^m\otimes \mathcal{M}))$.

\begin{Prop}\label{Prop:HC_length}
Let $\M\in \,_{\lambda}^\infty\operatorname{HC}^1_{\lambda}$ with $\VA(\M)=\overline{\Orb}$.
Let $m\in \Z_{>0}$. Then there are $0<c<C$  such that for all $m\in \Z_{>0}$ we have
$$c<\frac{\ell(\operatorname{pr}_\lambda(V^m\otimes \M))}{m^{\dim\Orb}}<C.$$
\end{Prop}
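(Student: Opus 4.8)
The plan is to estimate the length of $\operatorname{pr}_\lambda(V^m\otimes \M)$ from above and below by comparing $\M$ with the "model" bimodule $\U_\lambda/\J$ and using the fact that length behaves multiplicatively enough under the operations involved. First I would reduce to the case where $\M$ is simple: since $\M$ has finite length and $\operatorname{pr}_\lambda(V^m\otimes -)$ is exact and additive, $\ell(\operatorname{pr}_\lambda(V^m\otimes \M))$ is the sum of the corresponding lengths over the composition factors of $\M$, and at least one composition factor $\M_w$ has $\VA(\M_w)=\overline{\Orb}$ (the others have associated variety contained in $\overline{\Orb}$, which by the monotonicity of associated variety under translation will only contribute lower-order terms); this gives both the upper bound (sum of at most $\ell(\M)$ terms, each $O(m^{\dim\Orb})$) and reduces the lower bound to a single simple $\M_w$ with $\VA(\M_w)=\overline{\Orb}$. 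So it suffices to prove the two-sided estimate for $\M=\M_w$ with $\VA(\M_w)=\overline{\Orb}$.

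For the \emph{upper bound}: I would use that $\M_w$ is a subquotient (indeed a submodule, by (c1)-type reasoning, but a subquotient suffices) of $\U_\lambda/\J_w$, hence $\ell(\operatorname{pr}_\lambda(V^m\otimes \M_w))\leqslant \ell(\operatorname{pr}_\lambda(V^m\otimes \U_\lambda/\J_w))$. The latter is controlled by the dimension of the space of weight-$\leqslant m$ sections: $\operatorname{pr}_\lambda(V^m\otimes \U_\lambda/\J_w)$ has a good filtration whose associated graded is a subquotient of $V^m\otimes \operatorname{gr}(\U_\lambda/\J_w)$, supported on $\overline{\Orb}$, so its dimension as a $G$-module — more precisely, the "size" governed by the Hilbert function — grows like $m^{\dim\Orb}$ (up to constants), since $V^m$ itself is, roughly, the pushforward of functions on a ball of radius $m$. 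Each composition factor of $\operatorname{pr}_\lambda(V^m\otimes\U_\lambda/\J_w)$, being nonzero, contributes at least a fixed positive amount to this size (there are only finitely many isomorphism classes of simple HC $\U_\lambda$-bimodules), so the number of composition factors is $O(m^{\dim\Orb})$. This is the core counting argument; the asymptotics of $\dim(V^m)$-weighted Hilbert functions for $S(\g)$-modules supported on $\overline{\Orb}$ is the classical input (Bernstein/Joseph), stating that the leading term is $\tfrac{e(\Orb)}{(\dim\Orb)!}\,m^{\dim\Orb}$ times the multiplicity, hence $\asymp m^{\dim\Orb}$.

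For the \emph{lower bound}: here I want to produce many distinct composition factors in $\operatorname{pr}_\lambda(V^m\otimes\M_w)$. The idea is to use the monoidal/cell structure: by Corollary \ref{Cor:tens_prod_filtration}, tensoring $\M_w$ with its dual $\M_{w^{-1}}$ (or with other simples in the same two-sided cell $\dcell$) produces, modulo $\HC_{\partial\Orb}$, a nonzero semisimple bimodule with associated variety exactly $\overline{\Orb}$; iterating and translating, one shows that $\operatorname{pr}_\lambda(V^m\otimes\M_w)$ surjects onto (or contains) something whose semisimplification, modulo lower associated variety, involves $\gtrsim m^{\dim\Orb}$ simples. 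Concretely I would argue: $\operatorname{pr}_\lambda(V^m\otimes\M_w)$ contains $\operatorname{pr}_\lambda(V^m\otimes\M_w)$ whose "leading part" (modulo $\partial\Orb$) matches that of $\operatorname{pr}_\lambda(V^m)\otimes\M_w$ in the quotient category $\HC_\Orb(\U_\lambda)$, and in the rigid monoidal category $\HC_\Orb^{ss}(\U_\lambda)$ the functor $-\otimes\M_w$ is an equivalence (invertible up to $\partial\Orb$), so it does not collapse length. Thus $\ell(\operatorname{pr}_\lambda(V^m\otimes\M_w))\geqslant \ell\big(\text{image of }\operatorname{pr}_\lambda(V^m\otimes\U_\lambda/\J_d)\text{ in }\HC_\Orb\big) - O(m^{\dim\Orb - 1})$ for a distinguished involution $d$ in $\dcell$ with $\J_d\supset\J_w$ — and the latter is $\gtrsim m^{\dim\Orb}$ because $\U_\lambda/\J_d$ is a faithful $\U_\lambda/\J_d$-bimodule of associated variety $\overline{\Orb}$ whose Hilbert function forces that many simple constituents (each simple HC bimodule having bounded Hilbert function, so one needs $\asymp m^{\dim\Orb}$ of them to account for the total).

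The step I expect to be the main obstacle is the lower bound, specifically controlling the "loss" when passing to the quotient category $\HC_\Orb(\U_\lambda)$ and back: one must ensure that the $\gtrsim m^{\dim\Orb}$ composition factors of the truncated/quotiented object genuinely come from honest composition factors of $\operatorname{pr}_\lambda(V^m\otimes\M_w)$ itself (rather than being artifacts of the truncation), and that the simples with associated variety strictly smaller than $\overline{\Orb}$ — which a priori could be numerous — do not interfere. Handling this cleanly will require using rigidity of $\HC_\Orb^{ss}(\U_\lambda)$ (via \cite[Corollary 1.3.2]{HC}) together with the good-filtration/Hilbert-function estimate to separate the $m^{\dim\Orb}$-order contribution of $\overline{\Orb}$ from the $o(m^{\dim\Orb})$-order contribution of $\partial\Orb$, exactly as in the characteristic-zero Gelfand–Kirillov dimension bookkeeping.
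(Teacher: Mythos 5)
Your upper bound is essentially the paper's argument: both amount to bounding the length by $\dim\Hom_{bimod}(P,\, \operatorname{pr}_\lambda(V^m\otimes\M))$ for a fixed projective generator $P=\operatorname{pr}_\lambda(V_0\otimes\U_\lambda)$ (equivalently, by a fixed $G$-isotypic ``size'' functional that is $\geqslant 1$ on every simple of the block), and then identifying this Hom space with $\Hom_G(V^{m*}, V_0^*\otimes\M)$, whose dimension is the Hilbert function of the associated graded of $V_0^*\otimes \M$ for the isotypic filtration and hence a polynomial in $m$ of degree $\operatorname{GK}\dim = \dim\Orb$ (this is exactly Lemma \ref{Lem:length_growth}). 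Two small remarks: the detour through $\U_\lambda/\J_w$ is unnecessary, and it is not clear that $\M_w$ is a sub\emph{quotient} of $\U_\lambda/\J_w$ as a bimodule — but the Hilbert-function estimate applies to $\M$ directly, so nothing is lost.

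The lower bound is where you have a genuine gap, and you correctly flag it as the main obstacle. As sketched, it rests on the claim that in $\HC^{ss}_{\Orb}(\U_\lambda)$ the functor $\bullet\otimes\M_w$ ``does not collapse length'' because the category is rigid; rigidity gives a dual, not invertibility, and you never actually exhibit the $\gtrsim m^{\dim\Orb}$ composition factors — the final assertion that the Hilbert function of $\U_\lambda/\J_d$ ``forces that many simple constituents'' does not specify which additive size functional is meant or why each simple contributes a bounded amount to it. The missing idea is far simpler than the machinery you invoke: under the Bernstein--Gelfand equivalence $\U_\lambda$ corresponds to the dominant Verma $\Delta(w_0\mu)$, an indecomposable projective, so $\dim\Hom_{bimod}(\U_\lambda, V^m\otimes\M)$ is precisely the multiplicity of one particular simple in $\operatorname{pr}_\lambda(V^m\otimes\M)$ and therefore a lower bound for its length. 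Moreover $\Hom_{bimod}(\U_\lambda, V^m\otimes\M)=(V^m\otimes\M)^G=\Hom_G(V^{m*},\M)$, whose dimension is again the Hilbert function of $\gr\M$, a polynomial of degree exactly $\dim\Orb$ with positive leading coefficient because $\VA(\M)=\overline{\Orb}$. Thus the same Lemma \ref{Lem:length_growth} yields both bounds, and no quotient categories, rigidity, or cell combinatorics are needed.
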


\subsection{Lengths for HC bimodules}\label{SS_HC_lengths}
In this section we will prove Proposition \ref{Prop:HC_length} and work over $\C$.
We are going to bound $\ell(\operatorname{pr}_\lambda(V^m\otimes \mathcal{M}))$
by two degree $\dim \Orb$ polynomials in $m$, where $\VA(\mathcal{M})=\overline{\Orb}$.

Under the  Bernstein-Gelfand equivalence  $\,_{\lambda}^\infty\operatorname{HC}^1_{\lambda}
\xrightarrow{\sim} \mathcal{O}(\mu)$ (in the notation of Section \ref{SS_HC_prim})
$\U_\lambda$ maps to the indecomposable projective $\Delta(w_0\mu)$. Every indecomposable projective
in $\,_{\lambda}^\infty\operatorname{HC}^1_{\lambda}$ appears as a summand in an object of the form $\operatorname{pr}_\lambda(V_0\otimes \U_\lambda)$ for a suitable finite dimensional $G$-module $V_0$
that we fix from now on.

\begin{Lem}\label{Lem:length_growth}
Let $\mathcal{M}\in \,_{\lambda}^\infty\operatorname{HC}(\U)^1_{\lambda}$   have associated variety $\overline{\Orb}$.
Then $$\dim \operatorname{Hom}_{bimod}(\U_\lambda, V^m\otimes \M)$$
is a degree $\dim \Orb$ polynomial in $m$.
\end{Lem}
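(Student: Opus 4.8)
The plan is to rewrite the Hom-space as a multiplicity space and then invoke the standard fact that such multiplicities grow like a polynomial of degree equal to the dimension of the support. First I would use the tensor–hom adjunction together with the fact that $V^m$ is self-dual (up to the combinatorial reindexing in its definition) to identify
\[
\Hom_{\mathrm{bimod}}(\U_\lambda, V^m\otimes \M)\cong \Hom_{\mathrm{bimod}}((V^m)^*\otimes\U_\lambda,\M)\cong \Hom_{\mathrm{bimod}}(\U_\lambda, \M)\otimes(\text{something of dimension }\dim V^m),
\]
but more usefully I would instead pass to the $\g$-module side: since $\M$ has a locally finite adjoint $\g$-action, $\Hom_{\mathrm{bimod}}(\U_\lambda, V^m\otimes\M)$ is the space of $\g$-invariants (under the adjoint action) in $V^m\otimes\M$ that are also killed by the augmentation relative to the two central characters, i.e. it computes $\bigoplus_{\mu}[\,\mathrm{the\ }V(\mu)\text{-isotypic multiplicity of }\M\text{, counted }\dim V(\mu)\text{ times, for }\langle\rho^\vee,\mu\rangle\le m\,]$. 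Concretely, unwinding the definition of $V^m$, $\dim\Hom_{\mathrm{bimod}}(\U_\lambda, V^m\otimes\M)=\sum_{\mu:\ \langle\rho^\vee,\mu\rangle\le m}\dim V(\mu)\cdot[\M:V(\mu)]$, where $[\M:V(\mu)]$ is the multiplicity of $V(\mu)$ in the adjoint $\g$-module $\M$.

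Next I would pass to $\gr\M$ with respect to a good filtration: since the filtration is $G$-stable, $\M$ and $\gr\M$ have the same $G$-character, so the sum above equals $\sum_{\mu:\ \langle\rho^\vee,\mu\rangle\le m}\dim V(\mu)\cdot[\gr\M:V(\mu)]$, and $\gr\M$ is a finitely generated $\C[\N]$-module (a $G$-equivariant coherent sheaf on $\overline{\Orb}$, since $\VA(\M)=\overline\Orb$). Now this is exactly a partial sum, weighted by $\dim V(\mu)$, of the graded character of a coherent sheaf on a $\dim\Orb$-dimensional variety. The key step is then the Hilbert-polynomial-type estimate: for a finitely generated graded $G$-equivariant $\C[\N]$-module $N$ supported on $\overline\Orb$, the function $m\mapsto \sum_{\mu:\ \langle\rho^\vee,\mu\rangle\le m}\dim V(\mu)\cdot[N:V(\mu)]$ is, for $m\gg 0$, a polynomial in $m$ of degree exactly $\dim\Orb$, with positive leading coefficient. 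Applying this to $N=\gr\M$ gives the claim, the degree being exactly $\dim\Orb$ because $\VA(\M)=\overline{\Orb}$ forces the support to have dimension $\dim\Orb$.

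The main obstacle is establishing this last polynomiality-and-degree statement cleanly. The point is that $\dim\Hom_{\mathrm{bimod}}(\U_\lambda, V^m\otimes\M)$ differs from a naive dimension count $\dim(\text{truncation of }\M)$ by the weighting factors $\dim V(\mu)$, which themselves grow polynomially in $\mu$ (of degree equal to the number of positive roots). I would handle this by reducing to the case where $N$ is a module over $\C[\overline\Orb]$ (or even over $\C[\overline{\Orb}]$ with a fixed $G$-equivariant structure), using that $\Orb$ has a dense $G$-orbit so $\C[\overline\Orb]$ as a $G$-module is, up to lower-order corrections, $\mathrm{Ind}_{G^e}^G\C=\bigoplus_\mu V(\mu)^{\dim V(\mu)^{G^e}}$; combined with the Weyl dimension formula for $\dim V(\mu)$ and a lattice-point count over the cone $\{\mu:\langle\rho^\vee,\mu\rangle\le m\}$, this produces a polynomial whose degree one computes to be $\dim G/G^e=\dim\Orb$. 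For the two-sided bound one only needs the upper estimate from a surjection off a sum of $\operatorname{pr}_\lambda(V_0\otimes\U_\lambda)$-type objects and the lower estimate from the fact that $\gr\M$ genuinely has full $\dim\Orb$-dimensional support; the exact-degree (as opposed to $\le$) statement is where one must know the support is all of $\overline\Orb$, which is given. I would also remark that this is precisely the algebraic incarnation of "$\operatorname{GK-dim}(\M)=\dim\Orb$", so the lemma is essentially a refinement of the Bernstein-inequality bookkeeping to track the $V^m$-weighting.
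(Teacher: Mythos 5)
Your reduction is the same as the paper's: rewrite $\Hom_{bimod}(\U_\lambda, V^m\otimes\M)$ as $\Hom_G(V^{m*},\M)$, pass to $\gr\M$ for a good filtration, and reduce to an asymptotic count of isotypic multiplicities of a finitely generated $G$-equivariant module supported on $\overline\Orb$. One small point: the weighted sum $\sum_{\langle\rho^\vee,\mu\rangle\le m}\dim V(\mu)\cdot[\M':V(\mu)]$ does not merely ``differ from'' a truncation dimension by the weights --- it \emph{equals} $\dim\M'_{\leqslant m}$, where $\M'_{\leqslant m}$ is the sum of the isotypic components with $\langle\rho^\vee,\mu\rangle\leqslant m$; this is exactly why $V^m$ carries the multiplicity $\dim V(\mu)$. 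This observation is what lets the paper finish cleanly: the filtration $\M'_{\leqslant i}$ is compatible with the analogous filtration on $\C[\g]$, the associated graded $\gr A$ of a finitely generated commutative $G$-algebra for this filtration is again finitely generated (via $\gr A=(\C[G/U\times G/U]^T\otimes A)^G$), GK-dimension is preserved, and hence $m\mapsto\dim\M'_{\leqslant m}$ is a Hilbert polynomial of degree $\operatorname{GK-}\dim\M'=\dim\Orb$. Your proposed endgame --- decomposing $\C[\overline\Orb]$ as $\operatorname{Ind}_{G^e}^G\C$ and combining the Weyl dimension formula with a lattice-point count --- identifies the right statement but does not quite close it: the multiplicities $\dim V(\mu)^{G^e}$ have no elementary formula, and estimating $\sum_\mu\dim V(\mu)\dim V(\mu)^{G^e}$ is precisely the assertion that $\C[G/G^e]_{\leqslant m}$ has dimension growing like $m^{\dim\Orb}$, i.e.\ the same Hilbert-function fact in disguise; moreover your route would at best give two-sided polynomial bounds rather than eventual polynomiality, and would still need an argument passing from $\C[\overline\Orb]$ itself to an arbitrary finitely generated equivariant module with that support. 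So the skeleton is right, but to actually prove the lemma you should replace the explicit induced-module computation by the finite-generation-of-$\gr$ argument (or an equivalent), which is the one genuinely nontrivial input.
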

\begin{proof}
Note that $\Hom_{bimod}(\U_\lambda,V^m\otimes\mathcal{M})=\Hom_{bimod}(U(\g), V^m\otimes \mathcal{M})$
because  $V^m\otimes \mathcal{M}$ has genuine central character $\lambda$ on the right.
Also $\Hom_{bimod}(U(\g), V^m\otimes \mathcal{M})=(V^m\otimes \mathcal{M})^G=\Hom_{G}(V^{m*},\mathcal{M})$.
So $$\dim \operatorname{Hom}_{bimod}(\U_\lambda, V^m\otimes \M)=\dim \Hom_G(V^{m*},\M).$$
Set $\mathcal{M}':=\gr\mathcal{M}$ with respect to some good filtration, this is a finitely
generated $G$-equivariant $\C[\g]$-module. Clearly, $\dim \Hom_G(V^{m*},\M)=\dim \Hom_G(V^{m*},\M')$.
Consider the filtration $\mathcal{M}'_{\leqslant i}$ on
$\mathcal{M}'$ given by $\mathcal{M}'_{\leqslant i}$ being the sum of the isotypic components of
$\mathcal{M}'$ with $\langle \rho^\vee,\mu\rangle\leqslant i$. This filtration is compatible with
the similarly defined filtration on $\C[\g]$.

It is well known that for any finitely
generated commutative $G$-algebra  $A$, the algebra $\gr A$ (for the filtration $A=\bigcup_i A_{\leqslant i}$)
is finitely generated and
for any finitely generated $G$-equivariant $A$-module $M$, the $\gr A$-module $\gr M$ is finitely
generated. This is because $\gr A=(\C[G/U\times G/U]^T\otimes A)^G$ and a similar equality
holds for $\gr M$, here $U$ is a maximal unipotent subgroup of $G$.

It follows that the GK dimensions
of $M,\gr M$ are the same. Since $\dim \Hom_G(V_m^*,\mathcal{M}')=\dim \mathcal{M}'_{\leqslant m}=
\dim \gr\mathcal{M}'_{\leqslant m}$, the left hand side of this equality is the Hilbert polynomial
of the graded module $\gr\mathcal{M}'$. But the GK-dimension of $\mathcal{M}'$ is $\dim \Orb$
and our claim follows.
\end{proof}

\begin{proof}[Proof of Proposition \ref{Prop:HC_length}]
Let us prove that $\ell(\operatorname{pr}_\lambda(V^m\otimes \mathcal{M}))\geqslant Q(m)$, where $Q$ is a degree $\dim \Orb$ polynomial. Note that $\dim\Hom_{bimod}(\U_\lambda, V^m\otimes \mathcal{M})$ is the multiplicity
of the simple bimodule covered by $\U_\lambda$ in $V^m\otimes \mathcal{M}$. By Lemma
\ref{Lem:length_growth}, this multiplicity is a degree $\dim \Orb$ polynomial in $m$, say $Q(m)$.
On the other hand, it is clear that $\ell(\operatorname{pr}_\lambda(V^m\otimes \mathcal{M}))\geqslant
\dim\Hom_{bimod}(\U_\lambda, V^m\otimes \mathcal{M})=Q(m)$.

Let us prove that $\ell(\operatorname{pr}_\lambda(V^m\otimes \mathcal{M}))\leqslant \widetilde{Q}(m)$, where $\widetilde{Q}$ is also a degree $\dim \Orb$ polynomial.
Let $V_0$ be as in the second paragraph of Section \ref{SS_HC_lengths}. Then
\begin{align*}\ell(\operatorname{pr}_\lambda(V^m\otimes \mathcal{M}))
\leqslant&\dim\Hom_{bimod}(V_0\otimes \U_\lambda,
V^m\otimes\mathcal{M})\\&= \dim \Hom_{bimod}(\U_\lambda,V^m\otimes (V_0^*\otimes \mathcal{M}))=:\widetilde{Q}(m).
\end{align*}
Applying Lemma \ref{Lem:length_growth} to $V_0^*\otimes\mathcal{M}$, we see that $\widetilde{Q}(m)$
is a degree $\dim \Orb$
polynomial in $m$.
\end{proof}

In the proof of Proposition \ref{Prop:char_p_length} we will also need the following lemma.

\begin{Lem}\label{Lem:length_head}
There is a constant $0<c_0<1$ such that, for any object $\mathcal{M}\in \,_{\lambda}^\infty\operatorname{HC}(\U)^1_{\lambda}$ we have $\ell(\mathsf{head}\mathcal{M})\geqslant c_0 \ell(\mathcal{M})$.
\end{Lem}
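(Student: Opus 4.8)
The statement asserts a uniform lower bound, by a fixed fraction $c_0$, of the length of the head of any object in $\,^\infty_\lambda\HC^1_\lambda$ by the length of the object itself. The plan is to exploit the Bernstein--Gelfand equivalence $\,^\infty_\lambda\HC^1_\lambda \xrightarrow{\sim} \mathcal{O}(\mu)$, which reduces the question to the block $\mathcal{O}(\mu)$ of category $\mathcal{O}$, a category of modules over a fixed finite-dimensional algebra $B$ (the one depending only on $\lambda$, i.e. only on $W_{[\lambda]}$ and the integral structure). So it suffices to prove: for a finite-dimensional algebra $B$ there is $0 < c_0 < 1$ with $\ell(\operatorname{head} M) \geqslant c_0\, \ell(M)$ for all finite-length $B$-modules $M$. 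The key point that makes this possible is that $B$ has finitely many simple modules; the bound $c_0$ will depend on $B$ (equivalently on $\lambda$), which is all that is needed since $\lambda$ is fixed throughout.

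The main mechanism I would use is the Loewy (radical) filtration. Write $M \supset \operatorname{rad} M \supset \operatorname{rad}^2 M \supset \cdots$, with Loewy length $\leqslant LL(B)$, a constant depending only on $B$. Each layer $\operatorname{rad}^i M / \operatorname{rad}^{i+1} M$ is semisimple, and $\ell(M) = \sum_i \ell(\operatorname{rad}^i M/\operatorname{rad}^{i+1}M)$. If we could bound $\ell(\operatorname{rad}^i M/\operatorname{rad}^{i+1}M)$ by a constant times $\ell(M/\operatorname{rad} M) = \ell(\operatorname{head} M)$, we would be done by summing over the at most $LL(B)$ layers. Such a bound is exactly a statement about the radical filtration of a projective cover: for any simple $S$, the number of composition factors in the $i$-th radical layer of its projective cover $P(S)$ is at most some constant $N_B$ (the maximum over all simples and all radical degrees of such multiplicities). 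Indeed $M$ is a quotient of $\bigoplus_S P(S)^{\oplus m_S}$ with $\sum_S m_S = \ell(\operatorname{head} M)$, and the radical filtration is compatible with quotients (the image of $\operatorname{rad}^i$ contains $\operatorname{rad}^i$ of the quotient), so $\ell(\operatorname{rad}^i M/\operatorname{rad}^{i+1}M) \leqslant \sum_S m_S \cdot \ell(\operatorname{rad}^i P(S)/\operatorname{rad}^{i+1}P(S)) \leqslant N_B \cdot \ell(\operatorname{head} M)$. Summing over $i$ from $0$ to $LL(B)-1$ gives $\ell(M) \leqslant LL(B)\cdot N_B \cdot \ell(\operatorname{head} M)$, so one can take $c_0 = \bigl(LL(B)\, N_B\bigr)^{-1}$ (and shrink it to be $< 1$ if necessary, though it automatically is unless $B$ is semisimple).

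I do not anticipate a genuine obstacle here: the argument is soft once one passes to the finite-dimensional algebra $B = B(\lambda)$ via the Bernstein--Gelfand equivalence, and the only inputs are that $B$ has finitely many simples and finite Loewy length, both automatic for a finite-dimensional algebra. The one point to be careful about is that the constant $c_0$ is allowed to depend on $\lambda$ (through $B$), which the statement permits since $\lambda$ is a fixed parameter of the whole discussion; one should simply remark that $B$ depends only on the integral Weyl group data attached to $\lambda$. An alternative, essentially equivalent, route avoids naming the radical filtration: note that $\operatorname{Hom}_{B}(P, M) \neq 0$ for $P = \bigoplus_S P(S)$ forces, by a counting argument on composition factors, $\ell(M) \leqslant \dim_{\mathbb{C}}\operatorname{Hom}_B(P,M) \cdot (\text{const})$ and $\dim \operatorname{Hom}_B(P, M) = \ell(\operatorname{head} M) \cdot (\text{something bounded})$; but the Loewy-filtration phrasing is cleanest and is the one I would write up.
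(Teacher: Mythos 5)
Your proposal is correct and follows essentially the same route as the paper: both pass through the Bernstein--Gelfand equivalence to reduce to modules over a finite-dimensional algebra, and both then bound $\ell(M)$ by $\ell(\mathsf{head}(M))$ times a constant coming from the projective covers (the paper takes $c_0=1/L$ with $L$ the maximal length of an indecomposable projective, and proves the inequality by induction on $\ell(M)$, peeling off one projective cover of a head constituent at a time; your Loewy-filtration count is just a direct, non-inductive packaging of the same fact and yields the slightly weaker but equally sufficient constant $(LL(B)\,N_B)^{-1}$).
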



\begin{proof}
Note that $\,_{\lambda}^\infty\operatorname{HC}^1_{\lambda}$ is equivalent to the category of modules
over a finite dimensional algebra. We claim that in any such category
\begin{equation}\label{eq:length_ineq}\ell(\mathsf{head}(M))>\frac{1}{L}
\ell(M)\end{equation} for every module $M$, where $L$ is the maximum of lengths of the indecomposable projectives.
Suppose that we know (\ref{eq:length_ineq}) for all $M'$ with $\ell(M')<\ell(M)$. Pick a simple constituent $L$
in $\mathsf{head}(M)$. Let $P_L$ be the projective cover of $L$. We have a homomorphism $\varphi:P_L\rightarrow M$
whose composition with $M\twoheadrightarrow \mathsf{head}(M)$ coincides with $P_L\twoheadrightarrow L\hookrightarrow
\mathsf{head}(M)$. Clearly, $\mathsf{head}(\operatorname{coker\varphi})=\mathsf{head}(M)/L$. Applying
the induction hypothesis to $\operatorname{im}\varphi$ and $\operatorname{coker}\varphi$ we finish the
proof of this lemma.
\end{proof}

\subsection{Lengths in characteristic $p$}
To prove Proposition \ref{Prop:char_p_length} we will need the following  technical lemma.

\begin{Lem}\label{Lem:product_length}
Let $L$ be an irreducible $\U^\chi_{\lambda,\F}$-module
such that $\J=\J_{w^{-1}}$ is a maximal primitive ideal with $\J_{R}L=0$. Then the following is true:
\begin{enumerate}
\item $\mathcal{M}_{w,R}\otimes_{\U_R}L\neq 0$,
\item there is $c_1>1$ independent of $p$
such that $\ell(\mathcal{M}_{w,R}\otimes_{\U_R}L)<c_1$.
\end{enumerate}
\end{Lem}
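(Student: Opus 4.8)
\textbf{Proof strategy for Lemma \ref{Lem:product_length}.}
The plan is to transport the question about $\M_{w,R}\otimes_{\U_R} L$ from characteristic $p$ back to characteristic $0$ using the localization equivalences of Section \ref{SS_loc_char_p}, the compatibility of the $W_{[\lambda]}$-actions from Lemma \ref{Lem:action_compat}, and the behaviour of simples from Proposition \ref{Cor:simples_char_p}(1). For part (1), I would argue as follows. Passing to the derived category, $\M_{w,R}\otimes^L_{\U_R} L$ is computed by the wall-crossing/braid group action: writing $w$ as a product of simple reflections, the derived tensor product with $\M_{w,R}$ is, up to lower cells, built from the functors $\tilde s_\alpha$ of Lemma \ref{either_or}. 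The key point is that $[\M_{w,R}\otimes^L L]\in K_0(\U^\chi_{\lambda,\F}\operatorname{-mod})\cong K_0(\Coh(\mathcal{B}_\chi))$ equals the action of $c_{w^{-1}}|_{q=1}\in \C W_{[\lambda]}$ on the class $[L]$, and this is independent of $p$ by Proposition \ref{Cor:simples_char_p}(1) and Lemma \ref{Lem:action_compat}. Now $L$ is annihilated by $\J_R=\J_{w^{-1},R}$ and $\J_{w^{-1}}$ is \emph{maximal} among primitive ideals killing $L$; by Lemma \ref{Lem:HC_der_prod}(1) applied to the maximal such ideal, all constituents of $\operatorname{Tor}_*^{\U_{\lambda,R}}(\M_{w,R},L)$ are killed by $\J_{w,R}$, so they have associated variety $\subseteq \overline{\Orb}_{\dcell}$ where $\dcell$ is the two-sided cell of $w$. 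To see the tensor product is nonzero it suffices to see the Euler characteristic (the class in $K_0$) is nonzero, which follows from the fact that $\M_w$ is the socle summand of $\U_\lambda/\J_{w^{-1}}$-type behaviour: more precisely, $\M_{w,R}\otimes^L_{\U_R}L$ has a nonzero contribution because $L$ has maximal annihilator $\J_{w^{-1}}$, so $\Hom$-adjunction (the analogue of the argument in the proof of Lemma \ref{either_or}, using that the distinguished-involution bimodule $\M_d$ with $d\sim^L w^{-1}$ acts with a nonzero ``diagonal'' piece on any $L$ of that annihilator) forces a nonzero image. Concretely: the wall-crossing composition realizing $c_{w^{-1}}$ acting on $[L]$, when $\J_{w^{-1}}$ is maximal, cannot vanish because its restriction to the asymptotic algebra $\Jalg_{\dcell}$ is the identity-type idempotent $t_d$, $d\sim^L w^{-1}$, which acts as a nonzero projector on the cell-$\dcell$ part of $K_0$ where $[L]$ has nonzero component (this is exactly the statement that $\M_d$ is the socle of $\U_\lambda/\J_d$ and $\VA(L)=\overline{\Orb}_{\dcell}$, which holds since $\J_{w^{-1}}$ is maximal).

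For part (2), the boundedness of $\ell(\M_{w,R}\otimes_{\U_R} L)$ uniformly in $p$ is the heart of the matter. The point is that $\M_{w,R}\otimes^L_{\U_R}L$ is a \emph{fixed} object of $D^b(\U_{\lambda,R}\operatorname{-bimod})$ acting on $D^b_\chi(\Coh(T^*\mathcal{B}_{\F}))$ via the localization equivalence $\LL_{\lambda'}$, and the total homology (summed over all cohomological degrees) $\sum_i [\operatorname{Tor}_i^{\U_{\lambda,R}}(\M_{w,R},L)]\in K_0$ — hence a fortiori the length of $H_0$ — is controlled by the image of a fixed element of $K_0(\HC(\U_\lambda))$ acting on $[L]\in K_0(\Coh(\mathcal{B}_\chi))$, which is $p$-independent by Proposition \ref{Cor:simples_char_p}(1). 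But length is not visible on $K_0$ alone, so to get an honest bound on $\ell(H_0)$ I would instead argue directly: by (c6), over $R$ all $\operatorname{Tor}^i_{\U_{\lambda,R}}(\M_{w,R},\M_{w_2,R})$ are filtered by finitely many $\M_{w',R}$'s with a fixed (p-independent) number of steps; then $\M_{w,R}\otimes^L_{\U_R}L = (\M_{w,R}\otimes^L_{\U_R}(\U_{\lambda,R}/\J_{w^{-1},R}))\otimes^L_{\U_{\lambda,R}/\J_{w^{-1},R}}L$, and $\U_{\lambda,R}/\J_{w^{-1},R}$ acting on $L$ (which it annihilates, except for the socle which is $\M_d$, $d\sim^L w^{-1}$) contributes only via the finitely-many-step filtration of $\M_{w,R}\otimes^L (\U_\lambda/\J_{w^{-1}})$ by $\M_{w',R}$'s with $w'^{-1}\leqslant^L w^{-1}$, $w'\leqslant^L w$. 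For each such fixed simple bimodule $\M_{w'}$, the length $\ell(\M_{w',R}\otimes_{\U_R}^L L)$ reduces — via the localization equivalence and Lemma \ref{either_or} — to applying a fixed word in the exact functors $\tilde s_\alpha$ to $L$, and each such application changes the length by at most a fixed factor (bounded by the dimension of the relevant $V^m_\F$-type multiplicity space, which for fixed $w'$ does not grow with $p$); compounding over the fixed word gives the uniform bound $c_1$.

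The main obstacle I anticipate is exactly this last uniform-in-$p$ length bound: the statement ``a fixed object of $D^b(\HC)$ acting on $D^b_\chi(\Coh)$ changes lengths of hearts by a bounded factor'' is not formal, because the heart (the $t$-structure coming from $\U^\chi_{\lambda,\F}\operatorname{-mod}$) and the number of nonzero Tor's are $p$-dependent a priori. I would resolve this by working with the exact translation/reflection functors $\Xi_\alpha$ at the level of the \emph{abelian} category $\U_{\F}\operatorname{-mod}^\chi_\lambda$ (as in the proof of Lemma \ref{either_or}), where each $\Xi_\alpha$ is biadjoint and exact, hence multiplies lengths by at most the ``degree'' of the translation functor — and that degree is $\dim$ of a fixed irreducible $G$-module $V(\mu)$ reduced mod $p$, which by the standing assumptions equals a fixed number once $p\gg 0$. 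Thus a word of fixed length in the $\Xi_\alpha$'s multiplies $\ell(L)=1$ by a bounded constant, and since $\tilde s_\alpha(L)$ is either inside the abelian category or $L[1]$ (Lemma \ref{either_or}), the contribution of the braid word presenting $w$ is controlled uniformly. Combining this with part (1) (nonvanishing) and the finitely-many-step filtrations of (c2), (c6) finishes the lemma, with $c_1$ depending only on $w$ and the fixed data $R$, not on $p$.
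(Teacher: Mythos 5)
Both parts of your proposal contain genuine gaps, and in each case the paper's actual argument takes a different (and simpler) route.

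For part (1), your reduction ``to see the tensor product is nonzero it suffices to see the Euler characteristic (the class in $K_0$) is nonzero'' is not valid: the lemma asserts the nonvanishing of the \emph{underived} tensor product $\operatorname{Tor}_0^{\U_R}(\M_{w,R},L)$, whereas a nonzero class of $\M_{w,R}\otimes^L_{\U_R}L$ in $K_0$ only tells you that \emph{some} $\operatorname{Tor}_i$ is nonzero. (The underived statement is what is actually used later, via right-exactness and heads, in the proof of Proposition \ref{Prop:char_p_length}.) Moreover, the assertion that $t_d$ acts as a nonzero projector on $[L]$ is essentially equivalent to what you are trying to prove and is not justified independently. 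The paper instead works entirely with exact sequences of modules: using rigidity of $\HC^{ss}_{\dcell}(\U_\lambda)$ it finds $w'$ with $\M_{w'}\otimes_{\U_\lambda}\M_w$ containing $\M_d$ (for $d$ the Duflo involution with $d\sim^L w^{-1}$) as a summand modulo lower cells, proves $\M_{d,R}\otimes_{\U_R}L\neq 0$ from the exact sequence $\M_{d,R}\to \U_{\lambda,R}/\J_{d,R}\to Q\to 0$ together with Lemma \ref{Lem:HC_der_prod}(3) (which forces $Q\otimes L=0$ since $L$ is irreducible and cannot be a constituent of $Q\otimes L$ or $\operatorname{Tor}_1(Q,L)$), and then propagates the nonvanishing back through the (c2)-filtration.

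For part (2), the step ``$\ell(\M_{w',R}\otimes^L L)$ reduces to applying a fixed word in the exact functors $\tilde s_\alpha$ to $L$'' is false: a simple Harish-Chandra bimodule $\M_{w'}$ is not a composition of wall-crossing bimodules; only its class in $K_0$ is an alternating-sum expression in the $c$-basis, which gives no categorical factorization and no length control. Your reduction is also circular, since it bounds $\ell(\M_w\otimes L)$ in terms of quantities $\ell(\M_{w'}\otimes^L L)$ of exactly the same kind. The paper's fix goes in the opposite direction: it dominates $\M_{w,R}$ by the covering bimodule $\operatorname{pr}_\lambda(V_{0,R}\otimes\U_{\lambda,R})$, which is \emph{projective as a right module}, so that tensoring with $L$ is exact and produces an honest module rather than a complex. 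For an honest module the length \emph{is} visible on $K_0$: it is the sum of the coefficients of its class in the basis $\mathfrak{B}$ of classes of simples, and that class is $y\cdot[L]$ for a fixed, $p$-independent $y\in\C W_{[\lambda]}$. This yields the uniform bound $c_1=\max_{b}\sum_{b'}m_{bb'}$ in one line, with no need for the translation-functor estimates you propose (which, as you note yourself, are not formal).
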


We will first deduce Proposition \ref{Prop:char_p_length} from this lemma and then prove it.

\begin{proof}[Proof of Proposition \ref{Prop:char_p_length}]
Let $\J$ be a maximal primitive ideal in $\U_\lambda$ such that $L$ is annihilated by $\J_R$.
By (c3), $\operatorname{pr}_\lambda(V^m_R\otimes_R L)=\operatorname{pr}_\lambda(V^m\otimes \U/\J)_{R}\otimes_{\U_R}L$.
Thanks to Proposition \ref{Prop:HC_length}, what we need to prove is that there are constants
$0<c<C$ such that $c\leqslant\ell(\mathcal{M}_{\F}\otimes_{\U_\F}L)/\ell(\mathcal{M})\leqslant C$ for any
$\M\in \,_{\lambda}^\infty\operatorname{HC}(\U_R)^1_{\lambda}$ whose right annihilator is $\J_R$.
By (1) of Lemma \ref{Lem:product_length} combined with Lemma \ref{Lem:length_head} and (c4), we can set
$c:=c_0$ from Lemma \ref{Lem:length_head}. By Lemma \ref{Lem:product_length}, we can set $C:=c_1$.
\end{proof}

\begin{proof}[Proof of Lemma \ref{Lem:product_length}]
Let us prove (1). Pick $w'$ such that $\M_{w'}\otimes_{\U_\lambda}\M_w$ has $\M_d$ as a direct summand in
$\HC_{\dcell}(\U_\lambda)$ ($w'$ exists because $\HC^{ss}_{\dcell}(\U_\lambda)$ is a rigid monoidal
category). Set $w_1=w', w_2=w$ in (c2) and let $M_1\subset M_2$ be as in (c2).
The equality $\M_{w,R}\otimes_{\U_R}L\neq 0$ will follow once we show that
$(\left(\M_{w',R}\otimes_{\U_R}\M_{w,R}\right)/M_1)\otimes_{\U_{R}}L\neq 0$.

First, let us check that
$\M_{d,R}\otimes_{\U_{R}}L\neq 0$. By the choice of $d$, we have $(\U_{\lambda,R}/\J_{d,R})\otimes_{\U_{\lambda,R}}L\neq 0$. So $\M_{d,R}\otimes_{\U_{\lambda,R}}L\neq 0$
as long as $\operatorname{Tor}^i_{\U_{\lambda,R}}((\U_{\lambda,R}/\J_{d,R})/M_{d,R},L)$ does not have
$L$ in its Jordan-Hoelder series for $i=0,1$. This is a consequence of (3) Lemma \ref{Lem:HC_der_prod}.
 We conclude that $\M_{d,R}\otimes_{\U_{\lambda,R}}L\neq 0$.

From here we deduce that $(M_2/M_1)\otimes_{\U_{\lambda,R}}L\neq 0$. Similarly to the previous paragraph
this implies  $(\left(\M_{w',R}\otimes_{\U_R}\M_{w,R}\right)/M_1)\otimes_{\U_{\lambda,R}}L\neq 0$. This finishes the proof of (1).


Let us prove (2).
It is enough to prove this statement with $\M_{w,R}$ replaced with a bimodule that covers
it, e.g., $\operatorname{pr}_\lambda(V_{0,R}\otimes \U_{\lambda,R})_R$, where $V_0$ is as
in the beginning of Section \ref{SS_HC_lengths}. Note that bimodule is projective as a right module.
On the level of $K_0$ the operator $\operatorname{pr}_\lambda(V_{0,R}\otimes \U_{\lambda,R})\otimes_{\U_{\lambda,R}}\bullet$ is the multiplication by some element, say $y$, of $\C W_{[\lambda]}$ independent of $p$. For $b\in \mathfrak{B}$, we can expand $yb=\sum_{b'\in \mathfrak{B}}m_{bb'}b'$. Then $c_1=\max_{b\in \mathfrak{B}}\sum_{b'}m_{bb'}$ satisfies the conditions of (2).
\end{proof}

\section{Proof of Theorem \ref{Thm:main}}
In this section we will prove Theorem \ref{Thm:main}. Part (1) is proved in Section
\ref{SS_part1_proof}, while the proof of part (2) occupies the remainder of the section.
We will describe the main steps of the proof in  Section \ref{SS_proof_2_outline}.

\subsection{Proof of part (1) of Theorem \ref{Thm:main}}\label{SS_part1_proof}

For a two-sided cell $\dcell$ in $W_{[\lambda]}$ consider the full subcategory
$\U_{\lambda,\F}\operatorname{-mod}_{\leqslant \dcell}^{\chi}$ that is the Serre span of all simples annihilated by
$\J_{w,R}$ with $w\in \dcell$.  Note that $D^b(\U_{\lambda,\F}\operatorname{-mod}^\chi)_{\leqslant \dcell}$ is a submodule category for the action of $D^b_{HC}(\U_{\lambda,R}\operatorname{-mod})$, this follows from
(2) of Lemma \ref{Lem:HC_der_prod}.

We have the following result.

\begin{Prop}\label{Prop:categor_filtration}
All irreducible representations of $W_{[\lambda]}$ occurring in $$K_0(\U_{\lambda,\F}\operatorname{-mod}_{\leqslant \dcell}^{\chi})/K_0(\U_{\lambda,\F}\operatorname{-mod}_{<\dcell}^{\chi})$$ belong to the two-sided cell
$\dcell$.
\end{Prop}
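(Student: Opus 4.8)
The plan is to rephrase the assertion as a vanishing statement for the action of the Kazhdan--Lusztig basis on the subquotient Grothendieck group, and then to read off the cell information from the block decomposition of the asymptotic Hecke algebra.

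Write $M:=K_0(\U_{\lambda,\F}\operatorname{-mod}_{\leqslant \dcell}^{\chi})/K_0(\U_{\lambda,\F}\operatorname{-mod}_{<\dcell}^{\chi})$. Since both Serre subcategories are submodule categories for the $D^b_{HC}(\U_{\lambda,R}\operatorname{-bimod})$-action (Lemma \ref{Lem:HC_der_prod}(2)), $M$ is a module over $K_0(\HC(\U_\lambda))=\C W_{[\lambda]}\cong\Jalg(W_{[\lambda]})$, with basis the classes $[L]$ of the simple $\U_{\lambda,\F}^\chi$-modules $L$ with $\dcell(L)=\dcell$; here $\dcell(L)$ is the two-sided cell of the maximal primitive ideal of $\U_\lambda$ whose mod-$p$ reduction annihilates $L$, so that $\U_{\lambda,\F}\operatorname{-mod}_{\leqslant\dcell}^\chi=\{L:\dcell(L)\leqslant\dcell\}$. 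Now an irreducible $\C W_{[\lambda]}$-module $E$ belongs to a two-sided cell $\dcell_1$ exactly when it is inflated from the block $\Jalg_{\dcell_1}$ of $\Jalg(W_{[\lambda]})$ (Section \ref{SS_Hecke_HC}); on such an $E$ the unit $\sum_{d\in\dcell_1}t_d$ acts as the identity, so some $t_w$ with $w\in\dcell_1$ acts nontrivially, and since $\phi(c_w)-t_w$ lies in the other blocks of $\Jalg(W_{[\lambda]})$ under Lusztig's isomorphism $\phi$, the element $c_w$ itself acts nontrivially on $E$. Hence the proposition follows once one shows that $c_v$ annihilates $M$ for every $v\in W_{[\lambda]}$ whose two-sided cell is not $\dcell$.

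This last vanishing is what I would deduce from Lemma \ref{Lem:HC_der_prod}. Recall that $c_{v^{-1}}$ acts on $K_0(\U_{\lambda,\F}\operatorname{-mod}^\chi)$ by $[L]\mapsto\sum_i(-1)^i[\operatorname{Tor}_i^{\U_{\lambda,R}}(\M_{v,R},L)]$. Fix a simple $L$ with $\dcell(L)=\dcell$ and a distinguished involution $d$ with $\J_d$ a maximal primitive ideal annihilating $L$ after reduction, as in Lemma \ref{Lem:HC_der_prod}; let $w\in W_{[\lambda]}$ lie in a two-sided cell $\dcell'\ne\dcell$. If $\dcell'\not\leqslant\dcell$, then every simple constituent $L'$ of $\operatorname{Tor}_*^{\U_{\lambda,R}}(\M_{w,R},L)$ is annihilated by $\J_{w,R}$ (Lemma \ref{Lem:HC_der_prod}(1)), whence $\dcell(L')\geqslant\dcell'$, while Lemma \ref{Lem:HC_der_prod}(2) forces $\dcell(L')\leqslant\dcell(L)=\dcell$ -- impossible; so $\M_{w,R}\otimes^L_{\U_{\lambda,R}} L=0$ and $c_{w^{-1}}[L]=0$. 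If $\dcell'<\dcell$, then $d\not\sim^L w^{-1}$ (distinct two-sided cells), so Lemma \ref{Lem:HC_der_prod}(3) applies and the maximal distinguished involution $d'$ attached to any constituent $L'$ satisfies $d'<^L w$; hence the two-sided cell of $d'$ is $\leqslant\dcell'$, so $\dcell(L')\leqslant\dcell'<\dcell$, $[L']=0$ in $M$, and $c_{w^{-1}}[L]=0$ in $M$. As any $v$ with two-sided cell $\ne\dcell$ has $w:=v^{-1}$ in a cell that is either not $\leqslant\dcell$ or $<\dcell$, these two cases give $c_v M=0$.

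The one genuinely non-formal ingredient here is the description of the cell filtration used above: that each simple $\U_{\lambda,\F}^\chi$-module has a well-defined attached two-sided cell $\dcell(L)$, that of the unique maximal primitive ideal whose reduction kills it -- the uniqueness assertion anticipated in the Introduction. I would isolate and prove that first; I expect it to be the main obstacle, while the rest is bookkeeping with Lemma \ref{Lem:HC_der_prod}. Conceptually, the two cases above say precisely that the $\HC(\U_\lambda)$-action on $\U_{\lambda,\F}\operatorname{-mod}_{\leqslant\dcell}^\chi/\U_{\lambda,\F}\operatorname{-mod}_{<\dcell}^\chi$ descends to an action of the semisimple monoidal category $\HC^{ss}_\dcell(\U_\lambda)$, and since $K_0(\HC^{ss}_\dcell(\U_\lambda))=\Jalg_\dcell(W_{[\lambda]})$ by Joseph's theorem recalled in Section \ref{SS_Hecke_HC}, every irreducible occurring in $M$ must lie in the cell $\dcell$.
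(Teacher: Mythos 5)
Your reduction of the Proposition to a statement about the $\C W_{[\lambda]}$-action on $M$, and your use of the block decomposition of $\Jalg(W_{[\lambda]})$ to detect which family an irreducible belongs to, are both reasonable and overlap with the paper's strategy. Your Case 2 is essentially the paper's first step (the ideal $\C W_{[\lambda],<\dcell}$ carries $K_0(\U_{\lambda,\F}\operatorname{-mod}^\chi_{\leqslant\dcell})$ into $K_0(\U_{\lambda,\F}\operatorname{-mod}^\chi_{<\dcell})$, which indeed follows from Lemma \ref{Lem:HC_der_prod}(1)). But the statement you reduce everything to --- that $c_v$ annihilates $M$ for \emph{every} $v$ whose two-sided cell is not $\dcell$ --- is false whenever $M\neq 0$ and $\dcell$ is not the maximal two-sided cell. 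Indeed, the two-sided based ideal generated by the elements $c_v$ with $v$ in the maximal cell is all of $\C W_{[\lambda]}$ (it contains the unit), so these $c_v$ cannot all kill a nonzero module. The error sits in your Case 1, in the inference ``$L'$ is annihilated by $\J_{w,R}$, whence $\dcell(L')\geqslant\dcell'$'': containment of primitive ideals is \emph{order-reversing} relative to the order on two-sided cells used throughout the paper (smaller cell corresponds to smaller orbit $\Orb_{\dcell}$, hence to a \emph{larger} primitive ideal, since $\VA(\U_\lambda/\J)$ shrinks as $\J$ grows). So annihilation by $\J_{w,R}$ bounds the cell of $L'$ from the side compatible with Lemma \ref{Lem:HC_der_prod}(2), produces no contradiction, and gives no vanishing of $\M_{w,R}\otimes^L_{\U_{\lambda,R}}L$; the case $w=e$, $\M_{e}$, $\J_e$ minimal, already shows this.

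What is missing is not a vanishing statement for the cells $\dcell'\not\leqslant\dcell$ but a \emph{generation} statement: one must show $\C W_{[\lambda],\leqslant\dcell}\,K_0(\U_{\lambda,\F}\operatorname{-mod}^\chi_{\leqslant\dcell})=K_0(\U_{\lambda,\F}\operatorname{-mod}^\chi_{\leqslant\dcell})$. Combined with your Case 2 this forces every irreducible constituent of $M$ to occur in $I_{\leqslant\dcell}/I_{<\dcell}$, i.e.\ to lie in the family $\dcell$. The paper proves the generation statement by picking $d\in\dcell$ with $\J_{d,R}L=0$, writing $L=(\U_{\lambda,R}/\J_{d,R})\otimes^L_{\U_{\lambda,R}/\J_{d,R}}L$, and expanding $[\U_\lambda/\J_d]=\sum c_{w_1,w_2}[\M_{w_1}][\M_{w_2}]$ with $w_1\in\dcell$ --- this uses the rigidity of $\HC^{ss}_{\dcell}(\U_\lambda)$ via condition (c2), an ingredient absent from your argument. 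Incidentally, the uniqueness of the maximal annihilating primitive ideal (Lemma \ref{Lem:prim_annihilator}), which you flag as the main obstacle, is not actually needed for this Proposition; it is needed later, for part (2) of the main theorem.
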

\begin{proof}
We need to prove two statements:
\begin{enumerate}
\item $\C W_{[\lambda],<\dcell}K_0(\U_{\lambda,\F}\operatorname{-mod}^\chi_{\leqslant \dcell})
\subset K_0(\U_{\lambda,\F}\operatorname{-mod}^\chi_{<\dcell})$.
\item  $\C W_{[\lambda],\leqslant \dcell} K_0(\U_{\lambda,\F}\operatorname{-mod}^\chi_{\leqslant \dcell})=
K_0(\U_{\lambda,\F}\operatorname{-mod}^\chi_{\leqslant \dcell})$,
\end{enumerate}
(1) follows from (1) of Lemma \ref{Lem:HC_der_prod}. Let us prove (2).
Let $L\in \U_{\lambda,\F}\operatorname{-mod}^\chi_{\leqslant \dcell}$ be a simple object
annihilated by $\J_{d,R}$, where $d\in \dcell$. Tautologically $(\U_{\lambda,R}/\J_{d,R})\otimes^L_{\U_{\lambda,R}/\J_{d,R}}L=L$. Let us write
$V$ for $\operatorname{Span}_{w\leqslant^L d}(\M_{w^{-1}})$. Note that
$\C W_{[\lambda],\leqslant \dcell}V=V$.  Indeed, all $W$-irreducibles appearing in $V$
belong to the families indexed by two-sided cells $\leqslant \dcell$.

So we have $[\U_{\lambda}/\J_{d}]=
\sum c_{w_1,w_2}[\M_{w_1}][\M_{w_2}]$ (an equality in $V$), where $w_1$ runs over $\dcell$ and $w_2$
over the elements such that $w_2^{-1}\leqslant^L d$.
It follows that $[L]$ belongs to the linear span of the classes of the form
$$[\left(\M_{w_1,R}\otimes^L_{\U_\lambda,R}\M_{w_2,R}\right)_{\U_{\lambda,R}/\J_{d,R}}L]=
[\M_{w_1}][\M_{w_2,R}\otimes^L_{\U_{\lambda,R}}L].$$
Note that $\M_{w_2,R}\otimes^L_{\U_{\lambda,R}}L\in D^b(\U_{\lambda,\F}\operatorname{-mod}^\chi)_{\leqslant \dcell}$
by (1) of Lemma \ref{Lem:HC_der_prod}. We deduce that $[L]\in
\C W_{[\lambda],\leqslant \dcell}K_0(\U_{\lambda,\F}\operatorname{-mod}^\chi_{\F})$.
This implies (2).
\end{proof}

\begin{proof}[Proof of (1) of Theorem \ref{Thm:main}]
Proposition \ref{Prop:categor_filtration} and an easy induction on $\dcell$
show that $$K_0(\U_{\lambda,\F}\operatorname{-mod}^\chi)_{\leqslant\dcell}=
K_0(\U_{\lambda,\F}\operatorname{-mod}^\chi_{\leqslant \dcell}).$$
This establishes the claim of  part (1) at $q=1$.

To prove the full claim one uses the graded lifts mentioned in Section
\ref{SS_loc_char_p}.  Namely, let us write $\mathcal{C}^{gr}$ for the
graded lift of $\mathcal{C}:=\U_{\lambda,\F}\operatorname{-mod}^\chi$.
 We still have the two-sided cell filtration $\mathcal{C}^{gr}_{\leqslant \dcell}$ on $\mathcal{C}^{gr}$
that is closed under the grading shifts and lifts the filtration
$\U_{\lambda,\F}\operatorname{-mod}^{\chi}_{\leqslant\dcell}$.

We claim that  $D^b(\mathcal{C}^{gr}_{\leqslant \dcell})$ is invariant
under the braid group $B_{[\lambda]}$, hence its Grothendieck
group is invariant under the corresponding Hecke algebra.
It is enough to check that, for a simple module $L\in \mathcal{C}^{gr}_{\leqslant \dcell}$
and a generator $\tilde s_\alpha\in B_{[\lambda]}$, we have
$\tilde s_\alpha(L)\in D^b(\mathcal{C}^{gr}_{\leqslant \dcell})$.
In view of Lemma \ref{either_or} this follows from $s_\alpha([L])\in K^0(\mathcal{C}_{\leqslant \dcell})$,
which has already been proven.

So $K_0(\mathcal{C}^{gr}_{\leqslant \dcell})/K_0(\mathcal{C}^{gr}_{<\dcell})$ is an
$\mathcal{H}_q(W_{[\lambda]})$-module flat over $\C[q^{\pm 1}]$ that specializes to
$$K_0(\U_{\lambda,\F}\operatorname{-mod}^\chi_{\leqslant \dcell})/K_0(\U_{\lambda,\F}\operatorname{-mod}^\chi_{<\dcell})$$
at $q=1$. The latter factors through the quotient corresponding to $\dcell$ hence so is the former.
\end{proof}

\subsection{Outline of the proof of (2) of Theorem \ref{Thm:main}}\label{SS_proof_2_outline}
Below we will prove part (2) of Theorem \ref{Thm:main}. We will start with the
$\chi=0$ case.

\begin{Prop}\label{Prop:0_dim_growth}
(2) of Theorem \ref{Thm:main} holds when $\chi=0$.
\end{Prop}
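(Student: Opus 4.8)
The plan is to combine part (1) of Theorem \ref{Thm:main} (already available) with Proposition \ref{Prop:char_p_length}, which controls the growth of $\ell(\operatorname{pr}_\lambda(V^m_\F\otimes L))$. Fix $b\in\mathfrak{B}$ lying in $K_0(\operatorname{Coh}^{\C^\times}(\mathcal{B}_e))_{\leqslant\dcell}$ but not in a smaller piece, and let $L=V_{\lambda,p}(b)$. By part (1) we know the two-sided cell filtration on $K_0(\U_{\lambda,\F}\operatorname{-mod}^\chi)$ agrees with the Serre filtration by $\U_{\lambda,\F}\operatorname{-mod}^\chi_{\leqslant\dcell}$; in particular $L\in\U_{\lambda,\F}\operatorname{-mod}^\chi_{\leqslant\dcell}$ and $[L]\notin K_0(\U_{\lambda,\F}\operatorname{-mod}^\chi_{<\dcell})$. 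Then I would identify a maximal primitive ideal $\J\subset\U_\lambda$ with $\J_R L=0$: since $L$ lies in the $\leqslant\dcell$ part but not below, and the simple HC bimodules $\M_d$ for $d\in\dcell$ are the ones whose primitive ideals $\J_d$ have associated variety $\overline{\Orb}_\dcell$, one gets $\VA(\U_\lambda/\J)=\overline{\Orb}_\dcell$. Proposition \ref{Prop:char_p_length} then gives constants $0<c<C$ with $c\,m^{\dim\Orb_\dcell}<\ell(\operatorname{pr}_\lambda(V^m_\F\otimes L))<C\,m^{\dim\Orb_\dcell}$ for $p\gg m$.

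Next I would convert this statement about lengths of $\operatorname{pr}_\lambda(V^m_\F\otimes L)$ into a statement about $\dim L$. When $\chi=0$ the relevant algebra is $\U^0_{\lambda,\F}$, whose simple modules all have dimension on the order of the number of points of the (unrestricted) Springer fiber data; more precisely, via Lemma \ref{Lem:K_0_indep} and Proposition \ref{Cor:simples_char_p} the dimension of each simple $\U^0_{\lambda,\F}$-module is a polynomial in $p$, and multiplication by the class of $V^m_\F$ in $K_0$ is the action of a fixed (independent of $p$) element of $\C W_{[\lambda]}$. Concretely, $\dim\operatorname{pr}_\lambda(V^m_\F\otimes L)$ is, up to bounded multiplicity factors, $\sum_{L'}[\operatorname{pr}_\lambda(V^m_\F\otimes L):L']\dim L'$. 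Since all simples of $\U^0_{\lambda,\F}$ have dimension polynomial in $p$ with degrees bounded by $\dim\mathcal{B}=\dim\N/2$, and since $\dim(V^m_\F)$ is itself polynomial of degree $\dim\N/2$ in $m$ (it is the Hilbert function of $\C[\N]$-type growth), I would compare $\dim\operatorname{pr}_\lambda(V^m_\F\otimes L)$ on the one hand with $\ell(\operatorname{pr}_\lambda(V^m_\F\otimes L))\cdot\max_{L'}\dim L'$ (an upper bound) and on the other with $\ell(\operatorname{pr}_\lambda(V^m_\F\otimes L))\cdot\min_{L'}\dim L'$ (a lower bound, once we know the simples appearing all have comparable dimension — which for $\chi=0$ they do, being indexed by a $p$-independent basis). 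Taking $p$ and $m$ both large and using that $\dim\operatorname{pr}_\lambda(V^m_\F\otimes L)$ is, for fixed $m$, polynomial in $p$, matching the leading powers of $p$ on the two sides of the squeeze forces $\deg_p\dim V_{\lambda,p}(b)=\dim\Orb_\dcell/2$.

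The main obstacle I anticipate is the bookkeeping in the last step: $\ell(\operatorname{pr}_\lambda(V^m_\F\otimes L))$ counts simple constituents of $\U^0_{\lambda,\F}$-modules, whose individual dimensions are themselves $p$-dependent polynomials, so the translation between "length grows like $m^{\dim\Orb}$ for $p\gg m$" and "dimension is a degree-$\dim\Orb/2$ polynomial in $p$" requires a careful double-limit argument — one must ensure the implied constants in Proposition \ref{Prop:char_p_length} are genuinely $p$-independent (which they are by construction there) and that the polynomiality in $p$ of $\dim V_{\lambda,p}(b)$ (Proposition \ref{Cor:simples_char_p}(2)) lets one read off the degree by evaluating the length asymptotics. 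A secondary subtlety is pinning down that the maximal primitive ideal $\J$ with $\J_R L=0$ has associated variety exactly $\overline{\Orb}_\dcell$ and not something smaller; this uses the compatibility of the cell filtration with $\mathfrak{B}$ from part (1) together with the fact, recalled in Section \ref{SS_Hecke_HC}, that $\M_d$ for a Duflo involution $d\in\dcell$ is the socle of $\U_\lambda/\J_d$ with $\VA(\U_\lambda/\J_d)=\overline{\Orb}_\dcell$.
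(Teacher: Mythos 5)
There is a genuine gap, and it sits exactly where you anticipated trouble: the passage from length growth in $m$ to dimension growth in $p$. The squeeze
$\ell(\operatorname{pr}_\lambda(V^m_\F\otimes L))\cdot\min_{L'}\dim L'\leqslant \dim \operatorname{pr}_\lambda(V^m_\F\otimes L)\leqslant \ell(\operatorname{pr}_\lambda(V^m_\F\otimes L))\cdot\max_{L'}\dim L'$
requires knowing the degrees of the dimension polynomials of the simple constituents $L'$ of $\operatorname{pr}_\lambda(V^m_\F\otimes L)$ --- but for $\chi=0$ those constituents are again simples of $\U^0_{\lambda,\F}$, so this is precisely the statement being proved; the argument is circular. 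Your parenthetical claim that the simples of $\U^0_{\lambda,\F}$ all have comparable dimension because they are indexed by a $p$-independent basis is false: their dimension polynomials have degrees ranging over $\dim\Orb_{\dcell'}/2$ for the various cells $\dcell'$ (from $0$ up to $\dim\mathcal{B}$), which is the content of the proposition. More fundamentally, the bounds in Proposition \ref{Prop:char_p_length} are uniform in $p$ --- the constants $c,C$ and the exponent $\dim\Orb$ of $m$ do not depend on $p$ --- so no amount of manipulation of lengths alone can detect how $\dim L$ depends on $p$. Some independent input tying characteristic-$p$ dimensions to a characteristic-zero invariant is indispensable. (A smaller error: $\dim V^m_\F$ grows like $m^{\dim\g}$, not $m^{\dim\N/2}$.)

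The paper supplies that input by an entirely different route (Etingof's construction, Section \ref{SS_Etingof}): one compares the graded components of the simple quotient $L_{\F_p}(\lambda)$ of the Verma module with those of $L_{\Q}(\lambda)$. Lemma \ref{Lem:dim_growth} shows $\dim L^i_{\F_p}(\lambda)=\dim L^i_{\Q}(\lambda)$ for $i<p/N_\lambda$, by proving that the relevant factors of the Jantzen-type determinant of the contravariant form do not vanish mod $p$ in that range. Since $\operatorname{GK-}\dim L_\Q(\lambda)=\dim\Orb_\dcell/2$ (classical characteristic-zero theory), summing graded dimensions up to $i\sim p/N_\lambda$ gives the lower bound of degree $\dim\Orb_\dcell/2$ in $p$; the matching upper bound comes from the fact that $L_\F(\lambda)$ is a quotient of the baby Verma module together with a PBW-filtration estimate. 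Proposition \ref{Prop:char_p_length}, which you place at the center of the $\chi=0$ case, is in fact only used later, in Proposition \ref{Prop:degen_cell_image}, to show that the degeneration map does not drop the cell --- i.e.\ to reduce general $\chi$ to $\chi=0$, not to handle $\chi=0$ itself.
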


Here we will use an easy adaptation of an argument due to Etingof that
relates the degrees of dimension polynomials with the GK dimensions of simples in
the category $\mathcal{O}$ (in characteristic $0$). We will reduce the case of general $\chi$ to $\chi=0$
by using the degeneration map $K_0(\U_{\lambda,\F}^\chi\operatorname{-mod})\rightarrow
K_0(\U_{\lambda,\F}^0\operatorname{-mod})$. This map can be shown to be independent of $p$.
Hence it preserves the dimension polynomials.   The  most nontrivial
step is to show that the degeneration of a simple module that lies in $K_0(\U^\chi_{\lambda,\F}\operatorname{-mod})_{\leqslant \dcell}$ but not in the lower filtration terms
lies in $K_0(\U^0_{\lambda,\F}\operatorname{-mod})_{\leqslant \dcell}$ (this is straightforward) but not in
the lower filtration terms (this is harder, we need  Proposition \ref{Prop:char_p_length} to handle this part).

Before we proceed to proving part (2), let us reformulate it. For this we need the following lemma.

\begin{Lem}\label{Lem:prim_annihilator}
Let a simple $L\in \U_{\lambda,\F}^\chi\operatorname{-mod}_{\leqslant \dcell}$ but not in lower filtration terms.
Then there is a unique maximal primitive ideal $\J=\J_d\in \operatorname{Prim}(\U_\lambda)$ with
$\J_R L=0$. We have $d\in \dcell$.
\end{Lem}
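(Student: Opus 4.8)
The plan is to combine the filtration result in part (1) of Theorem \ref{Thm:main} (already proven at the start of this section) with the cell-theoretic facts about primitive ideals recalled in Section \ref{SS_Hecke_HC}. First I would recall that the set of maximal primitive ideals $\J$ with $\J_R L = 0$ is nonempty and closed under... well, more precisely, the collection of primitive ideals annihilating $L$ after reduction mod $p$ is a nonempty set, and since $\U_\lambda$ is Noetherian with ACC on two-sided ideals, it has maximal elements; pick one such, say $\J_d$, where I use the surjection $W_{[\lambda]} \twoheadrightarrow \Prim(\U_\lambda)$, $w \mapsto \J_w$, and the fact that each fiber contains a distinguished involution $d$ with $\J_d = \J_{d'}$ for $d' \sim^L d$. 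So existence is essentially automatic; the content of the lemma is \emph{uniqueness}, together with the identification $d \in \dcell$.

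For the membership $d \in \dcell$: since $L \in \U_{\lambda,\F}\operatorname{-mod}^\chi_{\leqslant\dcell}$ (using the identification $K_0(\U_{\lambda,\F}\operatorname{-mod}^\chi)_{\leqslant\dcell} = K_0(\U_{\lambda,\F}\operatorname{-mod}^\chi_{\leqslant\dcell})$ from the proof of part (1)), the definition of $\U_{\lambda,\F}\operatorname{-mod}^\chi_{\leqslant\dcell}$ as the Serre span of simples annihilated by $\J_{w,R}$ with $w$ in a cell $\leqslant \dcell$ forces the two-sided cell of $d$ to be $\leqslant \dcell$. On the other hand, if the cell of $d$ were strictly smaller than $\dcell$, then $L$ would lie in $\U_{\lambda,\F}\operatorname{-mod}^\chi_{<\dcell}$, contradicting the hypothesis that $L$ is not in the lower filtration terms. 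Hence the cell of $d$ equals $\dcell$, and replacing $d$ by a $\sim^L$-equivalent distinguished involution we may take $d \in \dcell$.

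For uniqueness, suppose $\J_{d}$ and $\J_{d'}$ are two maximal primitive ideals with $\J_{d,R} L = 0 = \J_{d',R} L$. By the previous paragraph both $d, d'$ lie in $\dcell$. I would use the characterization of the distinguished (Duflo) involution: $\M_d$ is the socle of $\U_\lambda/\J_d$ and $\VA\big((\U_\lambda/\J_d)/\M_d\big) \subset \partial\Orb_\dcell$. Since $\J_{d,R}$ and $\J_{d',R}$ both annihilate $L$, so does $(\J_d \cap \J_{d'})_R$; if $\J_d \neq \J_{d'}$, then $\J_d \cap \J_{d'}$ is strictly contained in $\J_d$ and in $\J_{d'}$, so by maximality it is not primitive, but one can still estimate $\VA(\U_\lambda/(\J_d\cap\J_{d'}))$ and the growth of $\operatorname{pr}_\lambda(V^m_\F \otimes L)$. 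The cleanest route is probably this: Proposition \ref{Prop:char_p_length} shows $\ell(\operatorname{pr}_\lambda(V^m_\F \otimes L))$ grows like $m^{\dim \Orb_\dcell}$, with $\Orb = \Orb_\dcell$ determined by \emph{any} maximal $\J$ annihilating $L$; this already pins down $\overline\Orb_\dcell = \VA(\U_\lambda/\J_d) = \VA(\U_\lambda/\J_{d'})$, and then within a single two-sided cell the primitive ideal annihilating an irreducible module on, say, the left is determined by the left cell, so one reduces to showing $L$ cannot be annihilated (mod $p$) by two maximal primitive ideals from distinct left cells of $\dcell$. I expect this last point — ruling out distinct left-cell annihilators — to be the main obstacle; I would handle it by using that $\M_{d,R} \otimes_{\U_R} L \neq 0$ (Lemma \ref{Lem:product_length}(1)) for each such $d$, together with Lemma \ref{Lem:HC_der_prod}(2), to force the cells into a chain, and then invoke maximality of both $\J_d$ and $\J_{d'}$ to collapse them to a single left cell, hence $\J_d = \J_{d'}$.
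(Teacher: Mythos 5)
Your setup (existence via Noetherianity, and the identification $d\in\dcell$ from the definition of $\U_{\lambda,\F}\operatorname{-mod}^\chi_{\leqslant\dcell}$ together with the hypothesis that $L$ is not in lower filtration terms) is fine and matches the paper. The gap is exactly where you suspect it: having placed both candidate ideals $\J_d,\J_{d'}$ in the same two-sided cell $\dcell$, you must exclude $d\not\sim^L d'$, and the tools you cite cannot do this. Proposition \ref{Prop:char_p_length} only sees the associated variety, and Lemma \ref{Lem:HC_der_prod}(2) only compares two-sided cells; both invariants already agree for $\J_d$ and $\J_{d'}$ by construction, so ``forcing the cells into a chain'' yields no new information, and ``invoking maximality to collapse to a single left cell'' is precisely the assertion to be proved. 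The ingredient with genuine left-cell resolution is Lemma \ref{Lem:HC_der_prod}(3): your line of attack can in fact be completed by combining Lemma \ref{Lem:product_length}(1) with $w=d$ (so $\M_{d,R}\otimes_{\U_{\lambda,R}}L\neq 0$) with Lemma \ref{Lem:HC_der_prod}(3) applied to $w=d$, taking the maximal annihilator of $L$ in that lemma to be $\J_{d'}$ (legitimate since $d'\not\sim^L d=w^{-1}$); then every constituent of $\M_{d,R}\otimes L$ has all its maximal annihilators of the form $\J_{d''}$ with $d''<^L d$, while by part (1) of that lemma it is also killed by $\J_{d,R}$, contradicting maximality of $\J_{d''}$. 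As written, with only part (2) invoked, the proof does not close.

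The paper's own argument is different and more direct: take two maximal annihilators $\J_1\neq\J_2$, with the cell of $\J_1$ maximal among such, and consider $\U_{\lambda,R}/(\J_{1,R}+\J_{2,R})$, through which $L$ factors. Using (c6) (equivalently, the Duflo/socle structure of $\U_\lambda/\J_i$: the image of $\J_2$ in $\U_\lambda/\J_1$ is a nonzero subbimodule, hence contains the simple socle $\M_{d_1}$, so the quotient lives on $\partial\Orb$), this quotient is filtered by $\M_{w,R}$ with $w$ in strictly smaller cells. Since $L\neq 0$, some $\M_{w,R}\otimes L\neq 0$, hence $\J_{w^{-1},R}L=0$ with $\J_{w^{-1}}$ strictly containing $\J_1$, contradicting maximality. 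Note that this route needs neither the reduction to a common two-sided cell nor Proposition \ref{Prop:char_p_length}.
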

\begin{proof}
Let $\J_1,\J_2$ be two maximal primitive ideals with the required property.
Assume, in addition, that the two-sided cell corresponding to $\J_1$ is maximal
possible.  By (c6), $\U_{\lambda,R}/(\J_{1,R}+\J_{2,R})$ is filtered by $\M_{w,R}$
for $w$ lying in two-sided cells smaller than $\dcell$. If $\M_{w,R}\otimes_{\U_{\lambda,R}}L\neq 0$,
then $\J_{w^{-1},R}L=0$. The two-sided cell of $\J_{w^{-1}}$ is strictly smaller than those of
$\J_1,\J_2$. A contradiction. The inclusion $d\in \dcell$ follows from the definition
of   $\U_{\lambda,\F}^\chi\operatorname{-mod}_{\leqslant \dcell}$.
\end{proof}

\begin{Rem}
So (2) of Theorem \ref{Thm:main} says that the dimension polynomial of $L$  has degree
equal to $\frac{1}{2}\operatorname{GK-}\dim (\U_\lambda/\J)$, where $\J$ is the maximal
primitive ideal in $\U_\lambda$ such that $\J_{R}L=0$. This statement makes
sense for other classes of quantizations, e.g. for those of symplectic singularities
and we expect it to hold in this setting.
\end{Rem}

\subsection{Etingof's construction}\label{SS_Etingof}
We will prove Proposition \ref{Prop:0_dim_growth} by adapting an argument due to Etingof
from categories $\mathcal{O}$ for type A rational Cherednik algebras to BGG categories $\mathcal{O}$.

Let us explain this argument. Let $R$ be a finite localization of $\Z$ such that $\lambda\in \h^*_R$. Then we can consider the Verma module $\Delta_R(\lambda)$ with highest weight $\lambda+\rho$. Note that $\Delta_R(\lambda)$ is naturally graded
with highest vector in degree $0$ and the operators $f_\alpha$ have degree $1$ for a simple root $\alpha$.
Let $p$ be a prime number invertible in $R$. Set $\Delta_{\F_p}(\lambda):=\F_p\otimes_{R}\Delta_R(\lambda),
\Delta_{\mathbb{Q}}(\lambda):=\mathbb{Q}\otimes_R\Delta_R(\lambda)$. Let $L_{\mathbb{Q}}(\lambda)$ be the unique
irreducible quotient of $\Delta_{\mathbb{Q}}(\lambda)$. The module $\Delta_{\F_p}(\lambda)$ has a unique
graded simple quotient, let us denote it by $L_{\F_p}(\lambda)$. The modules $L_{\F_p}(\lambda')$ for
$\lambda'\in W\lambda$ are absolutely irreducible and  pairwise non-isomorphic and so their base changes to $\F$ form a complete collection of the irreducibles in $\mathcal{U}_{\lambda,\F}^0\operatorname{-mod}$.
Note that $L_{\mathbb{Q}}(\lambda)$ and $L_{\F_p}(\lambda)$ are graded quotients. Let $L^i_{\mathbb{Q}}(\lambda),L^i_{\F_p}(\lambda)$
denote the $i$th graded component.

The following lemma is due to Etingof (in the Cherednik case).

\begin{Lem}\label{Lem:dim_growth}
Suppose that $p$ is sufficiently large. Then there is a positive integer $N_\lambda$ independent of $p$ such that $\dim L^i_{\F_p}(\lambda)=
\dim L^i_{\mathbb{Q}}(\lambda)$ for $i< p/N_\lambda$.
\end{Lem}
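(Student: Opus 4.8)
The plan is to compare the two graded modules $L_R(\lambda)$-reductions degree by degree, using that everything in sight is defined over $R$ and flat after a finite localization. First I would observe that the Verma module $\Delta_R(\lambda)$ is a free $R$-module of finite rank in each graded degree, and that the maximal graded proper submodule $N_{\mathbb{Q}}(\lambda)\subset \Delta_{\mathbb{Q}}(\lambda)$ (whose quotient is $L_{\mathbb{Q}}(\lambda)$) is also graded. The key point is that $N_{\mathbb{Q}}(\lambda)$ is generated by finitely many singular vectors, living in degrees $\leqslant D_\lambda$ for some $D_\lambda$ independent of $p$ (it depends only on $\lambda$, i.e.\ on the finitely many $W\cdot\lambda$-linkage data); after a further finite localization these singular vectors can be chosen in $\Delta_R(\lambda)$, generating an $R$-submodule $N_R(\lambda)$ with $\Delta_R(\lambda)/N_R(\lambda)$ flat over $R$ and with $\mathbb{Q}$-fibre $L_{\mathbb{Q}}(\lambda)$.

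Next I would show that for $p$ large the reduction $\F_p\otimes_R(\Delta_R(\lambda)/N_R(\lambda))$ agrees with $L_{\F_p}(\lambda)$ in low degrees. The quotient $Q_{\F_p}:=\F_p\otimes_R(\Delta_R(\lambda)/N_R(\lambda))$ surjects onto $L_{\F_p}(\lambda)$, so it suffices to bound the degrees where the surjection can fail to be an isomorphism, i.e.\ the degrees of generators of $\ker(Q_{\F_p}\twoheadrightarrow L_{\F_p}(\lambda))$. Any graded submodule of $Q_{\F_p}$ is generated by singular vectors for $\g_{\F_p}$, and such a singular vector of weight $\lambda+\rho-\beta$ forces $\beta$ to be a positive sum of roots with $\langle\lambda+\rho,\alpha^\vee\rangle \equiv \langle\text{something}\rangle \pmod p$ type linkage in the affine Weyl group at level $p$; the smallest such $\beta$ not already coming from the characteristic-zero picture has height growing linearly in $p$, of the form $\geqslant p/N_\lambda$ for a constant $N_\lambda$ governed by the denominators and pairings attached to $\lambda$. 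Hence in degrees $i< p/N_\lambda$ the map $Q_{\F_p}\to L_{\F_p}(\lambda)$ is an isomorphism, and there $\dim L^i_{\F_p}(\lambda)=\dim_{\F_p}Q^i_{\F_p}=\operatorname{rk}_R(\Delta_R(\lambda)/N_R(\lambda))^i=\dim L^i_{\mathbb{Q}}(\lambda)$, using flatness.

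The main obstacle is making the bound $p/N_\lambda$ precise: one must control, uniformly in $p$, the lowest degree in which a \emph{new} singular vector (not a reduction of one of the finitely many characteristic-zero singular vectors generating $N_R(\lambda)$) can appear in $\Delta_{\F_p}(\lambda)$. This is where the $p$-linkage principle for $\g_{\F_p}$ enters: new singular vectors correspond to elements $w$ of the affine Weyl group $W^a_p$ (acting with the $\rho$-shift at level $p$) with $w\cdot\lambda \uparrow \lambda$ but $w$ not in the finite integral Weyl group $W_{[\lambda]}$, and the minimal height of $\lambda - w\cdot\lambda$ over such $w$ is bounded below by $p$ divided by a constant depending only on the root system and the denominator $x$ of $\lambda$ — this constant is $N_\lambda$. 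I would extract $N_\lambda$ from the explicit description of walls of $p$-alcoves containing $\lambda$, noting it does not depend on $p$ once $p$ is coprime to $x$ and large. The remaining steps (graded freeness, choosing $R$, flatness of the quotient) are routine localization arguments and I would not belabor them.
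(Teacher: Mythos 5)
Your strategy --- spread $L_{\mathbb{Q}}(\lambda)$ over $R$, reduce mod $p$, and use the linkage principle to show that no new singular vectors can appear below degree $p/N_\lambda$ --- is genuinely different from the paper's proof, which deforms the contravariant form to $B_{\lambda+t\rho}$ and compares the orders of vanishing of $\det B^i_{\lambda+t\rho}$ at $t=0$ (i.e.\ the Jantzen filtrations) over $\mathbb{Q}$ and over $\F_p$. The arithmetic you invoke at the end is essentially the paper's computation of the roots $z^i_j$ in \eqref{eq:gamma}. But as written your argument has two gaps. First, maximal (graded) submodules of Verma modules are \emph{not} in general generated by singular vectors: if the Kazhdan--Lusztig $\mu$-function satisfies $\mu(x,y)\geqslant 2$, then $\operatorname{rad}\Delta(x\cdot\lambda)/\operatorname{rad}^2\Delta(x\cdot\lambda)$ contains $L(y\cdot\lambda)$ with multiplicity $\geqslant 2$, whereas the submodule generated by all singular vectors has head containing at most one copy of each $L(y\cdot\lambda)$ (the relevant Hom space between Vermas is at most one-dimensional); and the lemma must apply to every $\lambda'\in W\lambda$, not just the dominant one where the BGG resolution saves you. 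This is repairable: over $R$ take the saturation $N_R(\lambda):=N_{\mathbb{Q}}(\lambda)\cap\Delta_R(\lambda)$, which makes the quotient $R$-free of the correct graded ranks without any claim about generators; and in characteristic $p$ you only need the true statement that a nonzero graded submodule of $Q_{\F_p}$ contains a singular vector in its minimal degree.

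The more serious gap is that linkage only excludes singular vectors whose weight $\lambda-\beta$ is \emph{not} congruent-but-unequal to some $w\cdot\lambda$ with $w$ in the finite integral Weyl group; all the weights $w\cdot\lambda$ themselves sit in degrees $\leqslant D_\lambda$ for a fixed constant, and in that range linkage says nothing. A \emph{new} singular vector of weight $w\cdot\lambda$ can appear in $Q_{\F_p}$ for bad $p$ --- this is exactly the statement that the reduction mod $p$ of the irreducible $L_{\mathbb{Q}}(\lambda)$ need not remain irreducible --- and if it does, then $\dim L^i_{\F_p}(\lambda)<\dim L^i_{\mathbb{Q}}(\lambda)$ already in some bounded degree $i\leqslant D_\lambda$. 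Your phrase ``not already coming from the characteristic-zero picture'' hides this case rather than handling it. It can be closed by a separate genericity step: for each of the finitely many degrees $i\leqslant D_\lambda$, the rank of the fixed $R$-matrix of $\bigoplus_\alpha e_\alpha\colon Q^i_R\to\bigoplus_\alpha Q^{i-1}_R$ drops mod $p$ only when $p$ divides one of finitely many fixed nonzero minors, so one further finite localization of $R$ suffices. The paper's determinant argument avoids this case division altogether: the leading coefficient $C^i$ of $\det B^i_{\lambda+t\rho}$ is shown to be nonzero mod $p$ uniformly for $i<p$ via generic irreducibility of baby Verma modules, and the roots are controlled by the same congruence you use.
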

\begin{proof}
Recall that, for any $\lambda'\in \h^*_R$, the module $\Delta_R(\lambda')$
has a unique (up to scaling by elements of $R$) contravariant form $B_{\lambda'}$,
and different graded components are orthogonal with respect to $B_{\lambda'}$. We  assume that
$B$ is nondegenerate on the highest weight component.

We can also consider a one-parameter
deformation $\Delta_{R[t]}(\lambda+t\rho)$, where $t$ is an independent variable. The module
$\Delta_{R[t]}(\lambda+t\rho)$ comes with a contravariant
form $B_{\lambda+t\rho}$. Let us write $B^i_\lambda$ for the restriction of $B_\lambda$ to $\Delta^i_R(\lambda)$.
The notation $B^i_{\lambda+t\rho}$ has the similar meaning.

We note that $L_{\mathbb{Q}}(\lambda)$ is the quotient of $\Delta_{\mathbb{Q}}(\lambda)$ by the radical of $B_{\lambda,\mathbb{Q}}$, the specialization of $B_\lambda$ to $\Q$ and, similarly, $L_{\F_p}(\lambda)$ is the quotient of $\Delta_{\F_p}(\lambda)$ by the radical of $B_{\lambda,\mathbb{F}_p}$.   So we need to check that for $i<p/N_\lambda$, the radicals of $B^i_{\lambda,\mathbb{F}_p}$ and of $B^i_{\lambda,\mathbb{Q}}$ have the same dimension.

Consider the finitely generated $\mathbb{K}[t]$-module $\Delta^i_{\mathbb{K}[t]}(\lambda+t\rho)$, where
$\mathbb{K}=\mathbb{Q}$ or $\F_p$. The form $B^i_{\lambda+t\rho, \mathbb{K}}$ defines a descending filtration
$F^j\Delta^i_{\mathbb{K}}(\lambda)$ (the Jantzen filtration): by definition, $F^0\Delta^i_{\mathbb{K}}(\lambda)=
\Delta^i_{\mathbb{K}}(\lambda)$ and $F^j\Delta^i_{\mathbb{K}}(\lambda)$ is the radical of the well-defined form
$t^{-j}B^i_{\lambda+t\rho, \mathbb{K}}|_{t=0}$ on $F^{j-1}\Delta^i_{\mathbb{K}}(\lambda)$. It is clear that
$\dim F^j\Delta^i_{\mathbb{Q}}(\lambda)\geqslant \dim F^j\Delta^i_{\F_p}(\lambda)$ for all $p$. The equality
$\dim F^j\Delta^i_{\mathbb{Q}}(\lambda)= \dim F^j\Delta^i_{\F_p}(\lambda)$ for all $j$ (that will immediately
imply what we need, which is the $j=1$ case) for $i<p/N_\lambda$ will follow if we check that
the order of vanishing of $f^i_p(t):=\det B^i_{\lambda+t\rho,\F_p}$ at $t=0$ coincides with the order
of vanishing of $f^i(t):=\det B^i_{\lambda+t\rho,\mathbb{Q}}$ at $t=0$ -- for example, the former
equals $\sum_j j\dim F^j\Delta^i_{\F_p}(\lambda)$. Clearly, $f^i_{p}(t)$ is obtained from $f(t)$
by reduction mod $p$.

The polynomial $f^i(t)$ can be decomposed as $C^it^{n_0}\prod_{j=1}^k (t-z^i_j)^{n_j}$, where $C^i\in R$ and
$z^i_j$ are nonzero elements that
lie, a priori, in the algebraic closure of $\mathbb{Q}(t)$. In fact, $f^i(z)=0$ means that
there is a singular vector in $\Delta^{i'}_{\lambda+z\rho}(\lambda)$, where $0<i'\leqslant i$.
In particular, there is $w\in W, w\neq 1,$ such that $\langle \lambda+z\rho-w(\lambda+z\rho),\rho^\vee\rangle=i'$.
This is equivalent to
\begin{equation}\label{eq:gamma}
z=\langle \rho-w\rho,\rho^\vee\rangle^{-1}(i'-\langle \lambda- w\lambda, \rho\rangle)
\end{equation}
In particular, $f^i(z)=0$ implies $z\in R$.

Therefore what we need
to prove is that, for $i<p/N_\lambda,$ we have $C^i,z^i_j\neq 0$ modulo $p$.

Let us show that $C^i$ is nonzero mod $p$.
Consider the baby Verma module $\underline{\Delta}_{\F_p[t]}(\lambda+t\rho)=\Delta_{\F_p[t]}(\lambda+t\rho)/
(\mathfrak{n}^-_{\F_p})^{(1)}\Delta_{\F_p[t]}(\lambda+t\rho)$. Note that $\underline{\Delta}^i_{\F_p[t]}(\lambda+t\rho)=
\Delta^i_{\F_p[t]}(\lambda+t\rho)$ as long as $i<p$. It is well-known that for a generic $z\in \F$ the module
$\underline{\Delta}^i_{\F}(\lambda+z\rho)$ is irreducible. In particular, $f^i_p(t)\neq 0$ as a polynomial
so $C^i\neq 0$ mod $p$.

Now let us show that $z^i_j\neq 0$ mod $p$ using (\ref{eq:gamma}).
Recall that $x$ stands for the least common multiple of the denominators of the coordinates
of $\lambda$ in the basis of simple roots. Then $xi'-x\langle \lambda-w\lambda,\rho\rangle$
is a nonzero multiple of $p$. Clearly as long as $p$ is large enough, there is $N_\lambda$
such that for $i< p/N_{\lambda}$, we have $0<xi'-x\langle \lambda-w\lambda,\rho\rangle<p$.
\end{proof}

\subsection{Proof of Proposition \ref{Prop:0_dim_growth}}
Let us deduce Proposition \ref{Prop:0_dim_growth} from Lemma \ref{Lem:dim_growth}.

\begin{proof}[Proof of Proposition \ref{Prop:0_dim_growth}]
The GK dimension of $L(\lambda)$ equals to $\frac{1}{2}\operatorname{GK-}\dim(\U_\lambda/\operatorname{Ann}L(\lambda))$
and the latter coincides with $\dim\Orb_{\dcell}/2$. Therefore $\sum_{j=0}^i \dim L^j(\lambda)$ is a polynomial
in $i$ of degree $\dim\Orb_{\dcell}/2$. So $\dim L_{\F}(\lambda)\geqslant \sum_{j=0}^i \dim L^j_{\F_p}(\lambda)
=\sum_{j=0}^i \dim L^j(\lambda)$, where the equality follows from Lemma \ref{Lem:dim_growth}, as long as $i< p/N_\lambda$. This shows that $\dim L_{\F}(\lambda)$ is bounded below by a degree $\dim \Orb_{\dcell}/2$ polynomial in $p$.

Now let us show that $\dim L_\F(\lambda)$ is bounded from above by a degree $\dim \Orb_{\dcell}/2$ polynomial
in $p$. Note that $L_\F(\lambda)$ is a quotient of the baby Verma module $\underline{\Delta}_\F(\lambda)$.
We have $\underline{\Delta}^j_\F(\lambda)=0$ for $j\geqslant 2\langle\rho,\rho^\vee\rangle p$ hence
$L^j_\F(\lambda)=0$ for such $j$. We claim that, for any $m_1>1>m_2$, there is a constant $M$ such that
\begin{equation}\label{eq:dim_bound}\sum_{j=0}^{\lfloor m_1p\rfloor}\dim L^j_\F(\lambda)\leqslant
M \sum_{j=0}^{\lfloor m_2p\rfloor } \dim L^j_\F(\lambda), \end{equation}
this will establish the upper bound thanks to Lemma \ref{Lem:dim_growth}.

Let us write $U_{\leqslant k}(\mathfrak{n}^-_{\F})$ for $k$th filtration term with respect to the PBW filtration on $U(\mathfrak{n}^-_\F)$
and $U^{\leqslant i}(\mathfrak{n}^-_{\F})$ for $\bigoplus_{j=0}^i U^{j}(\mathfrak{n}^-_\F)$.
For any $i$, we have
\begin{equation}\label{eq:filtr_compat} \bigoplus_{j=0}^i L_\F^j(\lambda)=U^{\leqslant i}(\mathfrak{n}^-_{\F}) L_\F^0(\lambda).
\end{equation}
Note that the filtrations
$U^{\leqslant i}(\mathfrak{n}^-_{\F})$ and $U_{\leqslant k}(\mathfrak{n}^-_\F)$ are compatible in the sense that there are
constants $c_1<1<c_2$ such that $U_{\leqslant c_1 i}(\mathfrak{n}^-_\F)\subset \U^{\leqslant i}(\mathfrak{n}^-_\F)\subset
U_{\leqslant c_2 i}(\mathfrak{n}^-_\F)$. So, thanks to (\ref{eq:filtr_compat}), (\ref{eq:dim_bound}) will follow if we show that, for any $m\in \Z_{>1}$ there is $M$ such that for any $i$ we have
\begin{equation}\label{eq:dim_bound1}
\dim U_{\leqslant mi}(\mathfrak{n}^-_\F)\leqslant M\dim U_{\leqslant i}(\mathfrak{n}^-_\F).
\end{equation}
Let $x_1,\ldots,x_n$ be a basis of $\mathfrak{n}^-_{\F}$. (\ref{eq:dim_bound1}) will follow if we show
that every element of $U_{\leqslant mi}(\mathfrak{n}^-_\F)$ can be written as a sum of elements of
the form $PQ$, where $P$ is an ordered monomial in $x_1^{i+1},\ldots, x_n^{i+1}$ of degree $\leqslant m$ and
$Q$ is an element  of $U_{\leqslant i}(\mathfrak{n}^-_\F)$. The latter claim follows from the analogous one
on the associated graded level, which is straightforward.
\end{proof}

\begin{Rem}\label{upper_bound_alt}
Let us mention an alternative way to the prove the upper bound for  $\dim L_\F(\lambda)$
established above. Let $M$ be an object in category $\mathcal{O}$ over $\Ce$ of Gelfand-Kirillov dimension $d$.
We can find a $\g$-module $M_R$ is defined over $R$ with $M\cong M_R\otimes _R \Ce$,  and consider its based change $M_\F$ to a field of almost any prime characteristic. Let $\overline{M_\F}$ be the reduction of $M_\F$
by the zero $p$-central character. Then one can check that:

i) $\dim\overline{M_\F}=O(p^d)$.

ii)  The space  $K_0(\U_{\lambda,\F}\operatorname{-mod}_{\leqslant \dcell}^{\chi})$ is spanned
by classes $\dim\overline{L_\F}$, where $L$ runs over the set of irreducible module in category $\mathcal{O}$
belonging to cells $\dcell'\leqslant \dcell$.

These two properties clearly imply the upper bound.
\end{Rem}

\subsection{Degeneration map}
We have a one parameter subgroup $\gamma: \F^\times\rightarrow G_{\F}$ with $\gamma(t)\chi=t^{2}\chi$.
Via $\gamma$, the group $\F^\times$ acts on the sheaf of algebras $\U_{\F}|_{\F\chi}$, where the action on
the base $\F\chi$ is by dilations. This gives rise to the degeneration map $\delta: K_0(\U_\F^\chi\operatorname{-mod})
\rightarrow K_0(\U_\F^0\operatorname{-mod})$. Since $\F^\times$ acts trivially on the Harish-Chandra
center, we see that the map restricts to
\begin{equation}\label{eq:degener} \delta: K_0(\U_{\lambda,\F}^\chi\operatorname{-mod})
\rightarrow K_0(\U_{\lambda,\F}^0\operatorname{-mod}). \end{equation}

The following standard lemma summarizes basic properties of the degeneration map (\ref{eq:degener}).

\begin{Lem}\label{Lem:degener_properties}
The following are true.
\begin{enumerate}
\item Under the identifications $K_0(\U_{\lambda,\F}^\chi\operatorname{-mod})\cong
H_*(\mathcal{B}_e,\C), K_0(\U_{\lambda,\F}^0\operatorname{-mod})\cong
H_*(\mathcal{B},\C)$ the map $\delta$ coincides with the push-forward map
$H_*(\mathcal{B}_e,\C)\rightarrow H_*(\mathcal{B},\C)$. In particular, it is independent of $p$ and $W^a$-equivariant.
\item The map $\delta$ intertwines the endomorphisms $\operatorname{pr}_\lambda(V(\mu)_\F\otimes \bullet)$
for $p\gg \langle \rho^\vee,\mu\rangle$.
\item The map $\delta$ preserves the dimension polynomials.
\end{enumerate}
\end{Lem}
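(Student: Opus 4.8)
The plan is to prove the three assertions in order, each of them reducing to a known identification or to a specialization argument. For part (1), the starting point is that the $\C^\times$-action via $\gamma$ degenerates $\U_{\F}|_{\F\chi}$ to $\U_{\F}|_{\F\cdot 0}$, and under the localization equivalence of Section \ref{SS_loc_char_p} this degeneration is realized geometrically: the family of Springer-type subvarieties $\mathcal{B}_{t\chi}^{(1)}$ over $\F$ specializes, as $t\to 0$, to the whole flag variety $\mathcal{B}^{(1)}$, so that the degeneration map on $K_0$ becomes, under $K_0(\operatorname{Coh}(\mathcal{B}_\chi))\cong K_0(\operatorname{Coh}(\mathcal{B}))$ via $H_*$, the specialization map in Borel--Moore homology attached to the inclusion $\mathcal{B}_e\hookrightarrow \mathcal{B}$. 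This is the ordinary push-forward $H_*(\mathcal{B}_e,\C)\to H_*(\mathcal{B},\C)$; $p$-independence is then immediate from Proposition \ref{Cor:simples_char_p}(1) and the fact that the topology of $\mathcal{B}_e\hookrightarrow\mathcal{B}$ does not see $p$. The $W^a$-equivariance follows because the push-forward is a map of $H_*(\mathcal{B},\C)$-modules and the $W^a$-action on both sides is the Springer/affine-Weyl action, which is visibly compatible with push-forward (indeed, $H_*(\mathcal{B}_e)$ is the $\chi$-fixed points construction and push-forward to $H_*(\mathcal{B})$ is $W$-equivariant by Springer theory, with the lattice part acting by the same line-bundle twists on both sides).

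For part (2) I would argue that the endomorphism $\operatorname{pr}_\lambda(V(\mu)_\F\otimes\bullet)$ is, on both categories $\U^\chi_{\lambda,\F}\operatorname{-mod}$ and $\U^0_{\lambda,\F}\operatorname{-mod}$, given by tensoring with a fixed Harish-Chandra bimodule (the $R$-form $\operatorname{pr}_\lambda(V(\mu)\otimes\U_\lambda)_R$, available for $p\gg\langle\rho^\vee,\mu\rangle$ by conditions (c3)--(c4)) and then applying the action of $D^b_{HC}(\U_{\lambda,R}\operatorname{-bimod})$ recalled before Lemma \ref{Lem:action_compat}. Since the $\C^\times$-action via $\gamma$ fixes the Harish-Chandra center and acts on $\U_\F|_{\F\chi}$ compatibly with the bimodule action (the bimodule carries no $\chi$-grading, it lives over the whole $\g^*$), the operator $\delta$ commutes with the bimodule action; concretely, on $K_0$ both sides become multiplication by the same element of $\C W_{[\lambda]}$, exactly as in the proof of Lemma \ref{Lem:product_length}(2), and this element does not depend on whether we work at $\chi$ or at $0$. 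Equivalently, via part (1) this is just the statement that push-forward $H_*(\mathcal{B}_e)\to H_*(\mathcal{B})$ intertwines the two actions of the translation/wall-crossing operators, which follows from Lemma \ref{Lem:action_compat} applied on both sides.

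Part (3) is where the main subtlety lies. The degeneration map sends the class $[L]$ of a simple $\U^\chi_{\lambda,\F}$-module to a $\Z$-linear combination $\sum_j a_j [L_j]$ of classes of simples in $\U^0_{\lambda,\F}\operatorname{-mod}$, and I want $\dim L = \sum_j a_j\dim L_j$ as polynomials in $p$. The point is that the $\C^\times$-equivariant structure makes $\delta$ literally a ``flat degeneration'' at the level of the actual modules, not just of $K_0$-classes: one spreads $L$ out to a $\C^\times$-equivariant family over $\F\chi$ (a module over $\U_\F|_{\F\chi}$, flat over $\F[\F\chi]=\F[t]$, with generic fibre $L$) and its special fibre at $t=0$ is a $\U^0_{\lambda,\F}$-module whose class in $K_0$ is $\delta([L])$ and whose dimension equals $\dim L$ because dimension is constant in a flat family over $\F[t]$. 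Hence $\dim L = \dim(\text{special fibre}) = \sum_j a_j\dim L_j$, using that $\dim$ is additive on $K_0$. Since by Proposition \ref{Cor:simples_char_p}(2) all these dimensions are polynomials in $p$ and by part (1) the coefficients $a_j$ are independent of $p$, the polynomial identity holds identically. The hard part, and the only place needing care, is to produce the flat $\C^\times$-equivariant family with the asserted special fibre and to check that its class is $\delta([L])$; this is the standard ``deformation to the normal cone''-type argument, which I would carry out using the localization picture of Section \ref{SS_loc_char_p} (where such families of coherent sheaves on $T^*\mathcal{B}^{(1)}_\F$ are transparently flat over the line $\F\chi$), and then transport back through the equivalence $\LL_{\lambda'}$.
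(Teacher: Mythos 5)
Your proposal is correct and follows essentially the same route as the paper: part (1) by transporting the degeneration through the localization equivalences to geometry, and parts (2), (3) by the bimodule-action and flat-family arguments that the paper dismisses as straightforward. The only detail the paper makes explicit in (1) that you elide is that the splitting bundles at $\chi$ and at $0$ are normalized compatibly (Lemma \ref{Lem:K_0_indep} together with the projection formula), which is what guarantees that the two identifications $K_0(\Coh(\mathcal{B}_{t\chi}^{(1)}))\xrightarrow{\sim}K_0(\U_{\lambda,\F}\operatorname{-mod}^{t\chi})$ really intertwine the degeneration maps.
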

\begin{proof}
(2) and (3) are straightforward, let us prove (1). We can consider the categories
$\operatorname{Coh}(\mathcal{D}^\lambda_{\mathcal{B}_\F})^{t\chi}$ (here $t=0,1$) of all coherent sheaves of $\mathcal{D}^\lambda_{\mathcal{B}_\F}$-modules supported at the preimage of $t\chi$
and the corresponding derived category
$D^b(\operatorname{Coh}(\mathcal{D}^\lambda_{\mathcal{B}_\F}))^{t\chi}$. The derived
equivalence $R\Gamma$ induces  identifications $K_0(\operatorname{Coh}(\mathcal{D}^\lambda_{\mathcal{B}_\F})^{t\chi})
\cong K_0(\U_{\lambda,\F}\operatorname{-mod}^{t\chi})$ that are compatible with the degeneration maps.
Also note that the Chern character isomorphisms intertwine the degeneration maps. So it remains
to show that the identifications $K_0(\Coh(\mathcal{B}_{t\chi}^{(1)}))\xrightarrow{\sim}
K_0(\operatorname{Coh}(\mathcal{D}^\lambda_{\mathcal{B}_\F})^{t\chi})$ intertwine the
degeneration maps. This is a consequence of Lemma \ref{Lem:K_0_indep} and the projection formula.
\end{proof}


\subsection{Proof of (2) of Theorem \ref{Thm:main}}
We start with the following proposition.

\begin{Prop}\label{Prop:degen_cell_image}
Let $L$ be a simple in $\U_{\lambda,\F}^\chi\operatorname{-mod}_{\leqslant \dcell}$ but not in the
smaller filtration terms. Then the projection of $\delta([L])$ to the $\dcell$-isotypic component
of $K_0(\U_{\lambda,\F}^0\operatorname{-mod})_{\leqslant \dcell}$ is nonzero.
\end{Prop}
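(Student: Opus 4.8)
The plan is to deduce the statement from Propositions~\ref{Prop:char_p_length} and~\ref{Prop:0_dim_growth} together with the elementary properties of the degeneration map collected in Lemma~\ref{Lem:degener_properties}, and from Lemma~\ref{Lem:prim_annihilator}. First I would reformulate: since by Lemma~\ref{Lem:degener_properties}(1) the map $\delta$ is $W_{[\lambda]}$-equivariant, and since by part~(1) of Theorem~\ref{Thm:main} (already proved) the filtration $K_0(\bullet)_{\leqslant\dcell}$ on both sides coincides with the Serre span of the simples lying in cells $\dcell'\leqslant\dcell$ (equivalently, with the sum of the irreducible $W_{[\lambda]}$-submodules in families $\dcell'\leqslant\dcell$), the map $\delta$ is automatically compatible with these filtrations. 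In particular $\delta([L])\in K_0(\U_{\lambda,\F}^0\operatorname{-mod})_{\leqslant\dcell}$, and the claim about the $\dcell$-isotypic projection is exactly that $\delta([L])\notin K_0(\U_{\lambda,\F}^0\operatorname{-mod})_{<\dcell}$. So I would argue by contradiction and assume the contrary.

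Write $\overline{L}$ for the degeneration of $L$, an honest $\U_{\lambda,\F}^0$-module with $[\overline{L}]=\delta([L])$. By part~(1) again $K_0(\U_{\lambda,\F}^0\operatorname{-mod})_{<\dcell}$ is spanned by classes of simples in cells $\dcell'<\dcell$; since $[\overline{L}]$ is effective, all composition factors of $\overline{L}$ then lie in such cells. By Lemma~\ref{Lem:prim_annihilator} the maximal primitive ideal $\J$ with $\J_R L=0$ is $\J_d$ with $d\in\dcell$, so $\VA(\U_\lambda/\J)=\overline{\Orb}_\dcell$; and for each composition factor $L'$ of $\overline{L}$ the analogous orbit is $\Orb_{\dcell'}$ with $\dcell'<\dcell$, hence of dimension at most $d:=\max_{\dcell'<\dcell}\dim\Orb_{\dcell'}<\dim\Orb_\dcell$.

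Now I would compare the growth in $m$ of $\ell(\operatorname{pr}_\lambda(V^m_\F\otimes L))$ and $\ell(\operatorname{pr}_\lambda(V^m_\F\otimes\overline{L}))$. Applying Proposition~\ref{Prop:char_p_length} to $L$ gives $c>0$ with $\ell(\operatorname{pr}_\lambda(V^m_\F\otimes L))\geqslant c\,m^{\dim\Orb_\dcell}$ for $p\gg m$. On the other hand $\overline{L}$ has finitely many composition factors with multiplicities independent of $p$ (because $[\overline{L}]=\delta([L])$ is independent of $p$ by Lemma~\ref{Lem:degener_properties}(1) and Proposition~\ref{Cor:simples_char_p}(1)); applying Proposition~\ref{Prop:char_p_length} to each of the finitely many simple $\U_{\lambda,\F}^0$-modules occurring in $\overline{L}$ and summing, I get a constant $C$, independent of $p$ and $m$, with $\ell(\operatorname{pr}_\lambda(V^m_\F\otimes\overline{L}))\leqslant C\,m^{d}$ for $p\gg m$. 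The link between the two is that $\operatorname{pr}_\lambda(V^m_\F\otimes\overline{L})$ is the degeneration of $\operatorname{pr}_\lambda(V^m_\F\otimes L)$ — the degeneration is compatible with the exact functor $\operatorname{pr}_\lambda(V^m_\F\otimes\bullet)$, which involves only the Harish-Chandra center on which $\F^\times$ acts trivially (cf.\ Lemma~\ref{Lem:degener_properties}(2)) — and that the degeneration of a module is its associated graded for a suitable filtration, so it has length at least that of the module. Hence $c\,m^{\dim\Orb_\dcell}\leqslant\ell(\operatorname{pr}_\lambda(V^m_\F\otimes L))\leqslant\ell(\operatorname{pr}_\lambda(V^m_\F\otimes\overline{L}))\leqslant C\,m^{d}$ for every $m$ (choosing $p$ large relative to that $m$), which is impossible for $m$ large since $d<\dim\Orb_\dcell$.

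The step I expect to be the main obstacle is the inequality $\ell(\overline{N})\geqslant\ell(N)$ for the degeneration $\overline{N}$ of a module $N$ — i.e.\ that passing to the degeneration can only refine a composition series — together with checking that this degeneration commutes with $\operatorname{pr}_\lambda(V^m_\F\otimes\bullet)$ on the level of modules (not just $K_0$). Note the dimension statement Lemma~\ref{Lem:degener_properties}(3) alone would \emph{not} suffice: it would only give a comparison weighted by the dimensions of the simple constituents, where controlling the weights would require the still-unproved part~(2) of Theorem~\ref{Thm:main} for general $\chi$. (An alternative to the length argument, which I would keep in reserve, is to use that the pushforward $H_*(\mathcal{B}_e,\C)\to H_*(\mathcal{B},\C)$ — which is $\delta$ by Lemma~\ref{Lem:degener_properties}(1) — is injective, hence, being $W_{[\lambda]}$-equivariant and filtered as above, strict for the two cell filtrations, so that $\delta$ is injective on each associated graded piece; but this relies on surjectivity of $H^*(\mathcal{B})\to H^*(\mathcal{B}_e)$, which I would rather not invoke.) The remaining points — independence of $p$ of the multiplicities in $\overline{L}$ and the orbit identifications via Lemma~\ref{Lem:prim_annihilator} — are routine.
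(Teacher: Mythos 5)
Your proposal is correct and follows essentially the same route as the paper: argue by contradiction, use Lemma \ref{Lem:prim_annihilator} and Proposition \ref{Prop:char_p_length} to get a lower bound of order $m^{\dim\Orb_\dcell}$ for $\ell(\operatorname{pr}_\lambda(V^m_\F\otimes L))$ and an upper bound of strictly smaller degree for the degenerated class, and link the two via the compatibility of $\delta$ with $\operatorname{pr}_\lambda(V(\mu)_\F\otimes\bullet)$ together with the inequality $\ell(N)\leqslant\ell(\delta[N])$. The paper states that last inequality without proof as a "standard" fact, exactly the step you flag as the main thing to check.
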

\begin{proof}
Assume the contrary. Let $\delta[L]=\sum_{i=1}^k [L_i]$, where $L_i$ are  simples in
$\U^0_{\lambda,\F}\operatorname{-mod}$. Note that $k$ is independent of $p$ by (1) of
Lemma \ref{Lem:degener_properties} and the fact that the basis of simples is independent of $p$.
By our assumption, $L_i\in \U_{\lambda,\F}^0\operatorname{-mod}_{<\dcell}$.

Pick a dominant weight $\mu$ and suppose that $p$ is very large. Let $M_\mu$
denote the functor $\operatorname{pr}_\lambda(V(\mu)_\F\otimes\bullet)$. By (2) of Lemma
\ref{Lem:degener_properties}, we have
\begin{equation}\label{eq:delta_commutes_w_prod}
\delta([M_\mu L])=\sum_{i=1}^k [M_\mu L_i].
\end{equation}

Note that for $N\in \U^\chi_{\F}\operatorname{-mod}_\lambda$, we have
\begin{equation}\label{eq:inequal} \ell([N])\leqslant \ell(\delta[N]).\end{equation}
By Proposition \ref{Prop:char_p_length} combined with Lemma \ref{Lem:prim_annihilator},
\begin{equation}\label{eq:sum1}\sum_{\mu, \langle\rho^\vee,\mu\rangle\leqslant m}\dim V(\mu)\ell(M_\mu L)\end{equation}
grows (with respect to $m$) faster than
\begin{equation}\label{eq:sum2}\sum_{i=1}^k\sum_{\mu, \langle\rho^\vee,\mu\rangle\leqslant m}\dim V(\mu)\ell(M_\mu L_i)\end{equation}
But combining (\ref{eq:delta_commutes_w_prod}) with (\ref{eq:inequal}), we see that
(\ref{eq:sum2})$\geqslant$(\ref{eq:sum1}). A contradiction.
\end{proof}

\begin{proof}[Proof of (2) of Theorem \ref{Thm:main}]
By (3) of Lemma \ref{Lem:degener_properties}, the map $\delta: K_0(\U_{\lambda,\F}^\chi\operatorname{-mod})
\rightarrow K_0(\U_{\lambda,\F}^0\operatorname{-mod})$ preserves the dimension polynomials.
For an irreducible $L\in \U_{\lambda,\F}^\chi\operatorname{-mod}_{\leqslant \dcell}$ (but not in
smaller filtration components), we have $\delta([L])=\sum_{i=1}^k [L_i]$, where $k$ is independent of $p$,
the simple $L_i$ belongs to a two-sided cell
$\dcell_i\leqslant \dcell$ and there is $i$ such that $\dcell_i=\dcell$. The dimension polynomial for
$L$ is the sum of the dimension polynomials for the $L_i$'s. Note that $\Orb_{\dcell_i}\subset \overline{\Orb}_{\dcell}$.
By Proposition \ref{Prop:0_dim_growth},
the dimension polynomial of $L$ has degree $\dim \Orb_{\dcell}/2$.
\end{proof}

\section{Application to W-algebras}
In this section we will use Theorem \ref{Thm:main} to prove conjectures from
\cite[Section 7.6]{LO} on the classification of finite dimensional irreducible
representations of W-algebras.

\subsection{Background on W-algebras}\label{SS_W_background}
Finite W-algebras (below we omit the adjective ``finite'') were introduced by Premet in
\cite{Premet1} (with alternative constructions later given by the second author).
These are associative algebras constructed from pairs $(\g,e)$, where $\g$ is a semisimple
Lie algebra over $\C$ and $e\in \g$ is a nilpotent element. Such  a W-algebra is a quantization
of the transverse Slodowy slice to the adjoint orbit $\Orb$ of $e$. The reader is referred to
the survey article  \cite{W_ICM} for details.

Let us recall Premet's definition. Include $e$ into an $\slf_2$-triple $(e,h,f)$. The element
$h$ induces the grading on $\g$ by eigenvalues of $\operatorname{ad}(h)$: $\g=\bigoplus_{i\in \Z}\g(i)$.
Let, as before, $\chi=(e,\cdot)$. The form $\omega(x,y)=\langle\chi,[x,y]\rangle$ is symplectic on
$\g(-1)$. Let us pick a lagrangian subspace $\ell\subset \g(-1)$. Form a subalgebra $\mathfrak{m}
\subset \g$ by $\mathfrak{m}=\bigoplus_{i\leqslant -2}\g(i)\oplus \ell$. Note that $\chi$
is the character of $\mathfrak{m}$ and that $\dim \mathfrak{m}=\frac{1}{2}\dim \Orb$, where
we write $\Orb$ for the orbit of $e$. Then, by definition, the W-algebra $\Walg$ is
the quantum Hamiltonian reduction $[U(\g)/U(\g)\{x-\langle\chi,x\rangle| x\in \mathfrak{m}\}]^{\operatorname{ad}\mathfrak{m}}$.

Let us list some important properties of the W-algebra.

1) The algebra $\Walg$ is naturally independent
of the choice of $\ell$ as was demonstrated in \cite{GG}. Moreover, it comes with a Hamiltonian
action of the group $Q=Z_G(e,h,f)$ by automorphisms.

2) Next, $\Walg$ comes with a filtration induced
from the  filtration on $U(\g)$, where $\deg \g(i)=i+2$. The associated graded for this filtration
is $\C[S]$, the algebra of functions on the Slodowy slice $S=e+\mathfrak{z}_\g(f)$.

3) Also note that the definition of $\Walg$ via the Hamiltonian reduction yields a homomorphism $U(\g)^G\rightarrow
\Walg$. As was checked by Ginzburg, see the footnote for \cite[Question 5.1]{Premet2},
this homomorphism is an isomorphism onto the center
of $\Walg$. So for $\lambda\in \h^*$ we can talk about the central reduction $\Walg_\lambda$.

Now let us discuss a reduction mod $p$ for W-algebras. Note that $\Walg$ is defined over some finite localization $R$
of $\Z$: we can take the Hamiltonian reduction $\Walg_R$ of $\U_R$ and  the properties 1), 2), 3) still hold.
So we can reduce mod $p$ and get the algebra $\Walg_\F:=\F\otimes_R \Walg_R$.

As Premet proved, see, for example, \cite[Theorem 2.1]{Premet4},
one has a central inclusion $\F[S^{(1)}]\hookrightarrow \Walg_\F$.
In \cite[Proposition 4.1]{Premet3}  Premet checked that one has an isomorphism $\U_\F^\chi\cong \operatorname{End}_\F(\mathfrak{m}_\F)\otimes \Walg_\F^\chi$.

\begin{Rem}
Consider $\U_{\F}|_{S^{(1)}}=\F[S^{(1)}]\otimes_{\F[\g^{(1)}]}\U_\F$.
One can strengthen Premet's result and show that  $\U_{\F}/\U_{\F}\{x-\langle\chi,x\rangle, x\in \mathfrak{m}_\F\}|_{S^{(1)}}$ is a Morita equivalence bimodule between $\U_{\F}|_{S^{(1)}}$ and
$\Walg_{\F}$. This follows from \cite{Topley}. From here we see that $\U_{\lambda,\F}/\U_{\lambda,\F}\{x-\langle\chi,x\rangle, x\in \mathfrak{m}_\F\}|_{S^{(1)}}$ is a Morita equivalence bimodule between $\U_{\lambda,\F}|_{S^{(1)}}$
and $\Walg_{\lambda,\F}$.
\end{Rem}

\subsection{Restriction functor for HC bimodules}\label{SS_HC_restr}
In this section we will recall results from \cite{HC} on the restriction functor
between the category of HC $\U$-bimodules and the category of HC $\Walg$-bimodules.

Namely in \cite{HC} the second author has constructed a functor $\bullet_{\dagger}:
\HC(\U)\rightarrow \HC^Q(\Walg)$ to the category of $Q$-equivariant HC $\Walg$-bimodules
(introduced in that paper) with the following properties:
\begin{enumerate}
\item The functor $\bullet_{\dagger}$ is exact, tensor, $\C[\h^*]^W$-bilinear and sends $\U$
to $\Walg$,
\item it maps $\HC_{\overline{\Orb}}(\U)$ to the category $\operatorname{Bimod}_{fin}^Q(\Walg)$
of finite dimensional $Q$-equivariant $\Walg$-bimodules,
\item and kills $\HC_{\partial\Orb}(\U)$.
\item There is a functor $\bullet^{\dagger}: \operatorname{Bimod}_{fin}^Q(\Walg)\rightarrow
\HC_{\overline{\Orb}}(\U)$ that is right adjoint to $\bullet_{\dagger}$.
\item For $\M\in \HC_{\overline{\Orb}}(\U)$, the kernel and the cokernel of the adjunction
unit $\M\rightarrow (\M_{\dagger})^{\dagger}$ are supported on $\partial \Orb$.
\item Let $\M\in \HC(\U)$ and $\mathcal{N}'\subset \M_{\dagger}$ be a $Q$-stable
subbimodule of finite codimension. Then there is a unique maximal subbimodule $\M'\subset \M$
with $\M'_{\dagger}=\mathcal{N}'$ and $\VA(\M/\M')=\overline{\Orb}$.
\end{enumerate}

We will need relative versions of (2)-(5), compare to \cite[Section 3.3.2]{cacti}.
Namely, let us pick an affine subspace $\h^1\subset\h^*$ and write
$\U_{\h^1}:=\C[\h^1]\otimes_{\C[\h^*]^W}\U,\Walg_{\h^1}:=\C[\h^1]\otimes_{\C[\h^*]^W}\Walg$.
Then we get a $\C[\h^1]$-bilinear exact tensor functor $\bullet_{\dagger}:\HC(\U_{\h^1})\rightarrow
\HC^Q(\Walg_{\h^1})$.

Let us write $\HC_{\overline{\Orb}}(\U_{\h^1})$ for the full subcategory of
$\HC(\U_{\h^1})$ consisting of HC bimodules $\mathcal{M}$ with $\VA(\M)\cap \mathcal{N}\subset\overline{\Orb}$.
The notation $\HC_{\partial \Orb}(\U_{\h^1})$ has the similar meaning. We also write
$\HC^Q_{\chi}(\Walg_{\h^1})$ for the category of all bimodules finitely generated
(as left, or equivalently, right) modules over $\C[\h^1]$.  Analogs of (2)-(5) are as follows.

\begin{itemize}
\item[(2$'$)] $\bullet_{\dagger}$ maps $\HC_{\overline{\Orb}}(\U_{\h^1})$ to
$\HC^Q_\chi(\Walg_{\h^1})$,
\item[(3$'$)] $\bullet_{\dagger}$ annihilates $\HC_{\partial\Orb}(\U_{\h^1})$.
\item[(4$'$)] There is a functor $\bullet^{\dagger}: \HC^Q_{\chi}(\Walg_{\h^1})
\rightarrow \HC_{\overline{\Orb}}(\U_{\h^1})$ right adjoint to $\bullet_{\dagger}$.
\item[(5$'$)] For $\M\in \HC_{\overline{\Orb}}(\U_{\h^1})$, the kernel and the cokernel
of the adjunction unit $\M\rightarrow (\M_{\dagger})^{\dagger}$ are supported
on $\HC_{\partial\Orb}(\U_{\h^1})$.
\end{itemize}

\subsection{Results on finite dimensional irreducible $\Walg$-modules}\label{SS_res_functor}
Let us state our results on the classification of finite dimensional irreducible $\Walg$-modules.
For this, we will need to recall one of the main results of \cite{HC}.  Since $Q$ acts on $\Walg$
by automorphisms, it also acts on the set
$\operatorname{Irr}_{fin}(\Walg)$ of the isomorphism classes of finite dimensional irreducible $\Walg$-modules.
Since the action of $Q$ on $\Walg$ is Hamiltonian, the action on $\operatorname{Irr}_{fin}(\Walg)$
descends to an action of component group $A(=A_{\Orb}):=Q/Q^\circ$.

One of the main results of \cite{HC}, see Section 1.2 there,
was a natural identification
$\operatorname{Irr}_{fin}(\Walg)/A\xrightarrow{\sim}\Prim_{\Orb}(\U)$: it sends $\J\in \Prim_\Orb(\U)$
to the $A$-orbit of the irreducible representations of $\Walg/\J_{\dagger}$, this is well-defined
and gives a bijection by (6) of Section \ref{SS_HC_restr}. So to finish the classification
of the finite dimensional irreducible $\Walg$-modules we need, for every primitive ideal $\J\in \operatorname{Prim}_\Orb(\U)$, to compute the stabilizer $H_\J$ (defined up to conjugacy) in
the $A$-orbit over $\J$.

Fix a central character $\lambda$ and assume for time being that it is regular.
Let us write $\Walg_{\lambda,\dcell}$ for the semisimple finite dimensional quotient
of $\Walg_\lambda$ whose simple representations are precisely the irreducibles
lying over the primitive ideals corresponding to the two-sided cell $\dcell$.
Recall the Springer representation $\Spr_{\Orb}:=H_{top}(\mathcal{B}_e,\C)$ of $W\times A$.
Also recall that $W_{[\lambda]}$ can be regarded as a subgroup of $W$ via $W_{[\lambda]}\hookrightarrow
W^a\twoheadrightarrow W$.
Let us write $\Spr_{\Orb,\dcell}$ for the sum of all irreducibles $W_{[\lambda]}$-submodules
in the Springer representation that belong to the family of irreducible $W$-modules indexed by the
cell $\dcell$.

\begin{Thm}\label{Thm:Walg_classif_regular}
Let $\dcell$ be a two-sided cell in $W_{[\lambda]}$ and let $\Orb=\Orb_{\dcell}$. Then the following is true.
\begin{enumerate}
\item Let $\J\in \Prim_{\Orb}(\U_\lambda)$ correspond to a left cell $\sigma\subset W_{[\lambda]}$
and let $H_\J$ denote a stabilizer in the $A$-orbit in $\Irr_{fin}(\Walg_\lambda)$ lying over $\J$.
Then the $A$-module $\Hom_{W_{[\lambda]}}([\sigma], \Spr_{\Orb})$ coincides with
the $A$-module induced from the trivial $H_{\J}$-module.
\item We have an isomorphism $K_0(\Walg_{\lambda,\dcell}\operatorname{-mod})\cong \Spr_{\Orb,\dcell}$
of $W_{[\lambda]}\times A$-modules.
\end{enumerate}
\end{Thm}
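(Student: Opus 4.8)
The plan is to transport the problem from $\Walg_\lambda$-modules to $\U_\lambda$-bimodules via the restriction functor $\bullet_\dagger$ of Section \ref{SS_HC_restr}, and then to identify everything in terms of the Hecke-algebraic data already assembled in Section \ref{SS_Hecke_HC}. First I would recall the identification $\Irr_{fin}(\Walg_\lambda)/A \xrightarrow{\sim} \Prim_\Orb(\U_\lambda)$, which reduces part (1) to computing, for the primitive ideal $\J$ attached to a left cell $\sigma$, the stabilizer $H_\J$ of a point in the $A$-orbit over $\J$. The key observation is that the fiber of $\Irr_{fin}(\Walg_\lambda) \to \Prim_\Orb(\U_\lambda)$ over $\J$ is exactly $A/H_\J$, so the $A$-module $\C[\Irr_{fin}(\Walg_{\lambda,\dcell})]$ decomposes over the left cells $\sigma$ with $\Orb_\dcell = \Orb$ as $\bigoplus_\sigma \operatorname{Ind}_{H_{\J_\sigma}}^A \mathbf{1}$. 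Thus part (2) follows from part (1) together with the decomposition of the Springer representation $\Spr_{\Orb,\dcell}$ into left-cell pieces: $\Spr_{\Orb,\dcell} = \bigoplus_\sigma \Hom_{W_{[\lambda]}}([\sigma],\Spr_\Orb) \otimes [\sigma]$ as a $W_{[\lambda]}\times A$-module, where the multiplicity spaces are precisely the $A$-modules computed in (1). So the whole theorem hinges on (1), i.e.\ on identifying $\Hom_{W_{[\lambda]}}([\sigma],\Spr_\Orb)$ as $\operatorname{Ind}_{H_\J}^A\mathbf{1}$.

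For part (1) the strategy is to compare two descriptions of the same vector space. On one side, the $K_0$ of finite-dimensional $\Walg_\lambda$-modules annihilated by (the restriction of) $\J$ is, by (6) of Section \ref{SS_HC_restr} applied to $\J_\dagger$, the $K_0$ of the semisimple finite-dimensional algebra $\Walg_\lambda/\J_\dagger$, which as an $A$-set of simples is $A/H_\J$; hence as an $A$-module it is $\operatorname{Ind}_{H_\J}^A\mathbf{1}$. On the other side, I would use the exact tensor functor $\bullet_\dagger\colon \HC_{\overline\Orb}(\U_\lambda)\to \operatorname{Bimod}^Q_{fin}(\Walg_\lambda)$ to transport the cell module structure: by Section \ref{SS_Hecke_HC}, $K_0(\HC^{ss}_\dcell(\U_\lambda)) = \Jalg_\dcell(W_{[\lambda]})$, and the left cell $\sigma$ with Duflo involution $d$ contributes the left cell module $\Jalg(W_{[\lambda]})t_d = [\sigma]$. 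Applying $\bullet_\dagger$ and using exactness plus Corollary \ref{Cor:tens_prod_filtration} (the semisimplification of tensor products) shows that $\bullet_\dagger$ sends the simple bimodules $\M_w$, $w\in\sigma$, to the simple objects of $\operatorname{Bimod}^Q_{fin}(\Walg_\lambda)$ over $\J_\sigma$, and intertwines the $K_0$-level module structure. The representation-theoretic meaning of the Springer representation enters here: by the work of Joseph and Barbasch--Vogan, for integral $\lambda$ the left cell module $[\sigma]$, as a $W\times A$-module, is $\Hom$-dual to a piece of $\Spr_\Orb$; more precisely $\Hom_W([\sigma],\Spr_\Orb)$ is the space of irreducible $\Walg$-modules over $\J_\sigma$ up to $A$-action, which is $\operatorname{Ind}_{H_\J}^A\mathbf{1}$. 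The non-integral case is reduced to the integral case by replacing $W$ with $W_{[\lambda]}$ throughout, which is legitimate because $\bullet_\dagger$ is $\C[\h^*]^W$-bilinear and the restriction to a fixed regular central character only sees $W_{[\lambda]}$.

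I expect the main obstacle to be making precise the statement that $\bullet_\dagger$ matches the left-cell module $[\sigma]\subset K_0(\HC_\dcell(\U_\lambda))$ with the $A$-permutation module $\C[A/H_\J]$ \emph{as $A$-modules} (and $W_{[\lambda]}$-equivariantly), rather than merely as vector spaces. The functor $\bullet_\dagger$ is $Q$-equivariant and exact and tensor, so it is formal that it carries the $\Jalg_\dcell$-action to the corresponding action on finite-dimensional $\Walg_\lambda$-bimodules; the subtle point is that the resulting $A$-action on the simples over $\J$ is the geometric one coming from the $Q$-action on $\Walg$, and that this is the same $A$-action that appears in the Springer correspondence. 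To handle this I would invoke the compatibility (from \cite{HC}, \cite{BFO}) between $\bullet_\dagger$, the $W\times A$-action on $\Spr_\Orb$, and Joseph's realization $K_0(\HC^{ss}_\dcell(\U_\lambda)) = \Jalg_\dcell$, and then quote the Bezrukavnikov--Finkelberg--Ostrik identification of the fusion category $\HC^{ss}_\dcell$ with a category of equivariant sheaves on a finite $A$-set, under which the functor $\bullet_\dagger$ becomes the fiber functor at a point with stabilizer $H_\J$. Once this dictionary is in place, both parts of the theorem are a direct comparison of multiplicities. A secondary technical point is to make sure that the reductions between singular and regular $\lambda$, and between integral and non-integral $\lambda$ (as in Section \ref{SS_HC_prim}), are compatible with $\bullet_\dagger$ and with the cell structures; this is where the relative versions (2$'$)--(5$'$) of Section \ref{SS_HC_restr} are used.
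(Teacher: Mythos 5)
There is a genuine gap, and it sits exactly at the point you flag as "the main obstacle." Your entire argument funnels into the claim that $\Hom_{W_{[\lambda]}}([\sigma],\Spr_{\Orb})$ computes the set of finite-dimensional irreducibles over $\J_\sigma$, and you propose to obtain this by quoting Joseph/Barbasch--Vogan together with the BFO picture of $\HC^{ss}_{\dcell}$ as equivariant sheaves on a finite $A$-set. But that identification (including the count of irreducibles and the matching of the $A$-actions) \emph{is} the theorem: for integral $\lambda$ it is the main result of \cite{LO}, and for non-integral rational $\lambda$ it is new and cannot be obtained by "replacing $W$ with $W_{[\lambda]}$ throughout" --- the families and left cells of the proper parabolic $W_{[\lambda]}\subset W^a$, the primitive ideals with non-integral central character, and their relation to $\Spr_\Orb$ are not formal consequences of the integral case. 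The paper's section is explicitly an application of Theorem \ref{Thm:main}, and your plan never uses it. The actual mechanism is: Proposition \ref{Prop:simples_from_char_0} (an Amitsur--Levitsky argument) shows that reductions mod $p$ of finite-dimensional irreducible $\Walg_\lambda$-modules are exactly the modular simples whose dimension polynomial has degree $\dim\Orb/2$; Theorem \ref{Thm:main}(1),(2) then identifies their span in $K_0$ with the $\dcell$-isotypic part of $H_*(\mathcal{B}_e,\C)$ as a $W_{[\lambda]}$-module; and Dodd's injectivity of the projection to $H_{top}(\mathcal{B}_e,\C)$ converts this into $\Spr_{\Orb,\dcell}$. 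That chain is what supplies the count you are missing. A further (rational vs.\ irrational $\lambda$) reduction via a deformation argument with the relative restriction functor is also needed and absent from your plan.

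Two secondary problems. First, your logical order is inverted relative to what can actually be made to work: part (1) does not imply part (2), because (1) only determines the multiplicity spaces $\Hom_{W_{[\lambda]}}([\sigma],\Spr_\Orb)$ as $A$-modules, while (2) asserts an isomorphism of $W_{[\lambda]}\times A$-modules; a priori $K_0(\Walg_{\lambda,\dcell}\operatorname{-mod})$ carries only a $\Jalg_{\dcell}$-action via $\bullet_\dagger$, and upgrading this to the Springer $W_{[\lambda]}$-action is part of the content of (2). Your displayed decomposition $\Spr_{\Orb,\dcell}=\bigoplus_\sigma \Hom_{W_{[\lambda]}}([\sigma],\Spr_\Orb)\otimes[\sigma]$ is also not an isotypic decomposition, since left cell modules are in general reducible. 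The paper proves (2) first and then extracts (1) by applying the idempotent $t_d$ to $K_0(\Walg_{\lambda,\dcell}\operatorname{-mod})\cong\Spr_{\Orb,\dcell}$, using Dodd's comparison of the $\Jalg_\dcell$-actions; on one side $t_d$ cuts out $\Hom_{W_{[\lambda]}}([\sigma],\Spr_\Orb)$, on the other it cuts out the span of the simples over $\J$, which is visibly $\operatorname{Ind}_{H_\J}^A\mathbf{1}$. That direction is the one you should pursue.
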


When $\lambda$ is integral, this theorem is the main result of \cite{LO}, see Theorem 1.1 and (iii)
of Theorem 7.4 there. Note that (1) is sufficient
to determine $H_\J$ (at least in all cases but 2). Indeed, the group $A$ is abelian for all nilpotent
orbits but twelve in the exceptional Lie algebras, see, e.g., \cite[Section 8.4]{CM}.
If $A$ is abelian, then $H_{\J}$ is just
the kernel of the $A$-action on   $\Hom_{W_{[\lambda]}}([\sigma], \Spr_{\Orb})$. Out of these twelve
cases, in ten cases we have $A=S_3$, where, clearly, the induced module determines a subgroup uniquely.
In the two remaining cases we have $A=S_4$ (in $F_4$) and $A=S_5$ (in $E_8$), we haven't checked
for general $\lambda$ if (1) determines $H_{\J}$ uniquely (though for an integral $\lambda$ this is indeed the case).

To finish this section let us explain what happens for singular central characters. The situation is very similar
to the integral case considered in \cite{LO}.
Let $\lambda_0$ be a singular  dominant element in $\h^*$.  Pick a dominant element
$\mu$ in the root lattice so that $\lambda:=\lambda_0+\mu$ is strictly dominant.
As was explained in Section \ref{SS_HC_prim},
$\Prim_{\Orb}(\U_{\lambda_0})\hookrightarrow \Prim_{\Orb}(\U_\lambda)$. This gives rise to
the partitions $\Prim_{\Orb}(\U_{\lambda_0})=\bigsqcup_{\dcell}\Prim_{\dcell}(\U_{\lambda_0}),
\Irr_{fin}(\Walg_{\lambda_0})=\bigsqcup_{\dcell}\Irr(\Walg_{\lambda_0,\dcell})$.

\begin{Cor}\label{Cor:singular}
Let $\J_0\subset \Prim_{\dcell}(\U_{\lambda_0})$, let $\J$ be the corresponding
ideal in $\Prim_{\dcell}(\U_\lambda)$. Then the $A$-orbit over $\J_0$ coincides
with $A/H_\J$ and $K_0(\Walg_{\lambda_0,\dcell}\operatorname{-mod})=\Spr_{\Orb,\dcell}^{W_{\lambda_0}}$.
\end{Cor}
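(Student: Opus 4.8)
The plan is to deduce both assertions from the regular case, namely from Theorem \ref{Thm:Walg_classif_regular}, by transporting it along the Jantzen embedding $\Prim_\Orb(\U_{\lambda_0})\hookrightarrow\Prim_\Orb(\U_\lambda)$ via translation functors and the relative restriction functor $\bullet_\dagger$ of Section \ref{SS_HC_restr}; this is a transcription of the treatment of integral singular central characters in \cite[Section 7]{LO}. First I would set things up on the $\Walg$-side. Take $\h^1\subset\h^*$ to be the affine line through $\lambda$ and $\lambda_0$. The functor $\bullet_\dagger\colon\HC(\U_{\h^1})\to\HC^Q(\Walg_{\h^1})$ is $\C[\h^1]$-bilinear, exact and tensor, kills $\HC_{\partial\Orb}(\U_{\h^1})$ and carries $\HC_{\overline\Orb}(\U_{\h^1})$ into $\HC^Q_\chi(\Walg_{\h^1})$; specializing at $\lambda$ and at $\lambda_0$ therefore intertwines with the corresponding specializations on the $\Walg$-side. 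Consequently the translation functors $T_{\lambda\to\lambda_0},T_{\lambda_0\to\lambda}$ (on $\U$-modules and on $\HC(\U)$) induce biadjoint functors $\mathsf{T}_{\lambda\to\lambda_0},\mathsf{T}_{\lambda_0\to\lambda}$ on $\Walg$-modules and on $\HC^Q(\Walg)$, with $\bullet_\dagger$ intertwining them; and since $\bullet_\dagger$ annihilates $\HC_{\partial\Orb}$, a simple $\U_{\lambda_0}$-module over $\J_0$ is carried by $\mathsf{T}_{\lambda_0\to\lambda}$ to a simple $\U_\lambda$-module over $\J$ and conversely, matching the bijection of \cite[Sections 5.4--5.8]{Jantzen}.

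For the first assertion, recall from Section \ref{SS_res_functor} that $\Walg_\lambda/\J_\dagger$ is a semisimple algebra whose matrix blocks are permuted transitively by $A$ with stabilizer $H_\J$, and similarly $\Walg_{\lambda_0}/(\J_0)_\dagger$ with stabilizer $H_{\J_0}$. By the Jantzen embedding the bimodule $\U_{\lambda_0}/\J_0$ is obtained from $\U_\lambda/\J$ by translation to the $\lambda_0$-wall modulo subquotients supported on $\partial\Orb$; applying the exact tensor functor $\bullet_\dagger$, which erases this discrepancy, identifies $\Walg_{\lambda_0}/(\J_0)_\dagger$ with the $\mathsf{T}$-translate of $\Walg_\lambda/\J_\dagger$ as an $A$-equivariant algebra. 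Hence $H_{\J_0}$ is conjugate to $H_\J$, i.e.\ the $A$-orbit over $\J_0$ is $A/H_\J$.

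For the second assertion, Theorem \ref{Thm:Walg_classif_regular}(2) gives $K_0(\Walg_{\lambda,\dcell}\operatorname{-mod})\cong\Spr_{\Orb,\dcell}$ as $W_{[\lambda]}\times A$-modules, the $W_{[\lambda]}$-action being the one induced by the wall-crossing functors on $\Walg_\lambda\operatorname{-mod}$. As noted in Section \ref{SS_HC_prim}, $W_{\lambda_0}$ is the standard parabolic subgroup of $W_{[\lambda]}$ generated by the reflections in the walls through $\lambda_0$. The composite $\mathsf{T}_{\lambda_0\to\lambda}\mathsf{T}_{\lambda\to\lambda_0}$ acts on $K_0(\Walg_{\lambda,\dcell}\operatorname{-mod})$ as a nonzero multiple of the averaging projector onto $W_{\lambda_0}$-invariants, while $\mathsf{T}_{\lambda\to\lambda_0}$ identifies $K_0(\Walg_{\lambda_0,\dcell}\operatorname{-mod})$ with the image of this projector; this is proved exactly as in \cite[Section 7]{LO} for integral central characters, using the biadjointness of $\mathsf{T}_{\lambda\to\lambda_0},\mathsf{T}_{\lambda_0\to\lambda}$ together with the first assertion and the description of $\Prim_\Orb(\U_{\lambda_0})$ as the ideals $\J(w\lambda)$ with $w$ longest in $wW_{\lambda_0}$. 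This yields $K_0(\Walg_{\lambda_0,\dcell}\operatorname{-mod})\cong\Spr_{\Orb,\dcell}^{W_{\lambda_0}}$ as $A$-modules.

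The step I expect to be the main obstacle is verifying that the translation functors genuinely descend through $\bullet_\dagger$ and act on the distinguished quotients $\U_\lambda/\J$ as claimed --- in particular, that $T_{\lambda\to\lambda_0}(\U_\lambda/\J)$ surjects onto $\U_{\lambda_0}/\J_0$ with kernel and cokernel having associated variety inside $\partial\Orb$, so that applying $\bullet_\dagger$ gives a clean isomorphism of $A$-equivariant algebras. Granting this, the remainder is a direct transcription of the integral argument of \cite{LO} once the relative restriction functor of Section \ref{SS_HC_restr} is available.
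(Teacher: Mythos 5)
Your proposal follows essentially the same route as the paper: the paper also defines translation functors on $\Walg$-modules via the tensor functor $\bullet_\dagger$ (as tensoring with $(V\otimes\U)_\dagger$), uses that $\mathcal{T}_{\lambda\to\lambda_0}\circ\mathcal{T}_{\lambda_0\to\lambda}$ is multiplication by $|W_{\lambda_0}|$ while $\mathcal{T}_{\lambda_0\to\lambda}\circ\mathcal{T}_{\lambda\to\lambda_0}$ acts as $\sum_{w\in W_{\lambda_0}}w$ on $K_0$, and deduces both $K_0(\Walg_{\lambda_0}\operatorname{-mod}_{fin})=K_0(\Walg_{\lambda}\operatorname{-mod}_{fin})^{W_{\lambda_0}}$ and $H_{\J_0}=H_\J$ from the resulting $A$-equivariant identification of the quotients by $\J(w\lambda_0)_\dagger$ and $\J(w\lambda)_\dagger$. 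The step you flag as the main obstacle is exactly the point the paper also relies on (and treats tersely), so your argument is a faithful match.
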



\subsection{Reduction of representations mod $p$}
Now fix a dominant  rational $\lambda\in \h^*$.
Recall, \cite[Theorem 1.3]{B_ineq}, that $\Walg_\lambda$ has a minimal ideal of finite codimension, say $\I$.
By definition, this ideal is defined over $\Q$. For a finite localization $R$ of $\Z$, set $\I_R:=\Walg_{\lambda,R}\cap \I$. We assume that $\gr\Walg_R=R[S]$ and $\gr\Walg_{\lambda,R}=R[S\cap \mathcal{N}]$,
this can be achieved after a finite localization of $R$.

\begin{Lem}\label{Lem:ideal_square}
After a finite localization of $R$, we get $\I_R^2=\I_R$.
\end{Lem}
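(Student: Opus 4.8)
The plan is to argue that $\I_R$ is idempotent after inverting finitely many primes, by reducing to a statement about the reduction mod $p$ and then to a statement about the behaviour of associated varieties. First I would recall that over $\C$ the quotient $\Walg_\lambda/\I$ is a (semisimple, in fact) finite-dimensional algebra, being a minimal ideal of finite codimension; hence $\Walg_\lambda/\I^2$ is also finite-dimensional, and $\I/\I^2$ is a finitely generated $\Walg_\lambda$-bimodule of finite length. Over $\C$ we have $\I^2=\I$: indeed $\I$ is a minimal ideal of finite codimension, and $\I^2$ is an ideal with $\Walg_\lambda/\I^2$ finite-dimensional, so $\I^2\supseteq\I$ by minimality (more precisely: $\I^2$ is contained in $\I$ with finite-dimensional quotient $\I/\I^2$, and $\I/\I^2$ is a $\Walg_\lambda$-bimodule on which $\I$ acts trivially on both sides; if $\I/\I^2\ne 0$ it would produce a two-sided ideal strictly between $\I^2$ and $\I$, and chasing minimality of $\I$ among finite-codimension ideals forces $\I/\I^2=0$). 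The content of the lemma is therefore that this identity survives reduction mod $p$ for $p\gg 0$.

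The key step is that $\I_R$ is an $R$-form of $\I$ in the appropriate sense: we have $\I_R=\Walg_{\lambda,R}\cap\I$, and after a finite localization of $R$ the quotient $\Walg_{\lambda,R}/\I_R$ becomes a free $R$-module of finite rank (equal to $\dim_\C\Walg_\lambda/\I$), since $\Walg_\lambda/\I$ is finite-dimensional over $\C$ and finitely generated modules over a finite localization of $\Z$ become free after further localization. Likewise $\I_R$ is a finitely generated $\Walg_{\lambda,R}$-bimodule, so after further localization $\I_R$ and $\I_R/\I_R^2$ are free over $R$. Then $\I_R^2\subseteq\I_R$ with $\I_R/\I_R^2$ a free $R$-module whose rank equals $\dim_\C\I/\I^2=0$; hence $\I_R/\I_R^2=0$, i.e. $\I_R^2=\I_R$. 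The one thing to be careful about is that formation of the product ideal $\I_R^2$ commutes with the base change from $R$ to $\C$ after a finite localization — this holds because $\I_R$ is generated by finitely many elements and the product $\I_R\cdot\I_R$ is the $R$-span of the products of generators, which base-changes correctly once $R$ is large enough for everything in sight to be $R$-flat.

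The main obstacle I anticipate is purely bookkeeping: ensuring that all the relevant modules ($\Walg_{\lambda,R}$ itself via $\gr\Walg_{\lambda,R}=R[S\cap\mathcal N]$, the ideal $\I_R$, the quotients $\Walg_{\lambda,R}/\I_R$ and $\I_R/\I_R^2$, and the multiplication map) are simultaneously flat — equivalently free — over $R$ after one finite localization, so that dimension counts over $\C$ transfer verbatim. This is the same style of argument used repeatedly elsewhere in the paper (e.g. the conditions (c1)--(c6) and the treatment of $\J_R$), so I would invoke those standard flatness-after-localization facts rather than reprove them, and the lemma follows.
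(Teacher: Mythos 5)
Your proposal is correct and follows essentially the same route as the paper's proof: establish that $\I_R/\I_R^2$ is finitely generated over $R$ (via Noetherianity of $\Walg_{\lambda,R}$ coming from $\gr\Walg_{\lambda,R}=R[S\cap\mathcal N]$ and the finite codimension of $\I_R$), observe that in characteristic zero $\I$ is idempotent by minimality among finite-codimension ideals, and conclude that $\I_R/\I_R^2$ is $R$-torsion, hence killed by a further finite localization. The only cosmetic difference is that the paper phrases the characteristic-zero input over $\Q$ (as $\I_\Q=\I_\Q^2$) rather than over $\C$, which is equivalent since $\I$ is defined over $\Q$.
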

\begin{proof}
Note that $\gr \Walg_{\lambda,R}/\I_R$ is a finitely generated commutative $R$-algebra. So after a
finite localization of $R$ we can achieve that $\Walg_{\lambda,R}/\I_R$ is a free finite rank $R$-module.
Note that $\Walg_{\lambda,R}$ is Noetherian because of $\gr\Walg_{\lambda,R}=R[S\cap \mathcal{N}]$.
In particular,  $\I_R$ is a finitely generated left $\Walg_{\lambda,R}$-module.
It follows that $\I_R/\I_R^2$ is a finitely generated module over $\Walg_{\lambda,R}/\I_R$ and hence
a finite rank $R$-module. Note that $\I_{\mathbb{Q}}$ is still the minimal ideal of finite
codimension in $\Walg_{\mathbb{Q}}$. So $\I_{\mathbb{Q}}=\I_{\mathbb{Q}}^2$. It follows
that $\I_R/\I_{R}^2$ is a finitely generated torsion $R$-module hence it is killed by
a finite localization of $R$.
\end{proof}

This lemma shows that $\left(\F\otimes_R (\Walg_{\lambda,R}/\I_{R})\right)\operatorname{-mod}$
is a Serre subcategory in $\Walg_{\lambda,\F}\operatorname{-mod}$.

After replacing $R$ with a finitely generated algebraic extension, we can assume that
$\Walg_{\lambda,\operatorname{Frac}(R)}/\I_{\operatorname{Frac}(R)}$ is split. So
there is a natural bijection  $\operatorname{Irr}_{fin}(\Walg_\lambda)\cong
\operatorname{Irr}(\Walg_{\lambda,\operatorname{Frac}(R)}/\I_{\operatorname{Frac}(R)})$.
So, for $L\in \operatorname{Irr}(\Walg_\lambda)$, we can talk about its reduction $L_\F$ mod $p$.
For standard reasons, $L_{\F}$ is irreducible.   As was checked in \cite[Section 6.5]{BL}, $L_{\F}$ has central
character $\chi$. So we get an inclusion $\operatorname{Irr}_{fin}(\Walg_\lambda)\hookrightarrow
\operatorname{Irr}(\Walg^\chi_{\lambda,\F})$. Recall, Section \ref{SS_W_background},
that the target is naturally identified with $\operatorname{Irr}(\U_{\lambda,\F}^\chi)$.

\begin{Prop}\label{Prop:simples_from_char_0}
For $p\gg 0$, the image of $\operatorname{Irr}_{fin}(\Walg_\lambda)$
in $\operatorname{Irr}(\U_{\lambda,\F}^\chi)$ consists of the simples with degree
of dimension polynomial equal to $\dim \Orb/2$.
\end{Prop}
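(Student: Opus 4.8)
## Proof proposal for Proposition \ref{Prop:simples_from_char_0}

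The plan is to prove two inclusions. First I would show that every simple module in the image of $\operatorname{Irr}_{fin}(\Walg_\lambda)$ has dimension polynomial of degree exactly $\dim\Orb/2$, and then I would use a counting argument — comparing the number of simples of each type on both sides — to conclude that these are \emph{all} such simples.

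For the first (and harder) direction, fix $L\in \operatorname{Irr}_{fin}(\Walg_\lambda)$ and let $L_\F$ be its reduction mod $p$, viewed via the Premet isomorphism $\U^\chi_{\lambda,\F}\cong \operatorname{End}_\F(\mathfrak{m}_\F)\otimes \Walg^\chi_{\lambda,\F}$ as a simple $\U^\chi_{\lambda,\F}$-module, so that $\dim L_\F = \dim\mathfrak{m}_\F\cdot\dim(\text{underlying }\Walg\text{-module})$, and $\dim\mathfrak{m}_\F = p^{\dim\Orb/2}$. The key point is to identify, for $L_\F\in \U^\chi_{\lambda,\F}\operatorname{-mod}_{\leqslant\dcell}$, which cell $\dcell$ occurs. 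Here I would use the restriction functor $\bullet_{\dagger}:\HC(\U)\to\HC^Q(\Walg)$ and its relative versions (2$'$)--(5$'$) from Section \ref{SS_HC_restr}: the point is that a finite-dimensional irreducible $\Walg_\lambda$-module $L$ lies over a primitive ideal $\J\in\Prim_\Orb(\U_\lambda)$ with $\VA(\U_\lambda/\J)=\overline{\Orb}$, so by the cell-theoretic description of primitive ideals in Section \ref{SS_Hecke_HC}, $\J=\J_d$ for a distinguished involution $d$ in some two-sided cell $\dcell$ with $\Orb_\dcell=\Orb$. Reducing mod $p$, $L_\F$ is annihilated by $\J_{d,R}$, and by Lemma \ref{Lem:prim_annihilator} the maximal primitive ideal annihilating $L_\F$ has its distinguished involution in a two-sided cell whose orbit is contained in $\overline\Orb$; combined with the fact that $\J_{d,R}L_\F=0$ forces that cell to be $\dcell$ itself (one cannot have a strictly larger orbit annihilated by the reduction of an ideal whose associated variety is exactly $\overline\Orb$). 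Then part (2) of Theorem \ref{Thm:main} gives $\deg(\dim V_{\lambda,p}(L_\F)) = \dim\Orb_\dcell/2 = \dim\Orb/2$.

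For the reverse inclusion, I would count. The simples in $\U^\chi_{\lambda,\F}\operatorname{-mod}$ with dimension polynomial of degree $\dim\Orb/2$ are, by part (2) of Theorem \ref{Thm:main}, exactly those lying in $\U^\chi_{\lambda,\F}\operatorname{-mod}_{\leqslant\dcell}$ but not lower, for cells $\dcell$ with $\Orb_\dcell=\Orb$. By part (1) of Theorem \ref{Thm:main} together with Proposition \ref{Prop:categor_filtration}, the number of such simples equals the sum over those cells $\dcell$ of the dimension of $K_0(\operatorname{Coh}(\mathcal{B}_e))_{\leqslant\dcell}/K_0(\operatorname{Coh}(\mathcal{B}_e))_{<\dcell}$, which via Lemma \ref{Lem:action_compat} and the Springer-theoretic identification is $\dim\Spr_{\Orb,\dcell}$ (recall $\mathcal{B}_e$ here forces $\Orb_\dcell = Ge$). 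On the other hand, Theorem \ref{Thm:Walg_classif_regular}(2) identifies $K_0(\Walg_{\lambda,\dcell}\operatorname{-mod})$ with $\Spr_{\Orb,\dcell}$, so $\#\operatorname{Irr}_{fin}(\Walg_\lambda) = \sum_{\dcell:\,\Orb_\dcell=\Orb}\dim\Spr_{\Orb,\dcell}$ as well. Since reduction mod $p$ is injective on $\operatorname{Irr}_{fin}(\Walg_\lambda)$ (established just before the proposition) and lands among the degree-$\dim\Orb/2$ simples by the first part, equality of cardinalities forces the image to be all of them.

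The main obstacle I anticipate is the identification of the cell $\dcell$ attached to $L_\F$ in the first direction — specifically, pinning down that the reduction mod $p$ of the primitive ideal $\J_d$ (with $\VA(\U_\lambda/\J_d)=\overline\Orb$) really does put $L_\F$ in the filtration piece indexed by $\dcell$ and not in a smaller one. This requires knowing that $\J_{d,R}$ does not acquire "extra" annihilation after reduction, i.e. that the associated variety is preserved under reduction mod $p$ for $p\gg 0$; this should follow from the $R$-flatness of $\U_{\lambda,R}/\J_{d,R}$ and the compatibility of good filtrations with base change, arranged as in the conditions (c1)--(c6), but it is the step where one must be careful. An alternative route that sidesteps some of this is to invoke Remark \ref{upper_bound_alt} directly, bounding $\dim L_\F$ above and below in terms of characteristic-zero Gelfand--Kirillov dimension data, but the cell bookkeeping via Theorem \ref{Thm:Walg_classif_regular} seems cleaner for the counting half of the argument.
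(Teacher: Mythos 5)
Your reverse inclusion is circular. You invoke Theorem \ref{Thm:Walg_classif_regular}(2) to count $\#\operatorname{Irr}_{fin}(\Walg_\lambda)$, but in the paper that theorem is \emph{deduced from} Proposition \ref{Prop:simples_from_char_0}: the very first step of its proof reads ``From Proposition \ref{Prop:simples_from_char_0} combined with (2) of Theorem \ref{Thm:main} we know that $K_0(\Walg_{\lambda,\dcell}\operatorname{-mod})=H_*(\mathcal{B}_e,\C)_{\dcell}$.'' Without the classification theorem you have no independent count of $\operatorname{Irr}_{fin}(\Walg_\lambda)$, so the cardinality comparison cannot get off the ground. The missing idea for the hard direction is entirely different and does not go through counting: via Premet's isomorphism $\U_\F^\chi\cong \operatorname{End}_\F(\mathfrak{m}_\F)\otimes \Walg_\F^\chi$, a $\U^\chi_{\lambda,\F}$-simple has dimension polynomial of degree $\dim\Orb/2$ iff the corresponding $\Walg^\chi_{\lambda,\F}$-simple has dimension bounded by some $d$ independent of $p$. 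One then shows any such simple factors through $\Walg_{\lambda,\F}/\I_\F$, where $\I$ is the minimal ideal of finite codimension in $\Walg_\lambda$: the ideal $\I'_R$ generated by the standard polynomial identities $\sum_{\sigma\in S_{2d}}\operatorname{sgn}(\sigma)a_{\sigma(1)}\cdots a_{\sigma(2d)}$ kills every representation of dimension $\leqslant d$ by Amitsur--Levitzki, has finite codimension over $\C$ (hence contains $\I$), and the containment $\I'_R\supset\I_R$ persists after a finite localization of $R$. Since $\Walg_{\lambda,\operatorname{Frac}(R)}/\I_{\operatorname{Frac}(R)}$ is split, every simple of $\Walg_{\lambda,\F}/\I_\F$ is the reduction of a characteristic-zero finite-dimensional simple, which is exactly the surjectivity you need.

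Your forward inclusion, while not wrong, is vastly over-engineered and rests on the delicate point you yourself flag (behaviour of annihilators under reduction mod $p$), which is unnecessary here: for $L\in\operatorname{Irr}_{fin}(\Walg_\lambda)$ the reduction $L_\F$ has $\dim_\F L_\F=\dim_\C L$ (a constant), so the corresponding $\U^\chi_{\lambda,\F}$-simple has dimension $p^{\dim\Orb/2}\cdot\dim_\C L$, visibly a polynomial of degree $\dim\Orb/2$ in $p$. No appeal to Theorem \ref{Thm:main}(2) or to cell bookkeeping is needed for this direction; the entire proposition is really a statement about $\Walg$ and the ideal $\I$ (whose key property $\I_R^2=\I_R$ was secured in Lemma \ref{Lem:ideal_square}), not about the cell filtration.
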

\begin{proof}
The simples in $\operatorname{Irr}(\U_{\lambda,\F}^\chi)$ with degree of dimension polynomial
equal to $\dim \Orb/2$ correspond to the simples in $\operatorname{Irr}(\Walg^\chi_{\lambda,\F})$
whose dimension is independent of $p$. Let $d$ be the maximal dimension of these representations.
Let $\I'_R$ be the ideal in $\Walg_R$ generated by the elements $\sum_{\sigma\in S_{2d}}\operatorname{sgn}(\sigma)a_{\sigma(1)}\ldots a_{\sigma(2d)}$ for $a_i\in \Walg_R, i=1,\ldots,2d$.
By the Amitsur-Levitsky theorem, the ideal $\I'_R$ vanishes on all representations of $\Walg_\F$ of
dimension $\leqslant d$. Arguing as in \cite[Lemma 5.1]{rouq_der}, we see that $\I'_{\C}$ is of finite
codimension. So $\I'_\C\supset \I$. It follows that after a finite localization of $R$, we have
$\I'_R\supset \I_R$. So any irreducible representation of $\Walg_{\lambda,\F}$ of dimension $\leqslant d$
factors through $\Walg_{\lambda,\F}/\I_{\F}$ for $p$ large enough. This finishes the proof.
\end{proof}

\subsection{Proof of Theorem \ref{Thm:Walg_classif_regular}}
Let us prove Theorem \ref{Thm:Walg_classif_regular} in the case when $\lambda$ is rational.

\begin{proof}[Proof of Theorem \ref{Thm:Walg_classif_regular} for rational $\lambda$]
Let us start by proving (2). From Proposition \ref{Prop:simples_from_char_0} combined
with (2) of Theorem \ref{Thm:main} we know that $K_0(\Walg_{\lambda,\dcell}\operatorname{-mod})=H_*(\mathcal{B}_e,\C)_{\dcell}$,
the sum of all irreducible $W_{[\lambda]}$-submodules in $H_*(\mathcal{B}_e,\C)_{\dcell}$ that belong to
$\dcell$. This is with respect to the standard embedding $W_{[\lambda]}\hookrightarrow W^a$.
What remains to show is that
\begin{equation}\label{eq:homol_isot_equal} H_*(\mathcal{B}_e,\C)_{\dcell}\xrightarrow{\sim} H_{top}(\mathcal{B}_e,\C)_{\dcell}
\end{equation}
where now the action on the right hand side is via $W_{[\lambda]}\hookrightarrow W^a\twoheadrightarrow W$
and the map is induced by the natural projection $H_*(\mathcal{B}_e,\C)\twoheadrightarrow H_{top}(\mathcal{B}_e,\C)$.
According to Dodd, \cite[Section 7]{Dodd}, $K_0(\Walg_{\lambda,\dcell}\operatorname{-mod})\subset H_*(\mathcal{B}_e,\C)$
projects injectively to $H_{top}(\mathcal{B}_e,\C)$. The projection $H_*(\mathcal{B}_e,\C)\rightarrow H_{top}(\mathcal{B}_e,\C)$ is $W^a$-equivariant, where on the target space $W^a$ acts via the projection
$W^a\twoheadrightarrow W$, and so intertwines the actions of $W_{[\lambda]}$.
This implies (\ref{eq:homol_isot_equal}) and finishes the proof of (2).

Let us now deduce (1) from (2). The  restriction functor $\bullet_{\dagger}:\HC_{\Orb}(\U_\lambda)\rightarrow \operatorname{Bimod}_{fin}^Q(\Walg_\lambda)$ recalled in Section \ref{SS_res_functor}
equips $K_0(\Walg_{\lambda,\dcell}\operatorname{-mod})$ with an action of $K_0(\HC^{ss}_{\dcell}(\U_\lambda))=\Jalg_{\dcell}(W_{[\lambda]})$.
By results of Dodd, \cite[Section 8]{Dodd}, this action is the same
as the $\Jalg_{\dcell}(W_{[\lambda]})$-action on $K_0(\U_{\lambda,\F}^\chi\operatorname{-mod})_{\dcell}$.

By the description of the $\Jalg_{\dcell}$-action in the previous paragraph, the span of classes of the
irreducible modules lying over $\J$ equals $t_d K_0(\Walg_{\lambda,\dcell}\operatorname{-mod})$.

So $t_d K_0(\Walg_{\lambda,\dcell}\operatorname{-mod})$  is nothing else
but $\Hom_{W_{[\lambda]}}([\sigma], \Spr_{\Orb})$. On the other hand, the span is $A$-stable and is
the $A$-representation induced from the trivial representation of $H_\J$. This finishes the proof of (1).
\end{proof}

Now let us reduce the proof of Theorem \ref{Thm:main} to the case when $\lambda$ is rational.
To state our main technical result we need some notation.

Pick a regular central character $\lambda\in \h^*$.  Let $W_0$ denote a minimal parabolic subgroup
of $W$ containing $W_{[\lambda]}$, we can conjugate $\lambda$ and  assume that $W_0$ is standard,
while $\lambda$ is still dominant.
We can write $\lambda$ as $\lambda_1+\lambda_2$, where $\lambda_1$ lies in $(\h^*)^{W_0}$ and
$\lambda_2$ lies in the orthogonal complement to $(\h^*)^{W_0}$. Note that $\lambda_2$ is rational.

\begin{Prop}\label{Prop:rat_reduction}
For all dominant regular rational $\lambda'\in \lambda_2+(\h^*)^{W_0}$
satisfying $W_{[\lambda']}=W_{[\lambda]}$ the following is true. Let $\sigma$
be a left cell in $W_{[\lambda]}$ and let $\J,\J'$ be primitive ideals in $\U$ with central characters
$\lambda,\lambda'$ corresponding  to the left cell $\sigma$. Then $H_{\J}=H_{\J'}$.
\end{Prop}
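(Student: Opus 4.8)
The plan is to reduce the statement to a deformation argument along the affine line $\lambda_2+(\h^*)^{W_0}$, using the relative version of the restriction functor recorded in Section~\ref{SS_HC_restr}. First I would set up the family: work over the affine subspace $\h^1:=\lambda_2+(\h^*)^{W_0}$, form $\U_{\h^1}$ and $\Walg_{\h^1}$, and observe that the hypothesis $W_{[\lambda']}=W_{[\lambda]}$ for all dominant regular rational $\lambda'\in\h^1$ (together with the fact that the integral Weyl group is locally constant on the relevant open locus) means that the partition into two-sided cells, and hence the combinatorial labelling of primitive ideals by left cells, is \emph{constant} over a Zariski-open subset $\h^{1,\circ}\subset\h^1$ containing $\lambda$ and all the $\lambda'$ in the statement. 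The point is that $\h^1$ was chosen so that the reflection hyperplanes of $W^a$ meeting $\h^1$ are exactly those through $\lambda_2$ plus the $W_0$-hyperplanes, so moving $\lambda_1$ inside $(\h^*)^{W_0}$ (staying regular and in the dominant chamber for $W$) does not change $W_{[\lambda]}$ — this is the geometric content of the proposition's hypothesis.

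Next I would use the family over $\h^1$ to produce a single ``universal'' object interpolating between $\J$ and $\J'$. Concretely, take the primitive ideal $\J_\sigma(\lambda)$; by \cite[Section 5.4--5.8]{Jantzen}-type translation (the same mechanism used in Section~\ref{SS_HC_prim} to compare singular and regular central characters) the primitive ideals attached to the left cell $\sigma$ at varying regular central characters in $\h^{1,\circ}$ fit into a flat family $\J_{\sigma,\h^{1,\circ}}\subset\U_{\h^{1,\circ}}$, and similarly the minimal finite-codimension ideal $\I_{\h^1}\subset\Walg_{\h^1}$ (as in Lemma~\ref{Lem:ideal_square}, after a suitable localization of the base) is flat over $\h^{1,\circ}$. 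Applying the relative restriction functor $\bullet_\dagger$ and its properties (2$'$)--(5$'$) to $\U_{\h^{1,\circ}}/\J_{\sigma,\h^{1,\circ}}$ gives a finitely generated $\C[\h^{1,\circ}]$-flat $Q$-equivariant $\Walg_{\h^{1,\circ}}$-bimodule whose fiber at $\lambda$ (resp.\ $\lambda'$) is $\Walg_\lambda/\J_{\dagger}$ (resp.\ $\Walg_{\lambda'}/\J'_{\dagger}$). The $A$-action on the set of irreducible summands, hence the stabilizer $H$, is then the monodromy/specialization of a \emph{local system of finite sets} over the connected variety $\h^{1,\circ}$, and is therefore constant up to conjugacy — giving $H_\J=H_{\J'}$.

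The main obstacle, I expect, is verifying that the relevant bimodules genuinely form a \emph{flat} family over $\h^{1,\circ}$ with the correct fibers, i.e.\ that $(\bullet_\dagger)$ commutes with base change along the closed points of $\h^{1,\circ}$ and that the number and the $A$-module structure of the simple summands of the fiber is upper/lower semicontinuous in the right way to force constancy. This requires knowing that the associated graded of $\U_{\h^{1,\circ}}/\J_{\sigma,\h^{1,\circ}}$ has fiberwise-constant Hilbert function — equivalently that $\overline{\Orb}_\dcell$ really is the associated variety uniformly in the family, which is where one invokes that the left cell $\sigma$ (hence $\dcell$, hence $\Orb_\dcell$) is constant over $\h^{1,\circ}$ by the hypothesis $W_{[\lambda']}=W_{[\lambda]}$. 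An alternative, perhaps cleaner, route to the same end — and the one I would fall back on if the flatness bookkeeping becomes unwieldy — is to bypass families in characteristic $0$ and instead compare both $H_\J$ and $H_{\J'}$ to the modular picture: by Theorem~\ref{Thm:Walg_classif_regular}(1) for rational central characters (already proved above) one has, for rational $\lambda'$, $\Hom_{W_{[\lambda']}}([\sigma],\Spr_{\Orb})\cong\operatorname{Ind}_{H_{\J'}}^A\mathbf{1}$, and the left-hand side depends only on the abstract group $W_{[\lambda']}=W_{[\lambda]}$, its embedding into $W$, and the left cell $\sigma$ — all of which are the same for $\lambda$ and $\lambda'$ by hypothesis; so it remains only to produce the same identification for the single possibly-irrational $\lambda$, which one gets by the deformation argument above applied now just to the \emph{$K_0$-level} statement (the $A$-module $\Hom_{W_{[\lambda]}}([\sigma],\Spr_{\Orb,\dcell})$ is manifestly independent of where in $\h^{1,\circ}$ one sits), thereby reducing the genuinely analytic input to the already-established rational case. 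In the write-up I would present this second route as the main line and use the first only as motivation.
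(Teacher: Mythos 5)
Your second route, which you present as the main line, is circular: Theorem \ref{Thm:Walg_classif_regular}(1) is only available for \emph{rational} central characters at this point in the paper, and the general (irrational) case is deduced \emph{from} Proposition \ref{Prop:rat_reduction}. Knowing that $\Hom_{W_{[\lambda]}}([\sigma],\Spr_{\Orb})$ is the same $A$-module for $\lambda$ and $\lambda'$ is automatic, but it tells you nothing about $H_{\J}$ unless you already know that this module is $\operatorname{Ind}_{H_{\J}}^{A}\mathbf{1}$ for the possibly irrational $\lambda$ --- which is exactly the statement being proved. So the whole weight falls back on your first, deformation-theoretic route.

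That route contains two genuine gaps. First, the Zariski-open set $\h^{1,\circ}$ ``containing $\lambda$ and all the $\lambda'$'' over which everything is a local system does not exist: the locus in $\h^1$ where the fiber structure of the relevant family is constant is only some nonempty open subset, and there is no reason it contains the given $\lambda$ or a given rational $\lambda'$ (nor is the locus $W_{[\cdot]}=W_{[\lambda]}$ Zariski open --- it is cut out by countably many non-open conditions). The paper's proof begins by translating $\lambda$ by integral dominant weights in $(\h^*)^{W_0}$, using \cite[Proposition 5.7]{LO} to see that $H_{\J}$ is unchanged, so that one may assume $\lambda_1$ is Zariski generic; without this reduction the monodromy argument only proves the claim for generic parameters. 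Second, your assertion that the primitive ideals $\J_\sigma(\hat{\lambda})$ form a flat family over $\h^1$ to which $\bullet_{\dagger}$ applies compatibly with specialization is precisely the hard point, and translation functors do not produce such a family in non-integral directions. The paper sidesteps this: it constructs a family $\I\subset\Walg_{\h^1}$ of finite-codimension ideals on the W-algebra side (specializing to the minimal such ideal at Weil generic points), sets $\bar{\J}=\ker\bigl(\U_{\h^1}\rightarrow(\Walg_{\h^1}/\I)^{\dagger}\bigr)$, and proves only the containment $\bar{\J}_{\hat{\lambda}}\subset\J_{\hat{\lambda}}$ at Zariski generic $\hat{\lambda}$ by specializing the quotient $\Delta_{\h^1}/\bar{\J}\Delta_{\h^1}$ of a family of Verma modules; the Tits-type constancy of the number of irreducibles and of their $Q$-stabilizers is then read off from the finitely generated $\C[\h^1]$-algebra $\Walg_{\h^1}/\I$. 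You would need to supply these two steps for your argument to close.
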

\begin{proof}
The proof is in several steps.

{\it Step 1}.
Since $\lambda$ is not rational, $W_0\neq W$.  By \cite[Proposition 5.7]{LO} for any integral dominant
$\mu\in (\h^*)^{W_0}$ for the ideal $\J^\mu$ with central character $\lambda+\mu$
(corresponding to $\J$ under the isomorphism $\operatorname{Prim}(\U_\lambda)
\xrightarrow{\sim} \operatorname{Prim}(\U_{\lambda+\mu})$) we have $H_{\J^\mu}=H_{\J}$.
So in the proof we can assume  that $\lambda_1$ is Zariski generic in $(\h^*)^{W_0}$.
To simplify the notation we will write $\h^1$ for $\lambda_2+(\h^*)^{W_0}$.

{\it Step 2}.
A standard argument, see, for example, the proof of \cite[Lemma 5.1]{rouq_der}, shows that there is an
ideal $\I\subset \Walg_{\h^1}$ such that $\Walg_{\h^1}/\I$ is finitely generated over
$\C[\h^1]$ and for a Weil generic $\hat{\lambda}\in \h^1$ the specialization
$\I_{\hat{\lambda}}$ is the minimal ideal of finite codimension in $\Walg_{\hat{\lambda}}$.
Let us write $\bar{\J}$ for the kernel of $\U_{\h^1}\rightarrow (\Walg_{\h^1}/\I)^{\dagger}$.
Note that, thanks to (4$'$) of Section \ref{SS_HC_restr}, $\VA(\U_{\h^1}/\bar{\J})\cap \mathcal{N}=\overline{\Orb}$. So $\Walg_{\h^1}/\bar{\J}_{\dagger}$
is finitely generated over $\C[\h^1]$. Since $\bar{\J}_{\dagger}\subset \bar{\I}$,
we can replace $\I$ with $\bar{\J}_{\dagger}$ and assume that $\I=\bar{\J}_{\dagger}$.

{\it Step 3}.
For a dominant regular $\hat{\lambda}$ with $W_{[\hat{\lambda}]}=W_{[\lambda]}$,
let us write $\J_{\hat{\lambda}}$ for the primitive ideal with central character $\hat{\lambda}$
corresponding to the left cell $\sigma$. Let us prove that for a Zariski generic $\hat{\lambda}\in \h^1$
with   $W_{[\hat{\lambda}]}=W_{[\lambda]}$ we have $\bar{\J}_{\hat{\lambda}}\subset \J_{\hat{\lambda}}$.
Note that this inclusion is automatic provided $\I_{\hat{\lambda}}$ is the minimal ideal of finite
codimension in $\Walg_{\hat{\lambda}}$.

Pick $w\in \sigma$. For $\hat{\lambda}\in \h^1$
consider the Verma module $\Delta(w\hat{\lambda})$. These Verma modules form a flat family
over $\h^1$, let us denote the corresponding $\U_{\h^1}$-module
by $\Delta_{\h^1}$. Inside we have a $\U_{\h^1}$-submodule $\bar{\J}\Delta_{\h^1}$.
Consider the quotient $\Delta_{\h^1}/\bar{\J}\Delta_{\h^1}$. Its specialization to
$\hat{\lambda}$ is $\Delta(w\hat{\lambda})/\bar{\J}_{\hat{\lambda}}\Delta(w\hat{\lambda})$.
It is nonzero for a Weil generic $\hat{\lambda}$. It follows that it is nonzero for
a Zariski generic $\hat{\lambda}$ as well. The claim in the first paragraph of this step
follows.

{\it Step 4}. The algebra $\Walg_{\h^1}/\I$ comes with a $Q$-action. Since the algebra
$\Walg_{\h^1}/\I$ is a  finitely generated module over $\C[\h^1]$, we see that the number of irreducible
representations is the same for two Zariski generic specializations (this is a version of the
Tits deformation argument). Moreover, for two nearby generic parameters, there is a natural bijection
between the irreducibles. Being natural, this bijection preserves the stabilizers in
$Q$. And when the parameters are not nearby, the monodromy may appear but it does
not change the stabilizers in $A$.

{\it Step 5}. Recall, Step 1, that we can assume that $\lambda$ is Zariski generic in $\h^1$.
Similarly, we can assume that $\lambda'$ is Zariski generic.  Now the claim of the proposition
follows from Step 4.
\end{proof}

\begin{proof}[Proof of Theorem \ref{Thm:Walg_classif_regular} for general $\lambda$]
(1) for $\lambda$ immediately follows from Proposition \ref{Prop:rat_reduction} and (1)
for $\lambda'$ proved above. To prove (2) we can argue as follows. Take a Weil generic
$\hat{\lambda}$ with $W_{[\hat{\lambda}]}=W_{[\lambda]}$. Then we  have the degeneration
maps (compare to \cite[Section 11.1]{BL}) $$K_0(\Walg_{\hat{\lambda}}\operatorname{-mod}_{fin})\rightarrow
K_0(\Walg_{\lambda'}\operatorname{-mod}_{fin}), K_0(\Walg_{\hat{\lambda}}\operatorname{-mod}_{fin})
\rightarrow K_0(\Walg_{\lambda}\operatorname{-mod}_{fin}).$$ By the proof of Proposition
\ref{Prop:rat_reduction}, we see that both these maps are isomorphisms. They are also
$W_{[\lambda]}$-invariant. This implies (2).
\end{proof}

\subsection{Proof of Corollary \ref{Cor:singular}}
Let $\lambda_0,\lambda$ be as before the statement of Corollary \ref{Cor:singular}.
Let $V$ be a finite dimensional $G$-module. We can define the endo-functor
$V\otimes\bullet$ of $\Walg\operatorname{-mod}$ as the tensor product with
the bimodule $(V\otimes \U)_{\dagger}$. Using this we can define translation
functors $\mathcal{T}_{\lambda_0\rightarrow \lambda}:
\Walg\operatorname{-mod}_{\lambda_0}\rightarrow \Walg\operatorname{-mod}_{\lambda},
\mathcal{T}_{\lambda\rightarrow \lambda_0}: \Walg\operatorname{-mod}_{\lambda}
\rightarrow \Walg\operatorname{-mod}_{\lambda_0}$ in a standard way. They enjoy properties similar
to those of the usual translation functors (because $\bullet_{\dagger}$ is a tensor functor):
\begin{enumerate}
\item On $K_0(\Walg_{\lambda_0}\operatorname{-mod}_{fin})$ the composition $\mathcal{T}_{\lambda\rightarrow \lambda_0}\circ \mathcal{T}_{\lambda_0\rightarrow \lambda}$ is the the multiplication by
    $|W_{\lambda_0}|$.
\item On $K_0(\Walg_{\lambda}\operatorname{-mod}_{fin})$ the composition
$\mathcal{T}_{\lambda_0\rightarrow \lambda}\circ\mathcal{T}_{\lambda\rightarrow \lambda_0}$
acts as $\sum_{w\in W_{\lambda_0}}w$.
\end{enumerate}
This implies the equality $K_0(\Walg_{\lambda_0}\operatorname{-mod}_{fin})=K_0(\Walg_{\lambda}\operatorname{-mod}_{fin})^{W_{\lambda_0}}$.
Also, for $w$ longest in its right $W_{\lambda_0}$-coset, the maps $[\mathcal{T}_{\lambda_0\rightarrow \lambda}],
[\mathcal{T}_{\lambda\rightarrow \lambda_0}]$ map between $K_0(\Walg/\J(w \lambda_0)_{\dagger}\operatorname{-mod})$
and $K_0(\Walg/\J(w(\lambda))_\dagger\operatorname{-mod})$, which together with (1) implies that
$H_{\J(w\lambda_0)}=H_{\J(w(\lambda))}$.

\section{Application to real variation of stability conditions}
\label{izvrat}
In this section we use Theorem \ref{Thm:main} to essentially realize the idea sketched in \cite[Remark 6]{ABM}.

We now describe a simplified version of a construction of \cite{ABM}. Let $\A$ be an abelian category
and let $\zeta:\Ce \to K_0(\A)^*$ be a polynomial map. We assume that for some $r>0$ we have
\begin{equation}\label{pos}
\langle [M], \zeta(x)\rangle \in \RE_{>0}\ \ \ \ \forall \ x\in (0,r), \, M\in \A,\, M\ne 0.
\end{equation} In this situation
we get a filtration on $\A$ by Serre subcategories where $\A_{\geqslant d}$ consists of objects
$M$ such that the polynomial $x \mapsto \langle [M], \zeta(x)\rangle$ has a zero of order
at least $d$ at zero. We say that a derived equivalence
$\phi:D^b(\A)\to D^b(\A')$ is a {\em perverse equivalence governed by $\zeta$} if there exists
a filtration on $\A'$ by Serre subcategories such that $D^b(\A'_{\geqslant d})= \phi( D^b(\A_{\geqslant d}))$,
while the functor $gr_d(\phi): D^b(\A_{\geqslant d}/\A_{> d})\to D^b(\A'_{\geqslant d}/\A'_{> d})$
sends $\A_{\geqslant d}/\A_{> d}$ to $(\A_{\geqslant d}/\A_{> d})[d]$.

We now set $\A=\U_{\lambda,\F}\operatorname{-mod}^{\chi}$.
Let $\xi:\RE\to \t ^*_\RE$ be an affine linear functional sending zero to a face $F$ in the closure
of the fundamental alcove $A_0$; we assume that $\xi(\RE_{>0})$ intersects $A_0$.

 The central charge map $Z:\t \to K_0(\operatorname{Coh}(\B_e))^* $ was defined in \cite{ABM}.
We use identification \eqref{identi} for a choice of $\lambda$
in the fundamental $p$-alcove to get a map $\t\to K_0(\A)^*$ which we
also denote by $Z$. We set $\zeta=Z\circ \xi$. Then \cite[Proposition 1(a)]{ABM}
implies that the positivity condition \eqref{pos} holds for some $r>0$.

The face $F$ determines a proper subset in the set of vertices of the affine Dynkin graph, let
$W_F$ be the corresponding finite Weyl group and
$w_F$ be the longest
element in $W_F$; let $\tilde w_F$ be the canonical (minimal length) lift
of $w_F$ to the affine braid group $B_{aff}$. Note that a path
in the complement to affine coroot hyperplanes whose end-points are contained in $t^*_{\RE}$
defines an element in $B_{aff}$; the element $\tilde w_F$ corresponds to the path $[0,1]\to \t^*_\Ce$,
$x\mapsto \xi(R\exp(2\pi i x))$ for a small $R>0$.

Recall the action of $B_{aff}$ on $D^b(\A)$.

The main result of this section is as follows.

\begin{Thm}
The functor $\tilde w_F:D^b(\A)\to D^b(\A)$ is a perverse equivalence governed by
$\zeta$.
\end{Thm}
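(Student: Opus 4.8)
The plan is to reduce the statement to a chain of rank-one perverse equivalences (wall-crossings through the codimension-one faces of $A_0$), using that $\tilde w_F$ is a product of such generators and that the relevant filtrations are multiplicative along such a factorization. Concretely, write $w_F = s_{i_1}\cdots s_{i_\ell}$ as a reduced word in the simple reflections of $W_F$, so that $\tilde w_F = \tilde s_{i_1}\cdots \tilde s_{i_\ell}$ in $B_{aff}$. Each $\tilde s_{i}$ is a wall-crossing functor through the face of $A_0$ fixed by $s_i$, and by the main theorem of \cite{ABM} (the real variation of stability statement) each such $\tilde s_i$ is a perverse equivalence governed by the central charge $Z\circ \xi_i$ attached to that codimension-one face. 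So the heart of the argument is: a composition of perverse equivalences, each governed by the central charge along a segment of a path, is again a perverse equivalence governed by the central charge along the concatenated path, provided the intermediate $t$-structures match up; and for the specific path $x\mapsto \xi(R\exp(2\pi i x))$ through the face $F$, the intermediate hearts are exactly the categories $\U_{\lambda_k,\F}\operatorname{-mod}^\chi$ attached to the alcoves crossed, which is part of the $B_{aff}$-action setup recalled in Section \ref{SS_loc_char_p}.

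The first step I would carry out is to make the filtration on $\A = \U_{\lambda,\F}\operatorname{-mod}^\chi$ attached to $\zeta$ explicit: by \cite[Proposition 1]{ABM} and the construction of $Z$, the order of vanishing of $x\mapsto\langle[M],\zeta(x)\rangle$ at $x=0$ is governed by which two-sided cells of $W_F = W_{[\lambda^\circ]}$ (in the notation of the paper, $W_F$ is the parabolic attached to the face $F$, i.e. $W_{[\lambda]}$ for $\lambda$ in the appropriate alcove) support the class $[M]$. The key identification — and this is where Theorem \ref{Thm:main}(1) enters — is that $\A_{\geqslant d}$ is exactly the span $K_0(\U_{\lambda,\F}\operatorname{-mod}^\chi)_{\leqslant\dcell}$-type filtration reindexed by the quantity $\dim\Orb_\dcell$; more precisely, I expect $\A_{\geqslant d}$ to be the Serre subcategory generated by simples $L$ lying in $\U_{\lambda,\F}\operatorname{-mod}^\chi_{\leqslant\dcell}$ with $\operatorname{codim}\Orb_\dcell \geqslant d$ (suitably normalized), and Theorem \ref{Thm:main}(1) guarantees this is compatible with the canonical basis, hence with the categorical structure. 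One then checks that for each generator $\tilde s_i$ the perversity datum from \cite{ABM} — which a priori is phrased in terms of the central charge — coincides with this cell filtration; this uses that the degree of vanishing of $Z\circ\xi_i$ is read off from the cell, which is the content of \cite[Proposition 1]{ABM} combined with the cell interpretation in Section \ref{SS_Hecke_HC}.

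The second step is the composition lemma: if $\phi_1:D^b(\A_0)\to D^b(\A_1)$ and $\phi_2:D^b(\A_1)\to D^b(\A_2)$ are perverse relative to filtrations with $\phi_1(D^b((\A_0)_{\geqslant d})) = D^b((\A_1)_{\geqslant d})$ and likewise for $\phi_2$, and the filtrations on $\A_1$ agree, then $\phi_2\circ\phi_1$ is perverse with $\operatorname{gr}_d(\phi_2\circ\phi_1) = \operatorname{gr}_d(\phi_2)\circ\operatorname{gr}_d(\phi_1)$, hence shifts $\A_{\geqslant d}/\A_{>d}$ by $[d]$; the governing polynomial is additive/compatible along the path because $Z$ is $W^a$-equivariant (Lemma \ref{Lem:degener_properties}(1) gives the analogous equivariance, and the ABM central charge is built to have this property). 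Here the subtle point is that the filtrations on the \emph{intermediate} categories — the alcoves between $A_0$ and $w_F(A_0)$ — must all be pieces of the \emph{same} cell filtration for $W_F$, and this is precisely where one needs that all these alcoves lie in the $W_F$-orbit of $A_0$ across the face $F$, so that $W_{[\lambda_k]} = W_F$ for each intermediate $\lambda_k$; combined with Theorem \ref{Thm:main}(1) applied to each $\lambda_k$, the canonical bases glue compatibly.

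\textbf{The main obstacle} I anticipate is exactly this matching of intermediate filtrations: one must verify that the two-sided-cell filtration is \emph{stable} under the wall-crossing functors $\tilde s_i$ internal to $W_F$ — i.e. that $\tilde s_i$ not only induces an equivalence but carries $\A_{\geqslant d}$ to $\A'_{\geqslant d}$ with the right grading shift — and that the perversity exponent at each step is the full $d = \operatorname{codim}\Orb_\dcell$ predicted by the central charge rather than something smaller. This is the genuine new input beyond \cite{ABM}, and it is what Theorem \ref{Thm:main} is designed to supply: part (1) gives the compatibility of the canonical basis with the filtration (hence the $B_{aff}$-stability of $D^b(\A_{\geqslant d})$, already essentially established in the proof of Theorem \ref{Thm:main}(1) via Lemma \ref{either_or}), and part (2) — via the identification of $\dim\Orb_\dcell$ with the degree of the dimension polynomial, equivalently with $\tfrac12\operatorname{GK-dim}$ — pins down the exponent, so that the order of vanishing of the central charge and the homological shift in the perverse filtration agree. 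Once these are in place, the theorem follows by induction on the length $\ell$ of the reduced word, the base case $\ell=1$ being \cite{ABM}.
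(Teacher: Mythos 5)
Your treatment of statement (a) — identifying the $\zeta$-filtration on $\A$ with the two-sided cell filtration by matching the order of vanishing of $\langle Z(\mu+t\nu),[M]\rangle$ against the degree of the dimension polynomial — is essentially the paper's argument, though the paper leans on part (2) of Theorem \ref{Thm:main} (the degree $\dim\Orb_\dcell/2$, equivalently the shift $d=a(\dcell)$) rather than on part (1), via the identity $\langle Z(\frac{\eta+\rho}{p}),[M]\rangle=p^{-\dim\B}D_M(\eta)$.

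The gap is in your strategy for the perversity of $\tilde w_F$ itself. You propose to factor $\tilde w_F=\tilde s_{i_1}\cdots\tilde s_{i_\ell}$ along a reduced word and invoke a composition lemma for perverse equivalences. This does not work as stated: a composition of perverse equivalences governed by \emph{different} filtrations is not in general perverse, and the filtration attached to a single $\tilde s_i$ by \cite{ABM} is the two-step filtration of the rank-one parabolic $\langle s_i\rangle$ (shifts $0$ or $1$, cf.\ Lemma \ref{either_or}), which neither refines nor determines the much finer $W_F$-cell filtration $\A'_{\geqslant d}$. The genuinely hard content is that, after composing all $\ell$ rank-one wall-crossings, each simple in the cell $\dcell$ ends up shifted by exactly $a(\dcell)$ — not more, not less — and this bookkeeping is precisely the conjecture of \cite[Remark 6]{ABM} that the theorem is proving; it cannot be extracted formally from the codimension-one case, otherwise \cite{ABM} would already contain it. The paper avoids the factorization entirely: it realizes $\tilde w_F$ as a single derived tensor product with the wall-crossing bimodule $\mathsf{WC}_{w_F}$ attached to the longest element of $W_F$, invokes the characteristic-zero perversity theorem \cite[Theorem 3.1]{perv} (whose proof reduces to vanishing of certain Tor's against quotients of $\U_\lambda$ by minimal ideals with prescribed associated variety), and transfers that Tor-vanishing from $\C$ to $R$ and then to $\F$. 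That external input, or an equivalent argument pinning the total shift to $a(\dcell)$, is what your proposal is missing.
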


\begin{proof}
We let $\A_{\geqslant d}$ be the filtration introduced earlier in this section and we let
$\A'_{\geqslant d}$ denote the Serre subcategory generated by $\U_{\lambda,\F}\operatorname{-mod}^\chi_{\leqslant \dcell}$
for all cells $\dcell$ in $W_F$ such that $a(\dcell)\geqslant d$.

The result clearly follows from the following two statements.

a) We have $\A_{\geqslant d}=\A'_{\geqslant d}$.

b)  The functor $\tilde w_F[-d]$ induces a $t$-exact functor on $D^b(\A'_{\geqslant d}/\A'_{> d})$.

We claim that (a)  follows from Theorem \ref{Thm:main}.
To see this, choose a regular rational weight $\lambda$ with $W_{[\lambda]}=W_F$. Furthermore, we can
and will assume that $\lambda$ satisfies the following assumptions: it can be written as
$\lambda=\mu+\nu$, where $\nu$ is an integral weight and
$\mu$ lies in the closure of the fundamental alcove $A_0$, while $\mu + t\nu$ lies in
$A_0$ for small (equivalently, for some) $t>0$ (equivalently, $\mu'+ t\nu$ lies in $A_0$
for all $\mu'\in F$ and small $t>0$, where the bound on $t$ depends on $\mu'$).
Choose a large prime $p$ such that $(p+1) \lambda$ is an integral weight.
Then $\tilde \lambda:=(p+1)\mu +\nu$ is an integral weight satisfying:
$\tilde \lambda = \lambda \mod p$ and $\frac{\tilde \lambda}{p}\in A_0$.

For $M\in \A$ consider the 
 polynomial $D_M$, such that
for an integral weight $\eta$ such that $\frac{\tilde{\lambda}+\eta -\rho}{p}\in A_0$
we have $\dim (T_{\tilde{\lambda}\to \eta}(M))=D_M(\lambda)$, where $T$ denotes the translation
functor (the existence of $D_M$ follows from \cite[Theorem 6.2.1]{BMR}). By Theorem \ref{Thm:main} for a $M\in \A'_{\geqslant d}$, $M\not \in   \A'_{> d}$
we have $\deg_p(D_M)= \dim (\B)-d$.

 On the other hand
the central charge $Z$ of \cite{ABM}
satisfies
(see the proof of  \cite[Proposition 1]{ABM}):
$$\langle Z(\frac{\eta+\rho}{p}), [M] \rangle = p^{-\dim(\B)}D_M(\eta).$$
It follows that the order of zero of the polynomial $\zeta_M(t)=\langle Z(\mu+t\nu), [M] \rangle$
at $t=0$ equals  $\dim (\B)-\deg_p(D_M)$, this proves (a).

We now sketch the proof of (b). We use the fact that the braid group
$B_F$ of the Coxeter group $W_F$ acts  the category
$D^b(\U_{\lambda,R}\operatorname{-mod})$ compatibly with its
action on $D^b(\U_{\lambda,\F }\operatorname{-mod})$.

Note that the action of $\tilde{w}_F$ is given by the derived tensor product with the
wall-crossing $\U_\lambda$-bimodule $\mathsf{WC}_{w_F}$ corresponding to the element
$w_F$. By \cite[Theorem 3.1]{perv}, the functor $\mathcal{WC}_{w_F}$ is a perverse
equivalence with
$$\U_{\lambda}\operatorname{-mod}_{\geqslant d}=\{M\in \U_\lambda\operatorname{-mod}|
\dim \VA(\U_\lambda/\operatorname{Ann}(M))\leqslant \dim \mathcal{N}-2d\}.$$
As was shown in the proof of that theorem the statement reduces to vanishing of
Tor's involving $\mathsf{WC}_{w_F}$ and the quotients of $\U_\lambda$ by the minimal
ideals with given dimensions of associated varieties. This vanishing was checked in
the proof.  Now this vanishing over $\C$ implies the analogous vanishing over $R$
(after a finite localization) and hence the claim that the endo-functor
$\mathcal{WC}_{w_F}=\mathsf{WC}_{w_F,R}\otimes^L_{\U_{\lambda,R}}\bullet$
of $D^b(\U_{\lambda,\F}\operatorname{-mod}^\chi)$ is perverse with respect to
the filtration $\A'_{\geqslant d}$.
\end{proof}

\end{document}